\newif\ifanswers
\newtheorem{thm}{Theorem}[section]
\newtheorem{lem}[thm]{Lemma}
\newtheorem{prop}[thm]{Proposition}
\newtheorem{cor}[thm]{Corollary}
\newtheorem{rem}[thm]{Remark}
\newtheorem{notation}[thm]{Notation}
\newtheorem{definition}[thm]{Definition}
\newcommand{\Z}{{\mathbb Z}}
\newcommand{\R}{{\mathbb R}}
\newcommand{\T}{{\mathbb T}}
\newcommand{\sone}{{\mathcal{S}^{1}}}
\newcommand{\C}{\mathbb{C}}
\newcommand{\Pc}{\mathcal{P}}
\newcommand{\Ac}{\mathcal{A}}
\newcommand{\Sc}{\mathcal{S}}
\newcommand{\Fc}{\mathcal{F}}
\newcommand{\Mcc}{\mathcal{M}}
\newcommand{\Dc}{\mathcal{D}}
\newcommand{\TT}{\mathbb{T}}
\title[Lattice points on circles]{On probability measures arising from
  lattice points on circles}
\author{P\"ar Kurlberg}
\urladdr{www.math.kth.se/\~{ }kurlberg}
\address{Department of Mathematics, KTH Royal Institute of Technology,
SE-100 44 Stockholm, Sweden}
\email{kurlberg@math.kth.se}
\author{Igor Wigman}
\address{Department of Mathematics, King's College London, UK}
 \email{igor.wigman@kcl.ac.uk}
\begin{document}
\begin{abstract}
  A circle, centered at the origin and with radius chosen so that it
  has non-empty intersection with the integer lattice $\Z^{2}$,
  gives rise to a probability measure on the unit circle in a natural
  way.
  Such measures, and their weak limits, are said to be {\em attainable}
  from lattice points on circles.

  We investigate the set of attainable measures and show that it
  contains all extreme points, in the sense of convex geometry, of
  the set of all probability measures that are invariant under some
  natural symmetries.
  Further, the set of attainable measures is closed under convolution,
  yet there exist symmetric probability measures that are {\em not}
  attainable.
  To show this, we study the geometry of projections onto a finite
  number of Fourier coefficients and find that the set of attainable
  measures has many singularities with a ``fractal'' structure.  This
  complicated structure in some sense arises from prime powers ---
  singularities do not occur for circles of radius $\sqrt{n}$ if $n$
  is {\em square free}.


\end{abstract}

\date{\today}
\maketitle

\section{Introduction}
\label{sec:introduction}


Let $S$ be the set of nonzero integers expressible as a sum of two
integer squares. For $n\in S$, let
\begin{equation*}
\Lambda_{n} := \{ \vec{\lambda}= a+bi \in\Z[i]:\: a^{2}+b^{2}=n  \}
\end{equation*}
denote the intersection of the lattice $\Z[i] \subset \C$ with a
circle centered at the origin and of radius $\sqrt{n}$.
For $n \in S$, let $r_{2}(n) := |\Lambda_{n} |$ denote the cardinality
of $\Lambda_{n}$; for $n \not \in S$ it is convenient to define
$r_{2}(n)=0$.
We define a probability measure $\mu_{n}$ on the unit circle
$$
\mathcal{S}^{1} := \{ z \in \C : |z|=1 \}
$$
by letting
\begin{equation*}
\mu_{n} := \frac{1}{r_{2}(n)}\sum\limits_{\vec{\lambda}\in
  \Lambda_{n}}\delta_{\vec{\lambda}/\sqrt{n}},
\end{equation*}
where $\delta_{z}$ denotes the Dirac delta function with support at
$z$.  The measures $\mu_{n}$ are clearly invariant under
multiplication by $i$ and under complex conjugation.
We say that a
measure on $\mathcal{S}^{1}$ is {\em symmetric} if it is invariant
under these symmetries.

\begin{definition}
  A probability measure $\nu$ is said to be {\bf attainable from
    lattice points on circles}, or simply just {\bf attainable}, if
  $\nu$ is a weak limit point of the set $\{\mu_{n}\}_{n \in S}$.
\end{definition}
We note that any attainable measure is automatically symmetric.
Now, if two integers $m,n \in S$ are {co-prime},
\begin{equation}
\label{eq:mu(mn)=mu(m)*mu(n)}
\mu_{mn} = \mu_m \bigstar \mu_n,
\end{equation}
where $\bigstar$ denotes convolution on $\mathcal{S}^{1}$. Thus measures
$\mu_{n}$ for $n$ a prime power are of particular interest.  It
turns out that the closure of the set of measures given by $\mu_{p^e}$
for $p$ ranging over all primes $p \equiv 1 \mod 4$ and exponents $e$
ranging over integers $e \geq 1$ contains $\mu_{2^{k}}$, as well as
$\mu_{q^{2k}}$ for any prime $q \equiv 3 \mod 4$, and any exponent
$k \geq 0$.  (Note that $q^{l} \in S$ forces $l$ to be even.)

Motivated by the above,
we say that a measure $\mu$ is {\em prime power attainable} of $\mu$
is a weak limit point of the set $\{ \mu_{p^{e}} \}_{p \equiv 1 \text{ mod }
4, \, e \geq 1}$.  Similarly, we say that a measure $\mu$ is {\em
prime attainable} if $\mu$ is a weak limit point of the set $\{
\mu_{p} \}_{p \equiv 1 \text{ mod } 4} $.

\begin{prop}
\label{prop:A monoid}
The set of attainable measures is closed under convolution, and is
generated by the set of prime power attainable measures.

\end{prop}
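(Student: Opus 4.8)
The plan is to reduce both claims to the single assertion that $\mu_m\bigstar\mu_n$ is attainable for all $m,n\in S$, and then read off the conclusions. I will use three soft inputs. First, the set of attainable measures is weak-$*$ closed (it is the set of accumulation points of $\{\mu_n\}_{n\in S}$ in the compact metric space of probability measures on $\sone$) and contains every $\mu_n$, $n\in S$ (e.g.\ $\mu_n=\mu_{nq^2}$ for primes $q\equiv3\bmod4$, since $\mu_{q^2}$ is the uniform measure on $\{1,i,-1,-i\}$ and $\mu_n$ is already $i$-invariant). Second, convolution $\bigstar$ is jointly weak-$*$ continuous on $\sone$, this being a compact abelian group. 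Third, for a fixed exponent $e$, the measure $\mu_{p^e}$ depends continuously on the argument of a Gaussian prime $\pi_p$ above $p$: indeed $\Lambda_{p^e}/\sqrt{p^e}=\{\,i^m e^{i(2k-e)\arg\pi_p}:0\le m\le 3,\ 0\le k\le e\,\}$ with equal mass on each point, so varying $\arg\pi_p$ moves the atoms continuously without changing their number or weights.

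For the key assertion, factor $m=2^{a}\prod_{p\equiv 1(4)}p^{\alpha_p}\prod_{q\equiv 3(4)}q^{2\beta_q}$ and similarly $n$ with exponents $a',\alpha_p',\beta_q'$. Applying \eqref{eq:mu(mn)=mu(m)*mu(n)} inside $m$ and inside $n$,
\[
\mu_m\bigstar\mu_n=\big(\mu_{2^{a}}\bigstar\mu_{2^{a'}}\big)\,\bigstar\,\bigstar_{p\equiv 1(4)}\big(\mu_{p^{\alpha_p}}\bigstar\mu_{p^{\alpha_p'}}\big)\,\bigstar\,\bigstar_{q\equiv 3(4)}\big(\mu_{q^{2\beta_q}}\bigstar\mu_{q^{2\beta_q'}}\big).
\]
A direct computation shows that $\mu_{2^e}$ is the uniform measure on $\{1,i,-1,-i\}$ or on the primitive eighth roots of unity according to the parity of $e$, while $\mu_{q^{2k}}=\mu_{q^2}$ is the uniform measure on $\{1,i,-1,-i\}$ for every $q\equiv3\bmod4$; hence the $2$-part and all the $q\equiv3\bmod4$ parts convolve into a single measure $\mu_{N_0}$ with $N_0\in\{1,2,Q^2,2Q^2\}$ for a prime $Q\equiv3\bmod4$ that we choose coprime to $mn$. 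It remains to exhibit $\mu_{N_0}\bigstar\bigstar_{p\equiv 1(4)}(\mu_{p^{\alpha_p}}\bigstar\mu_{p^{\alpha_p'}})$ as attainable. Primes $p\equiv1\bmod4$ dividing only one of $m,n$ cause no trouble. For each prime $p\equiv1\bmod4$ dividing both, invoke the equidistribution of Gaussian primes in angular sectors (Hecke) to choose primes $q_p^{(j)}\equiv1\bmod4$, all distinct, distinct from $Q$ and from every prime above, with $\arg\pi_{q_p^{(j)}}\to\arg\pi_p$; by the third input, $\mu_{(q_p^{(j)})^{\alpha_p'}}\to\mu_{p^{\alpha_p'}}$ weakly. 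Let $m_j$ be $N_0$ times $\prod p^{\alpha_p}$ over all $p\equiv1(4)$ dividing $m$, times $\prod(q_p^{(j)})^{\alpha_p'}$ over $p\equiv1(4)$ dividing both, times $\prod p^{\alpha_p'}$ over $p\equiv1(4)$ dividing $n$ but not $m$. These are pairwise coprime prime powers in $S$, so by \eqref{eq:mu(mn)=mu(m)*mu(n)} $\mu_{m_j}$ is literally their convolution, whence $\mu_{m_j}\to\mu_m\bigstar\mu_n$ weakly; each $\mu_{m_j}$ is attainable and the attainable set is closed, so $\mu_m\bigstar\mu_n$ is attainable.

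Both conclusions now follow formally. If $\nu_i$ is a weak limit of $\mu_{n_j^{(i)}}$ for $i=1,2$, then $\mu_{n_j^{(1)}}\bigstar\mu_{n_j^{(2)}}\to\nu_1\bigstar\nu_2$ by joint continuity, each term is attainable by the key assertion, and closedness gives attainability of $\nu_1\bigstar\nu_2$; this is closure under convolution. For the generation statement, every prime power attainable measure is a weak accumulation point of a subfamily of $\{\mu_n\}_{n\in S}$, hence attainable; conversely, if $\nu=\lim_j\mu_{n_j}$ is attainable, then iterating \eqref{eq:mu(mn)=mu(m)*mu(n)} writes $\mu_{n_j}=\bigstar_{p\mid n_j}\mu_{p^{v_p(n_j)}}$ as a finite convolution of prime power attainable measures --- the factors with $p\equiv1\bmod4$ lie in the defining family, and those with $p=2$ or $p\equiv3\bmod4$ are prime power attainable by the facts recalled in the introduction --- so $\nu$ lies in the weak closure of the finite convolutions of prime power attainable measures. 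Together with closure under convolution, this identifies the attainable measures with the smallest weakly closed, convolution-closed set containing the prime power attainable measures.

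The only substantive point is the coprimality repair in the key assertion: one cannot in general take the approximants $n_j^{(i)}$ coprime, and the remedy --- perturbing each repeated prime $p\equiv1\bmod4$ to nearby Gaussian primes --- rests on the classical equidistribution of Gaussian primes in sectors; everything else is soft topology together with bookkeeping over the prime factorisation.
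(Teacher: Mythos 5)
Your proof is correct and uses essentially the same strategy as the paper: exploit the multiplicativity $\mu_{mn}=\mu_m\bigstar\mu_n$ for coprime $m,n$, and use Hecke's equidistribution of the angles $\theta_p$ to replace one approximating sequence by a nearby coprime one. You spell out more explicitly than the paper the mechanics of the coprime replacement (factoring into prime powers, collapsing the $2$-part and the $q\equiv3\bmod4$ parts, perturbing only the repeated split primes) and the generation-by-prime-powers claim, but the underlying ideas match.
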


Hence the set of attainable measures is the smallest closed w.r.t.
convolution set, containing all the prime power attainable measures.
The set of all symmetric probability measures is clearly a convex set,
hence equals the convex hull of its extreme points. Quite
interestingly, the set of prime attainable measures is exactly the set
of extreme points.
Now, since the set of attainable measures contains the extreme
points, and is closed under convolution one might wonder if {\em all}
symmetric probability measures are attainable?  By studying Fourier
coefficients of attainable measures we shall show that {\bf not all
symmetric measures are attainable}.

Given a measure $\mu$ on $\mathcal{S}$ and $k \in \Z$,
define the $k$-th Fourier coefficent of $\mu$ by
$$
\hat{\mu}(k) := \int_{\mathcal{S}} z^{-k} d\mu(z).
$$
If $\mu$ is symmetric it is straightforward to see that $\hat{\mu}(k)
= 0$ unless $4|k$.  Since $\mu$ is a probability measure,
$\hat{\mu}(0) = 1$, hence the first two informative Fourier
coefficients are $\hat{\mu}(4)$ and $\hat{\mu}(8)$; note that
$\hat{\mu}(-k) = \hat{\mu}(k)$ for all $k$ since $\mu$ is both real
and even (i.e. it is invariant under complex
conjugation).

\begin{thm}
\label{thm:muhat-region}
If $\mu$ is attainable and $|\hat{\mu}(4)|> 1/3$ then
\begin{equation}
\label{eq:mu-eight}
2\hat{\mu}(4)^{2}-1 \leq \hat{\mu}(8)
\leq \Mcc(\hat{\mu}(4)),
\end{equation}
where
\begin{equation}
\label{eq:max curve def}
\Mcc(x) = \max\left( x^{4}, (2|x|-1)^{2}   \right)
\end{equation}
denotes the ``max curve''.
Conversely, given $x,y$ such that $|x| \leq 1$ and $$2x^{2}-1 \leq y
\leq  \Mcc(x) ,$$ there exists an
attainable measure $\mu$ such that $( \hat{\mu}(4), \hat{\mu}(8)) = (x,y)$.
\end{thm}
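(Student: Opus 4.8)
Throughout I would pass to the push-forward $\nu:=\mu\circ(z\mapsto z^{4})^{-1}$ on $\mathcal{S}^{1}$; for symmetric $\mu$ this is a general conjugation-invariant probability measure, the push-forward takes convolution to convolution, and $\hat{\mu}(4)=\hat{\nu}(1)$, $\hat{\mu}(8)=\hat{\nu}(2)$. For a split prime $p\equiv 1\bmod 4$ put $\psi_{p}=4\arg(\alpha)$, $\alpha\in\Z[i]$ a Gaussian prime above $p$; the $4(e+1)$ lattice points on the circle of radius $\sqrt{p^{e}}$ have arguments $(2j-e)\arg(\alpha)+k\pi/2$, so a geometric sum gives
\[
\hat{\mu_{p^{e}}}(4)=\frac{1}{e+1}\cdot\frac{\sin((e+1)\psi_{p})}{\sin\psi_{p}},\qquad
\hat{\mu_{p^{e}}}(8)=\frac{1}{e+1}\cdot\frac{\sin(2(e+1)\psi_{p})}{\sin(2\psi_{p})}.
\]
Thus $\hat{\mu_{p}}(8)=2\hat{\mu_{p}}(4)^{2}-1$: prime measures lie exactly on the lower parabola in \eqref{eq:mu-eight}, and since the angles $\psi_{p}$ equidistribute (Hecke) the whole parabola $\{(t,2t^{2}-1):|t|\le1\}$ is attainable. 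Since $\mu_{2^{a}}$ and $\mu_{q^{2b}}$ ($q\equiv3\bmod4$) are rotations of the uniform measure on the fourth roots of unity, they only contribute a factor $(\pm1,1)$, so for every $n\in S$ the pair $(\hat{\mu_{n}}(4),\hat{\mu_{n}}(8))$ is, up to a sign in the first coordinate, a coordinate-wise product of points of the curves $\Gamma_{e}:=\{(\hat{\mu_{p^{e}}}(4),\hat{\mu_{p^{e}}}(8)):p\equiv1\bmod4\}$. Finally, the \emph{lower} inequality $\hat{\mu}(8)\ge2\hat{\mu}(4)^{2}-1$ holds for all symmetric probability measures: with $w=e^{it}$, $\hat{\nu}(2)=\int(2\cos^{2}t-1)\,d\nu\ge2\bigl(\int\cos t\,d\nu\bigr)^{2}-1$ by Cauchy--Schwarz. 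So only the upper bound and the converse need real work.

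For the converse I would realize the whole region using products of distinct split primes together with closedness of the attainable set. The point $(x,x^{4})$ is in the region for every $|x|\le1$ (since $(x^{2}-1)^{2}\ge0$), and it is attainable: for each $k$ pick distinct $p_{1},\dots,p_{k}\equiv1\bmod4$ with $\cos\psi_{p_{i}}$ within $k^{-2}$ of $\varepsilon_{i}|x|^{1/k}$ (signs $\varepsilon_{i}=1$ except $\varepsilon_{1}=\operatorname{sgn}x$); then $\hat{\mu_{p_{1}\cdots p_{k}}}(4)=\prod_{i}\cos\psi_{p_{i}}\to x$ and, the signs being irrelevant here, $\hat{\mu_{p_{1}\cdots p_{k}}}(8)=\prod_{i}(2\cos^{2}\psi_{p_{i}}-1)\to\lim_{k}(2|x|^{2/k}-1)^{k}=x^{4}$, so a weak limit of the $\mu_{p_{1}\cdots p_{k}}$ is attainable with the required coefficients. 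In the same way $(x,(2|x|-1)^{2})$ is attainable with just $k=2$ primes having $\cos\psi_{p_{i}}$ near $\pm|x|^{1/2}$. For a general $(x,y)$ in the region with $x\ne0$, the set $F_{k}(x):=\{\prod_{i=1}^{k}(2c_{i}^{2}-1):c_{i}\in[-1,1],\ \prod_{i}c_{i}=x\}$ is a closed interval containing $2x^{2}-1$ (take $c_{1}=x$, the rest $=1$), and by the two special cases above the $F_{k}(x)$ reach arbitrarily close to $\Mcc(x)$ from below; since they all contain $2x^{2}-1$ their union is an interval covering $[2x^{2}-1,\Mcc(x))$, and approximating the $c_{i}$ by $\cos\psi_{p_{i}}$ (Hecke) realizes each such $(x,y)$ in the closure of $\{\mu_{n}\}$, the endpoint $y=\Mcc(x)$ being added by closedness. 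The case $x=0$ is immediate: with $k=2$ and $c_{1}=0$ one already gets the full fiber $[-1,1]=[2\cdot0^{2}-1,\Mcc(0)]$.

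The upper bound $\hat{\mu}(8)\le\Mcc(\hat{\mu}(4))$ under $|\hat{\mu}(4)|>1/3$ is the heart of the matter and, I expect, the main obstacle. By Proposition~\ref{prop:A monoid} it is enough to treat a finite convolution $\mu=\bigstar_{i}\mu_{p_{i}^{e_{i}}}$, i.e.\ to show that for $(a_{i},b_{i})\in\Gamma_{e_{i}}$ the condition $|\prod_{i}a_{i}|>1/3$ forces $\prod_{i}b_{i}\le\Mcc(\prod_{i}a_{i})$. The trouble is that the region $\{2a^{2}-1\le b\le\Mcc(a)\}$ is \emph{not} stable under coordinate-wise products — it fails near $a=0$, where $b$ can be $-1$ and the product of two such values is $+1$ — so one cannot simply induct on the number of factors. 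The plan would be: (a) describe the $\Gamma_{e}$ explicitly — in the parameter $t=\cos\psi_{p}$ one has $a=U_{e}(t)/(e+1)$ and $b=a\,T_{e+1}(t)/t$ (Chebyshev polynomials $U_{e},T_{e}$), so $\Gamma_{1}$ is $b=2a^{2}-1$, $\Gamma_{2}$ is $b=3a^{2}-2a$ on $a\in[-1/3,1]$, and so on — and check that on each $\Gamma_{e}$ one already has $b\le\Mcc(a)$ once $|a|>1/3$; the left endpoint $(-1/3,1)$ of $\Gamma_{2}$, lying above $\Mcc(-1/3)=1/9$, is precisely the feature that forces the hypothesis $|\hat{\mu}(4)|>1/3$; and (b) extract a genuinely product-stable inequality obeyed by every point of $\bigcup_{e}\Gamma_{e}$ — together with a separate argument isolating the handful of factors that can make the product small — from which $\prod_{i}b_{i}\le\Mcc(\prod_{i}a_{i})$ follows whenever $|\prod_{i}a_{i}|>1/3$. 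Passing from finite convolutions to arbitrary attainable $\mu$ is then routine, since $\Mcc$ is continuous and $\{|\hat{\mu}(4)|>1/3\}$ is open. Step (b) — finding the correct product-stable surrogate for $\{b\le\Mcc(a)\}$ and pinning down why $1/3$ is exactly the right threshold — is the step I would expect to be hardest.
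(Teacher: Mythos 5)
Your setup, the lower bound via Cauchy--Schwarz, and the converse are all fine and essentially match the paper: the paper too proves the converse by first getting the curve $(x,x^{4})$ as the limit of $\gamma_{2}^{\star k}$ at scaled parameters (its Lemma \ref{lem:x^4 attainable}), obtains $(x,(2|x|-1)^{2})$ as a square of the parabola, and then tessellates the region $2x^{2}-1\le y\le \Mcc(x)$ by componentwise-multiplying these two boundary curves by the parabola $\{(t,2t^{2}-1)\}$. Your ``$F_{k}(x)$ is an interval'' phrasing is a legitimate restatement of that tessellation.

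The genuine gap is your step (b) for the upper bound, which you yourself flag as the hardest part and leave unresolved. You correctly observe that the region $\{2a^{2}-1\le b\le\Mcc(a)\}$ is not closed under componentwise products, so one cannot simply show each $\Gamma_{e}$ lies below $\Mcc$ and induct. What you are missing is the paper's dichotomy into \emph{totally positive} and \emph{mixed sign} representations of a product $(x,y)=\prod_{i}(x_{i},y_{i})$ of curve points.
\begin{itemize}
\item If every $y_{i}\ge 0$ (``totally positive''), then since $|x_{i}|\le 1$ and $|x|>1/3$, each $|x_{i}|>1/3$. One then needs only the single-curve inequality $y_{i}\le x_{i}^{4}$ on each $\gamma_{A_{i}}$ for $|x_{i}|>1/3$ and $y_i\ge 0$ (the paper's Proposition \ref{prop:prime curves x>1/3 below x^4}, proved via the monotonicity of $t\mapsto t^{3}\cos t/\sin^{3}t$). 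This inequality \emph{is} the product-stable surrogate you were looking for: $y=\prod y_{i}\le\prod x_{i}^{4}=x^{4}$.
\item If some $y_{i}<0$ (``mixed sign''), then either the full product $y\le 0$, in which case $(x,y)$ lies in the lens $B_{2}=\{2x^{2}-1\le y\le 0\}$ which sits under $\Mcc$; or at least \emph{two} factors have $y_{i}<0$, and the crucial Lemma \ref{eq:p1,p2 in B=> p1p2 in A} shows (by a Lagrange-multiplier computation) that the product of any two points of $B_{2}$ lands in $B_{1}=\{|x|\le 1/2,\,0\le y\le(2|x|-1)^{2}\}$. The box $B_{1}$ is \emph{absorbing} under componentwise multiplication by any point with $|x|\le 1,\,0\le y\le 1$, so the remaining factors keep the product inside $B_{1}$, giving $y\le(2|x|-1)^{2}$ without any hypothesis on $|x|$.
\end{itemize}
Combining the two cases yields $y\le\max(x^{4},(2|x|-1)^{2})=\Mcc(x)$ on the dense set of finite products, hence on its closure. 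Note that the $1/3$ threshold enters only through the totally positive branch; the mixed-sign bound holds for all $x$, which your plan did not anticipate. Without this split your outline cannot be completed: there is no single product-stable inequality sandwiched between $\{b\le\Mcc(a)\}$ and the $\Gamma_{e}$'s, precisely because of the behavior near $a=0$ that you point out.

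One smaller imprecision: your formula $\hat{\mu_{p^{e}}}(8)=\frac{1}{e+1}\sin\bigl(2(e+1)\psi_{p}\bigr)/\sin(2\psi_{p})$ is the Fej\'er kernel $G_{e+1}(2\psi_{p})$ and is correct, but your Chebyshev identity ``$b=a\,T_{e+1}(t)/t$'' does not hold (e.g.\ for $e=1$, $T_{2}(t)/t$ is not $2t-1/t$ times $a$ in the right way); this is cosmetic since you do not use it, but worth noting.
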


For comparison, we  note that the
Fourier coefficients of the full set of symmetric
probability measures has the following quite simple description (see section \ref{sec:Fourier image}
below):
$$
\{ (\hat{\mu}(4), \hat{\mu}(8)) : \mu \text{ is symmetric} \}
= \{ (x,y) : |x| \leq 1, \ 2x^{2}-1 \leq y \leq 1 \}.
$$
As Figure~\ref{fig:all-symmetric} illustrates, the discrepancy between
all symmetric measures and the attainable ones is fairly large.  In
particular, note that the curves $y=x^{4}$, $y = 2x^{2}-1$, and
$(2|x|-1)^{2}$ all have the {\em same tangent} at the two points $(\pm
1, 1)$, consequently the set of attainable measures has cusps near
$(\pm 1, 1)$. However, there are attainable
measures corresponding to points {\em above the red curve} for $|x|
\leq 1/3$.
\begin{figure}[h]
  \centering
\ifanswers
\includegraphics[width=6cm]{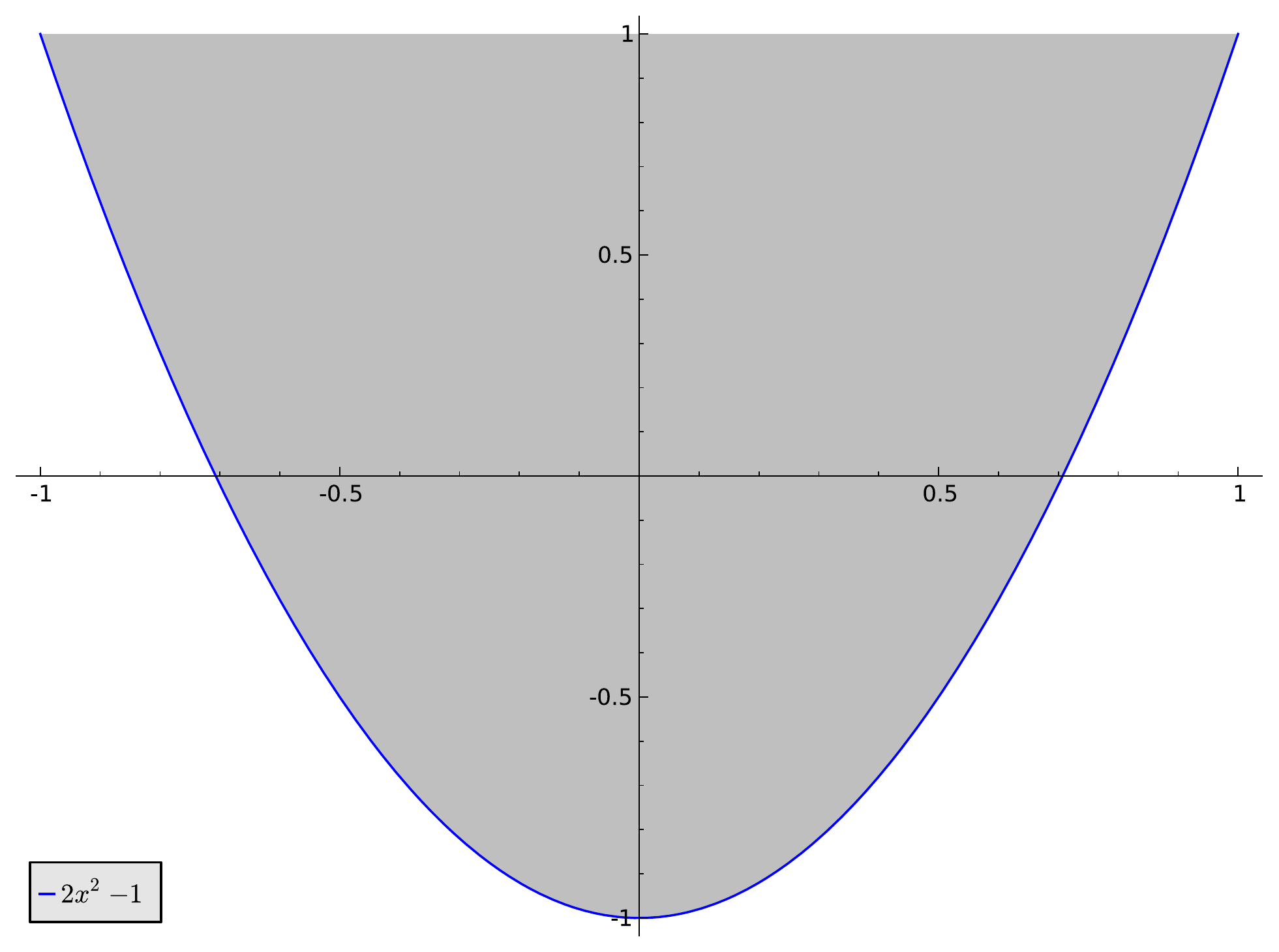}
\includegraphics[width=6cm]{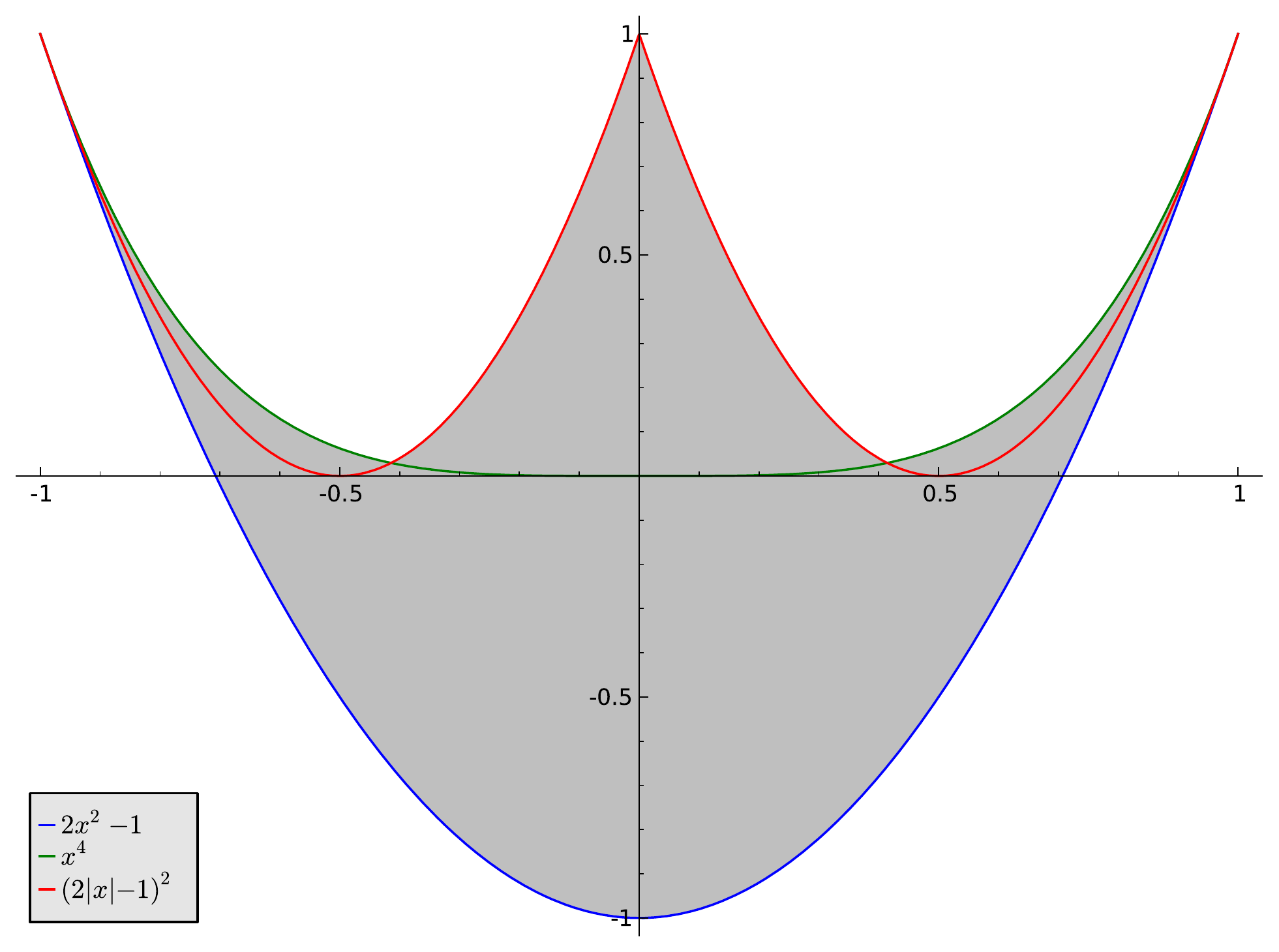}
\fi
\caption{Left: $\{ (\hat{\mu}(4), \hat{\mu}(8)) : \mu \text{ is
    symmetric}\}$.  Right: the region defined by the inequalities
  $2x^{2}-1 \leq y \leq \max \left(x^{4}, (2|x|-1)^{2} \right)$.  }
\label{fig:all-symmetric}
\end{figure}


\newpage{} To give an indication of the rate at which the admissible
region is ``filled out'', as well as illuminate what happens in the
region $|\hat{\mu}(4)| \leq 1/3$, we next present the results of some
numerical experiments in Figures~\ref{fig:int-attainable-up-to-1000}
and \ref{fig:int-attainable-up-to-10000}.
\begin{figure}[h]
\ifanswers
\includegraphics[width=6cm]{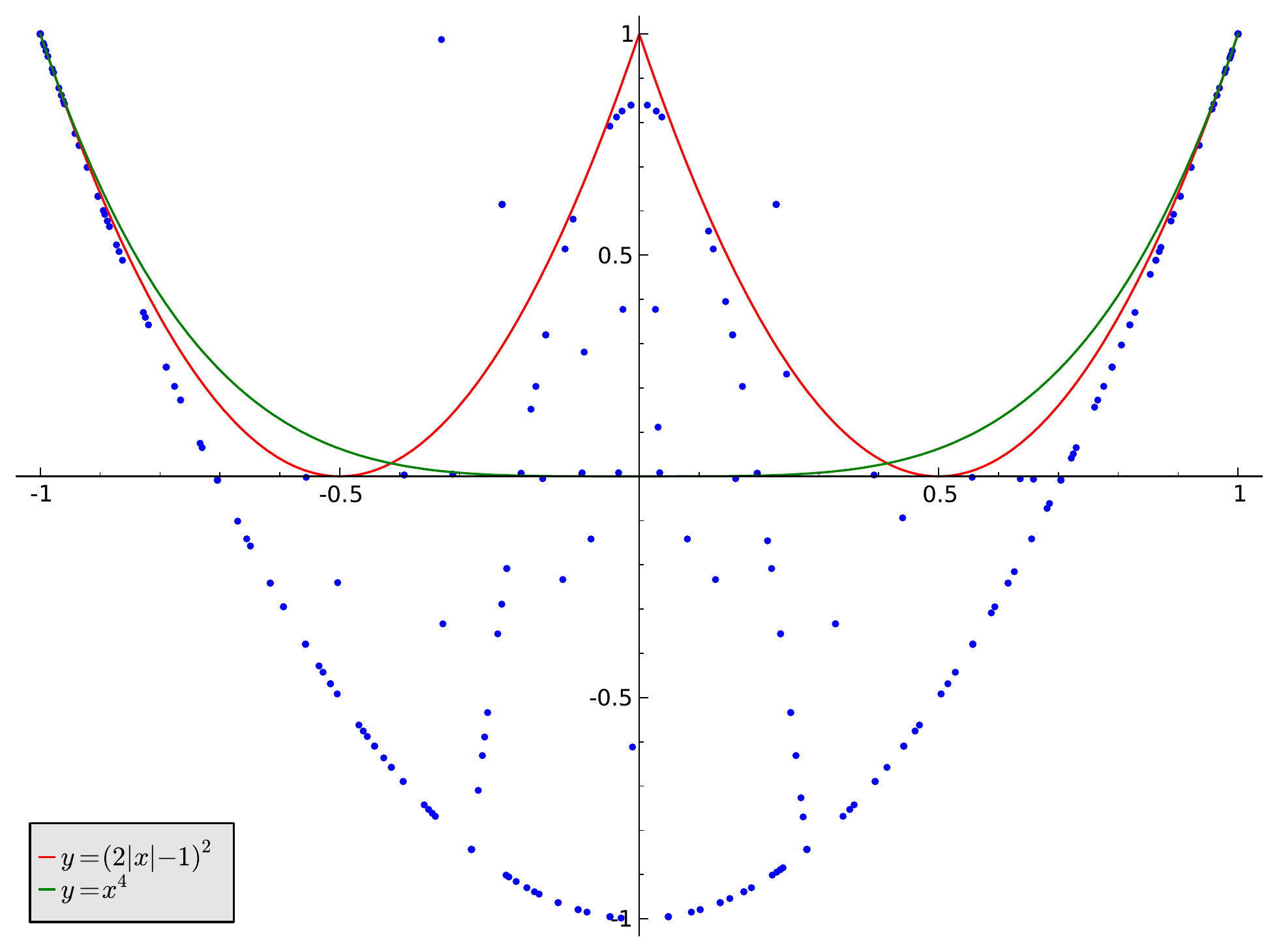}
\includegraphics[width=6cm]{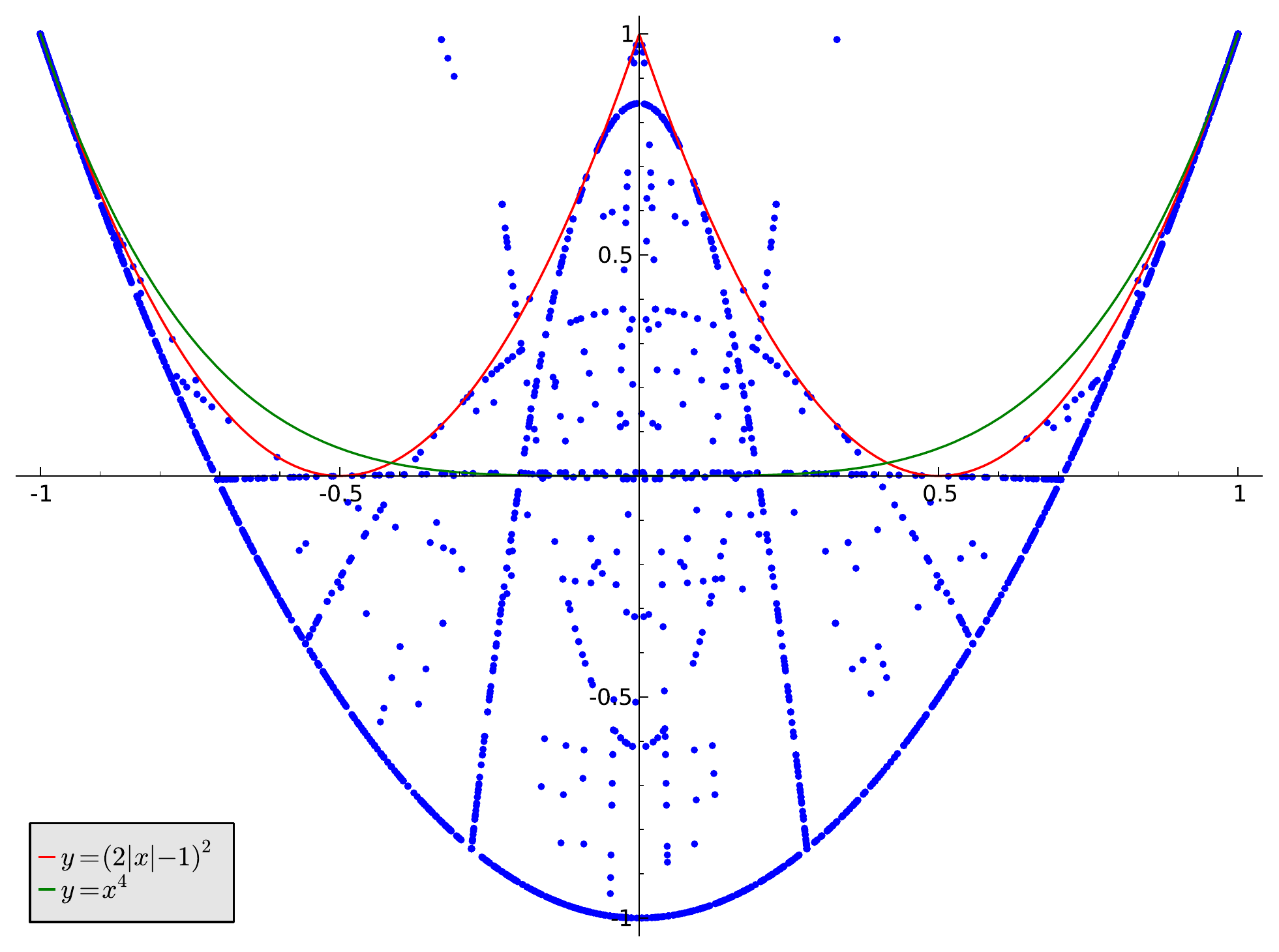}
\fi
\caption{Left: $(\hat{\mu_n}(4), \hat{\mu_n}(8))$ for $n\in S$,
$n\leq 1000$. Right: $(\hat{\mu_n}(4), \hat{\mu_n}(8))$ for $n \in S, n \leq 10000$.}
\label{fig:int-attainable-up-to-1000}
\end{figure}


Note that points lying clearly above the red curve, but below the
green one, are quite rare.  However,  ``spikes'' in the region
$|\hat{\mu}(n)| \leq 1/3$ are clearly present.
\begin{figure}[h]
\ifanswers
\includegraphics[width=6cm]{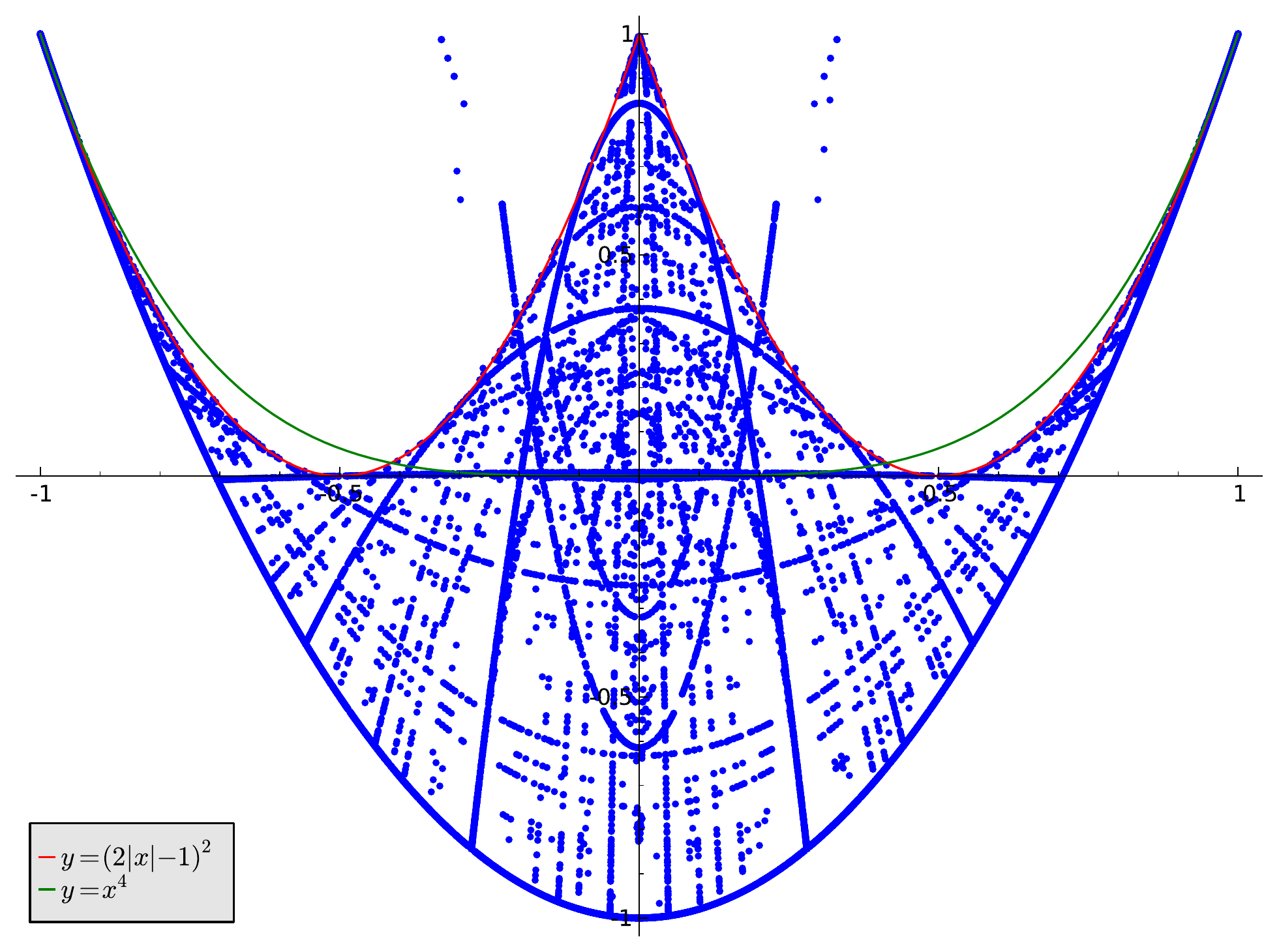}
\includegraphics[width=6cm]{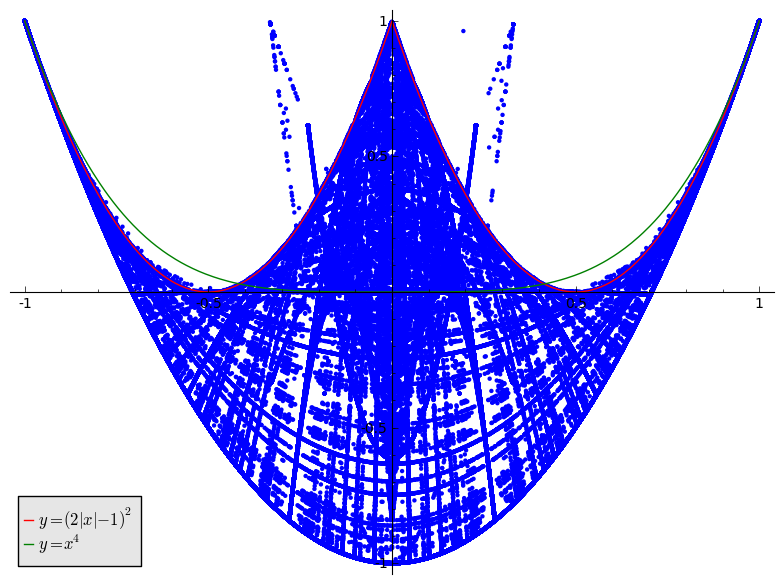}
\fi
\caption{Left: $(\hat{\mu_n}(4), \hat{\mu_n}(8))$
   for $n \in S, n \leq 100000$.
 Right: $(\hat{\mu_n}(4), \hat{\mu_n}(8))$ for $n \in S, n
 \leq 1000000$.}
\label{fig:int-attainable-up-to-10000}
\end{figure}


\subsection{Square free attainable measures}

As we shall see, the spikes in the region $|\hat{\mu}(4)| \leq
1/3$ are limits of measures $\mu_n$ where $n$ is divisible by $p^{e}$
for $e \geq 2$, but
for measures arising from square free $n \in S$, the structure is
much simpler.

We say that a measure $\mu$ is {\em square free attainable} if $\mu$
is a limit point of the set $\{ \mu_{n} : n \in S \text{ and $n$ is square free}\}$.
The set of square free attainable measures is also
closed under convolution, and it is easy to see that it is generated
by the set $\{\mu_p\}_{p \equiv 1 \mod 4}$, whose closure is the set of prime
attainable measures.

\begin{thm}
\label{thm:sqrfree attainable class}
If $\mu$ is square free attainable then
\begin{equation}
\label{eq:square free nec suff}
2\hat{\mu}(4)^{2}-1 \leq \hat{\mu}(8)
\leq \Mcc(\hat{\mu}(4)).
\end{equation}
Conversely, if $2x^{2}-1 \leq y \leq \Mcc(x)$ there
exists a square free attainable measure $\mu$ such that
$(\hat{\mu}(4), \hat{\mu}(8)) = (x,y)$.
\end{thm}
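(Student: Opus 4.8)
The plan is to translate the statement into the planar problem of identifying the closure of
\[
E:=\Big\{\Big(\textstyle\prod_{j=1}^{k}t_{j},\ \prod_{j=1}^{k}(2t_{j}^{2}-1)\Big):k\ge 1,\ t_{j}\in[-1,1]\Big\}\subset\R^{2},
\]
the claim being that $\overline{E}$ equals $R:=\{(x,y):|x|\le 1,\ 2x^{2}-1\le y\le\Mcc(x)\}$. For the reduction, recall that for a prime $p\equiv 1\bmod 4$ one has $r_{2}(p)=8$, and if $a+bi=\sqrt{p}\,e^{i\theta_{p}}$ with $a^{2}+b^{2}=p$ then the eight points of $\Lambda_{p}/\sqrt{p}$ lie at the angles $\pm\theta_{p}+k\pi/2$, so that $\widehat{\mu_{p}}(4)=\cos(4\theta_{p})$ and $\widehat{\mu_{p}}(8)=\cos(8\theta_{p})=2\widehat{\mu_{p}}(4)^{2}-1$ (and the same holds for $\mu_{2}$ and $\mu_{1}$, with $\theta=\pi/4$ and $\theta=0$). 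Since Fourier coefficients multiply under $\bigstar$ and, as noted in the text, the square free attainable measures are generated under convolution and weak limits by $\{\mu_{p}:p\equiv 1\bmod 4\}$, the pair $(\widehat{\mu}(4),\widehat{\mu}(8))$ of any square free attainable $\mu$ is a weak limit of pairs of finite convolutions $\mu_{p_{1}}\bigstar\cdots\bigstar\mu_{p_{r}}$, each of which is a point $(\prod\cos 4\theta_{p_{i}},\prod(2\cos^{2}4\theta_{p_{i}}-1))\in E$; hence it lies in $\overline{E}$. Conversely, by equidistribution of the angles $\theta_{p}$ of Gaussian primes the numbers $\cos(4\theta_{p})$ are dense in $[-1,1]$ (and realized for infinitely many $p$), so the pairs of such finite convolutions are dense in $E$; given $(x,y)\in\overline{E}$ one picks a sequence of convolutions whose pairs converge to $(x,y)$, passes to a weak-$*$ limit $\mu$ (the space of probability measures on $\sone$ being compact), and obtains a square free attainable $\mu$ with $(\widehat{\mu}(4),\widehat{\mu}(8))=(x,y)$. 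Thus it suffices to prove $\overline{E}=R$.

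For $E\subseteq R$, put $a_{j}=t_{j}^{2}\in[0,1]$, $P=\prod t_{j}$, $p=\prod a_{j}=P^{2}$, $Q=\prod(2a_{j}-1)$, and note $\Mcc(P)=\max(p^{2},(2\sqrt{p}-1)^{2})$. The lower bound $Q\ge 2p-1$ follows by induction on $k$: with $S=\prod_{j<k}(2a_{j}-1)$, $A=\prod_{j<k}a_{j}$ and $b=a_{k}$ one has $S(2b-1)-(2Ab-1)=2b(S-A)-(S-1)\ge 0$, since $S\le 1$ and, when $S<A$, also $2b(S-A)\ge 2(S-A)$ and $2S-2A-(S-1)=S-(2A-1)\ge 0$ by induction. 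For the upper bound $Q\le\Mcc(P)$ I would distinguish cases by the number $m$ of indices with $a_{j}<1/2$. If $m$ is odd, $Q\le 0\le\Mcc(P)$. If $m=0$, then $0\le 2a_{j}-1\le a_{j}^{2}$, so $Q\le\prod a_{j}^{2}=p^{2}\le\Mcc(P)$. If $m$ is even and $\ge 2$, then $p<(1/2)^{2}=1/4$, so $\Mcc(P)\ge(2\sqrt{p}-1)^{2}$; writing $P_{1}=\prod_{a_{j}<1/2}a_{j}$ and $P_{2}=\prod_{a_{j}\ge 1/2}a_{j}$, so $p=P_{1}P_{2}$, one estimates (using that the omitted factors are $\le 1$, the identity $(1-2a)(1-2b)-(1-2\sqrt{ab})^{2}=-2(\sqrt{a}-\sqrt{b})^{2}$, and monotonicity of $t\mapsto(1-2t)^{2}$ on $[0,1/2]$)
\[
Q\ \le\ \Big(\prod_{a_{j}<1/2}(1-2a_{j})\Big)\Big(\prod_{a_{j}\ge 1/2}a_{j}^{2}\Big)\ \le\ (1-2\sqrt{P_{1}})^{2}\,P_{2}^{2}\ \le\ (1-2\sqrt{P_{1}P_{2}})^{2}\ =\ (2\sqrt{p}-1)^{2},
\]
the final inequality amounting, after taking square roots and dividing by $\sqrt{P_{2}}-1\le 0$, to $2\sqrt{P_{1}P_{2}}\le\sqrt{P_{2}}+1$, which holds since $2\sqrt{P_{1}P_{2}}\le 2\sqrt{P_{2}}\le\sqrt{P_{2}}+1$.

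For $R\subseteq\overline{E}$, fix $(x,y)\in R$ with $x\neq 0$ (the case $x=0$ is handled directly, letting one factor tend to $0$). For each $k$, the value of $\prod_{j=1}^{k}(2t_{j}^{2}-1)$ subject to $\prod t_{j}=x$ depends only on $(|t_{1}|,\dots,|t_{k}|)$, and the admissible tuples $(|t_{j}|)$ with $\prod|t_{j}|=|x|$ form a compact connected set (a polytope in logarithmic coordinates, as each $|t_{j}|\in[|x|,1]$); since by the lower bound the minimum of $\prod(2t_{j}^{2}-1)$ on this set is $2x^{2}-1$, the attained values fill a closed interval $[2x^{2}-1,Q_{k}^{\max}]$. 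Taking $k=2$ with $|t_{1}|=|t_{2}|=\sqrt{|x|}$ shows $Q_{2}^{\max}\ge(2|x|-1)^{2}$; taking all $|t_{j}|=|x|^{1/k}$ shows $Q_{k}^{\max}\ge(2|x|^{2/k}-1)^{k}\to x^{4}$ as $k\to\infty$. As $Q_{k}^{\max}$ is nondecreasing in $k$ and bounded above by $\Mcc(x)$ (previous paragraph), $\sup_{k}Q_{k}^{\max}=\Mcc(x)$. Hence every $(x,y)$ with $2x^{2}-1\le y<\Mcc(x)$ lies in $E$, and $(x,\Mcc(x))$ lies in $\overline{E}$; thus $R\subseteq\overline{E}$, and since $R$ is closed and $E\subseteq R$, we conclude $\overline{E}=R$. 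I expect the main obstacle to be the upper bound $Q\le\Mcc(P)$ in the second paragraph — in particular isolating the correct case distinction (by the count of $a_{j}<1/2$) and checking that the ``even, $\ge 2$'' case lands on the branch $(2\sqrt{p}-1)^{2}$ via the chain of elementary inequalities above; the remaining ingredients are routine continuity and compactness arguments.
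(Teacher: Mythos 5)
Your proposal is correct and follows the same overall route as the paper: reduce to the planar problem about finite products $\prod(t_j,\,2t_j^2-1)$, split the upper bound into the ``all $2t_j^2-1\ge 0$'' regime (where $2t^2-1\le t^4$ gives the branch $x^4$) and the regime with negative factors (where pairing two negatives gives the branch $(2|x|-1)^2$), and prove the converse by taking limits of products converging to the two boundary curves plus an intermediate-value/connectedness argument to fill in the interior. Where you diverge is in the details, and the divergence is genuine enough to be worth recording. For the upper bound in the mixed-sign case, the paper routes through Lemma~\ref{eq:p1,p2 in B=> p1p2 in A} (which bounds the product of two points on the parabola by Lagrange multipliers) together with the stability of $B_1$ under scaling; you instead prove the same bound directly by the identity $(1-2a)(1-2b)=(1-2\sqrt{ab})^2-2(\sqrt a-\sqrt b)^2$, monotonicity of $(1-2t)^2$ on $[0,1/2]$, and the elementary inequality $(1-2\sqrt{P_1})^2P_2^2\le(1-2\sqrt{P_1P_2})^2$. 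This is a cleaner, more self-contained calculation. For the converse, the paper shows the two boundary curves $\{(x,x^4)\}$ (as a limit $(e^{-\alpha/n},(2e^{-2\alpha/n}-1)^n)$, Lemma~\ref{lem:x^4 attainable}) and $\{(x,(2x-1)^2)\}$ (the parabola squared) are square-free attainable and then tessellates the region by parabolas; you reorganize around vertical slices, showing for fixed $x$ that the attained $y$-values form an interval $[2x^2-1,Q_k^{\max}]$ with $Q_k^{\max}$ nondecreasing and $\sup_k Q_k^{\max}=\Mcc(x)$, using the same two test families ($|t_1|=|t_2|=\sqrt{|x|}$ and $|t_j|=|x|^{1/k}$). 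These are the same limits, dressed differently. Both approaches buy the same thing; yours is more elementary and avoids invoking the earlier lemmas on mixed-sign points, at the cost of redoing some of their content from scratch. One small point to make explicit if you write this up: the inductive lower bound $\prod(2a_j-1)\ge 2\prod a_j-1$ can be replaced by the observation that this is automatic from the description \eqref{eq:P2 Riesz} of $\Pc_2$, since $(\prod t_j,\prod(2t_j^2-1))$ is the Fourier pair of a probability measure.
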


\subsection{Prime power attainable measures}
\label{sec:prime-power-atta}
As mentioned before, 
the spikes in the region $|\hat{\mu}(4)| \leq 1/3$ are due to measures
$\mu_n$ for which $n$ is divisible by a prime power $p^{e}$, for $e$
large.  Recall that a measure $\mu$ is prime power attainable if $\mu$
is a weak limit point of the set $\{\mu_{p^{e}}\}_{p \equiv 1 \mod 4,
e \geq 1}$.
If $\mu$ is a prime power attainable measure, then the point
$(\hat{\mu}(4), \hat{\mu}(8))$ can indeed lie {\em above} the curve
$\max( x^{4}, (2|x|-1)^{2})$ in the region $|\hat{\mu}(4)| \leq 1/3$, though
this phenomenon only occurs for even exponents (see
Figure~\ref{figg:prime-attainable}).
\begin{figure}[h]
\ifanswers
\includegraphics[width=5cm]{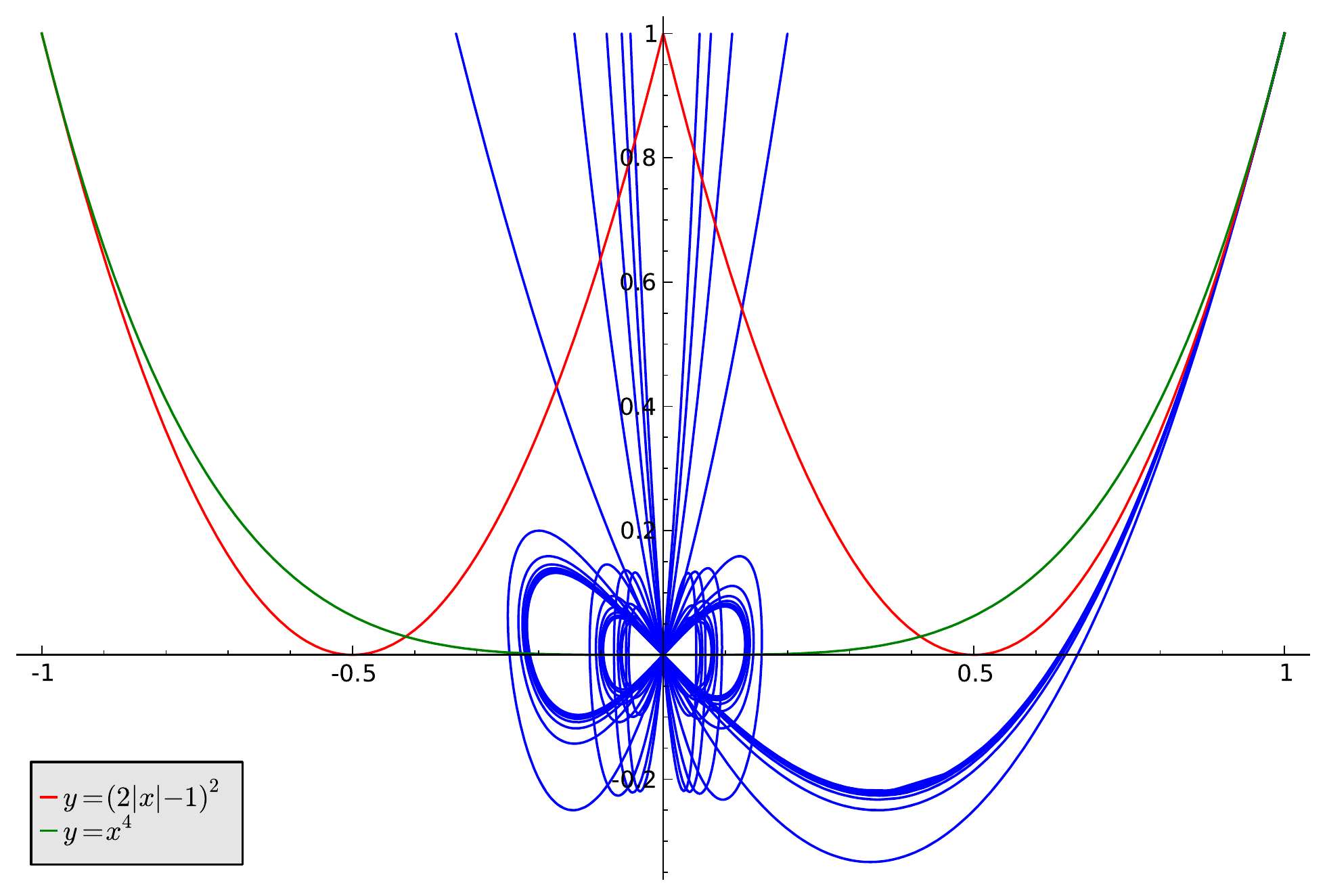}
\includegraphics[width=5cm]{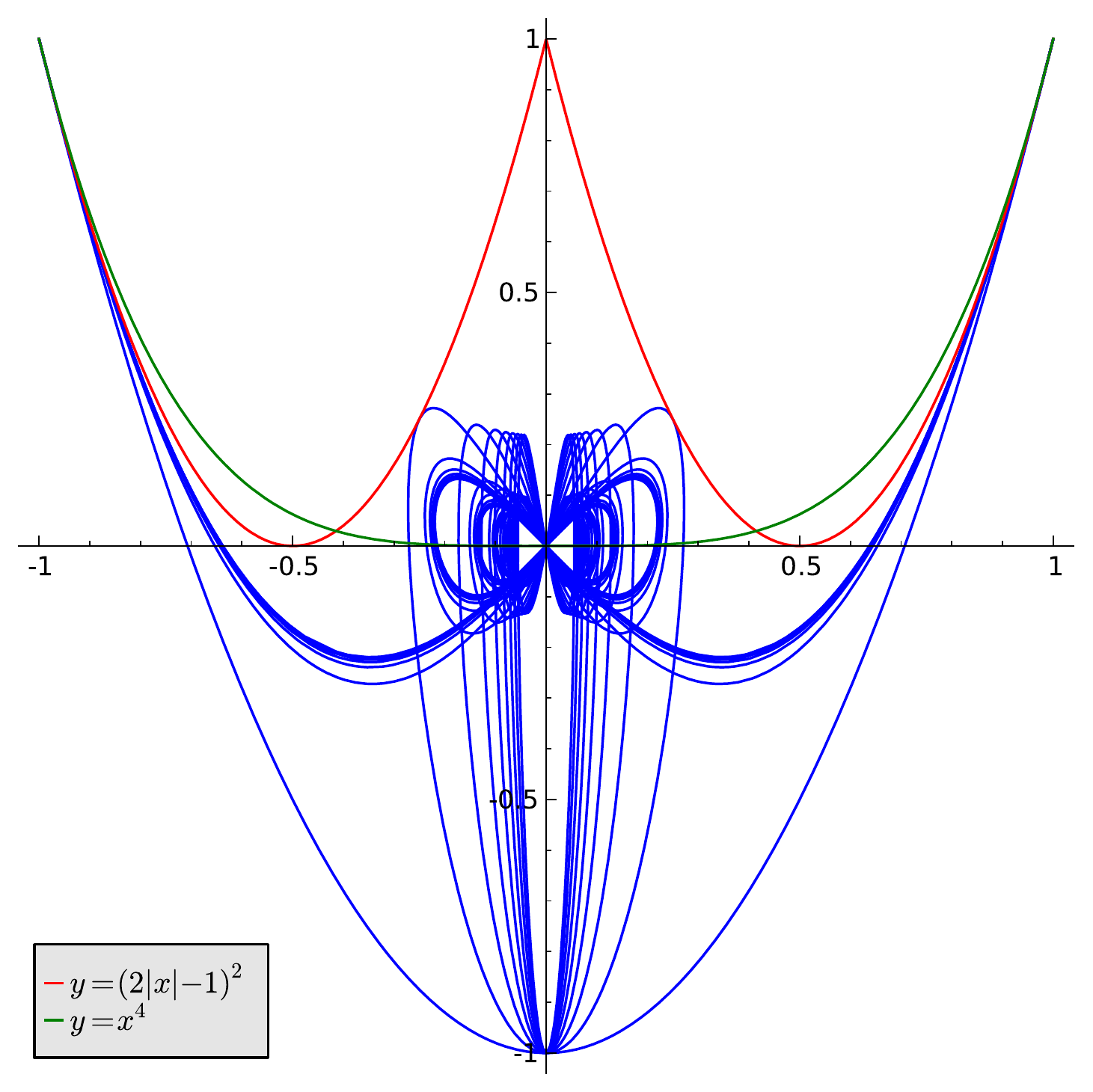}
\fi
 \caption{Prime power attainable measures attainable by $p^{M}$, $p\equiv 1(4)$ primes,
 $M\le 19$.  Left picture: even $M$. Right picture: odd $M$.}
\label{figg:prime-attainable}
\end{figure}
In fact, we will show that for every $k \in \Z^+$ there exists prime
power attainable $\mu$ such that $$(\hat{\mu}(4), \hat{\mu}(8)) = \left(
\frac{1}{2k+1}, 1\right).$$

\subsection{Fractal structure for $|\hat{\mu}(4)| \leq \frac{1}{3}$.}
\label{sec:thm:Ac2 top bar}

Let
\begin{equation}
\label{eq:Ac2 def}
\mathcal{A}_{2} := \{ (\hat{\mu}(4),  \hat{\mu}(8)) : \text{$\mu $ is
  attainable} \}
\end{equation}
denote the projection of the set of attainable measures onto the first
two non-trivial Fourier coefficients.  The intersection of
$\mathcal{A}_{2}$ with the vertical strip $\{ (x,y) : |x| \leq 1/3\}$
turns out to have a rather complicated fractal structure with
infinitely many spikes --- see Figure~\ref{fig:spikes}.  Since
$\Ac_{2}$ is closed under multiplication and $(-1,1)\in \Ac_{2}$ it
implies that it is invariant w.r.t.
\begin{equation}
\label{eq:(x,y)|->(-x,y)}
(x,y)\mapsto (-x,y),
\end{equation}
and hence we may assume $x\ge 0$.

\begin{figure}[h]
\ifanswers
\includegraphics[width=12cm]{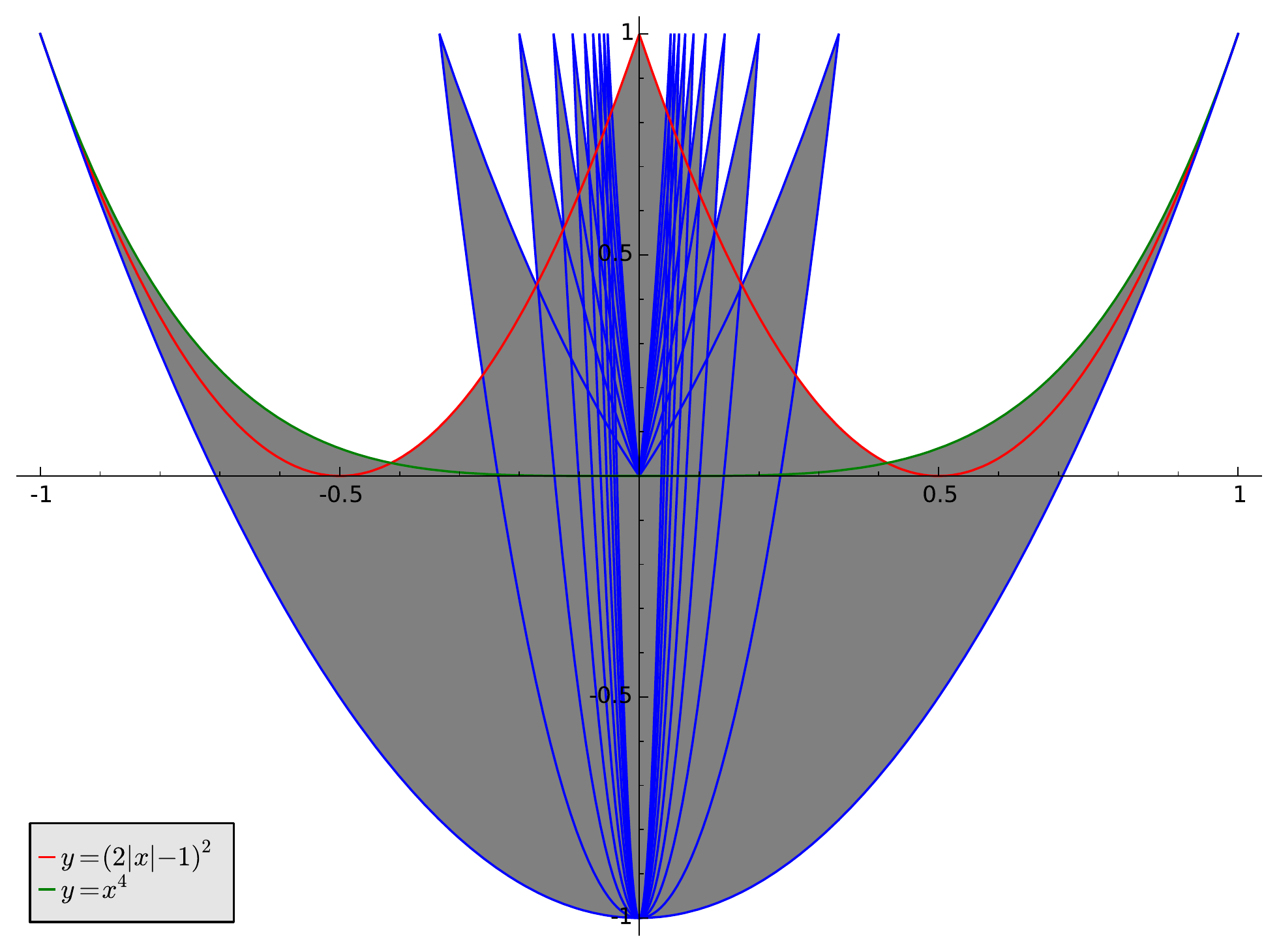}
\fi
\caption{Points $(\hat{\mu}(4), \hat{\mu}(8))$ for {some}
  attainable measures $\mu$ giving rise to spikes in the region
  $|\hat{\mu}(4)| \leq 1/3$.}
\label{fig:spikes}
\end{figure}

To be able to give a complete description of $\Ac_{2}$ we need a definition.

\begin{definition}

Let $x_{0}\in [0,1]$ and $a < x_{0}$.

\begin{enumerate}

\item We say that a pair of continuous functions
$$f_{1},f_{2}:(a,x_{0}]\rightarrow [0,1],$$ defines
a {\bf cornered domain between $a$ and $x_{0}$}
if for all $x\in (a,x_{0}]$ one has
$f_{1}(x)\le f_{2}(x)$, and $f_{1}(x)=f_{2}(x)$ if and only if
$x=x_{0}$, whence $f_{1}(x_{0})=f_{2}(x_{0})=1$.

\item For a pair of functions $f_{1},f_{2}$ as above the corresponding
{\bf cornered domain between $a$ and $x_{0}$} is
\begin{equation*}
\Dc_{a,x_{0}}(f_{1},f_{2}) = \{ (x,y)\in \R^{2}:\:  x\in (a,x_{0}],\, f_{1}(x) \le y\le f_{2}(x) \}.
\end{equation*}

\end{enumerate}

\end{definition}

The function $f_{1}$ and $f_{2}$ will be referred to as the ``lower
and upper" bounds for $\Dc_{a,x_{0}}(f_{1},f_{2})$
respectively.

\vspace{3mm}

\begin{thm}
\label{thm:Ac2 top bar}
\label{it:thm Ac2 top bar}
The intersection of the set $\Ac_{2}$ with the line $y=1$ equals
\begin{equation*}
\left\{ \left(\frac{\pm 1}{2k+1},1\right):\: k\ge 1 \right\}\cup
\{ (0,1)\} \cup \{ (\pm 1,1)\}.
\end{equation*}
Further, for $k\ge 1$, let $x_{k}=\frac{1}{2k+1}$ be the
$x$-coordinate of a point of the intersection described above. Then,
for every $k\ge 1$ there exists a pair of continuous piecewise
analytic functions $f_{1;k},\,f_{2;k}$ defining a cornered domain
between $0$ and $x_{k}$, so that $\Ac_{2}$ admits the following global
description:
\begin{multline}
\label{eq:Ac2=mix, spikes}
\Ac_{2}\cap \left\{ 0<x < \frac{1}{3} \right\} \\ =
\left(\bigcup\limits_{k=1}^{\infty}
  \Dc_{0,x_{k}}(f_{1;k},f_{2;k})\right)
\bigcup \left\{(x,y):0<x<\frac{1}{3},\,y\le (2x-1)^{2} \right\}.
\end{multline}

\end{thm}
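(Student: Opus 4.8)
The plan is to understand the structure of prime power attainable measures first, since by Proposition~\ref{prop:A monoid} the full attainable set $\Ac_2$ is generated under convolution (which on the Fourier side, restricted to symmetric measures supported on $\Sc^1$, acts by coordinatewise multiplication on $(\hat\mu(4),\hat\mu(8))$ — because $4\mid k$ is forced and the relevant cross terms vanish) by the prime power attainable measures, together with $\mu_{2^k}$ and $\mu_{q^{2k}}$, $q\equiv 3 \bmod 4$. Concretely, I would first compute $(\hat{\mu_{p^e}}(4),\hat{\mu_{p^e}}(8))$ for $p\equiv 1\bmod 4$: writing $p=\pi\bar\pi$ in $\Z[i]$, the points $\vec\lambda/\sqrt{p^e}$ are $\pi^a\bar\pi^{e-a}/|\pi|^e$ times units, so they lie at angles $a\theta_p$ (mod $\pi/2$ and conjugation) where $\theta_p=2\arg\pi$, and $\mu_{p^e}$ is essentially the symmetrization of the uniform measure on $\{a\theta_p : 0\le a\le e\}$ weighted binomially in the count $r_2(p^e)=e+1$. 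Hence $\hat{\mu_{p^e}}(4k) = \frac{1}{e+1}\sum_{a=0}^{e} e^{4ik(a\theta_p)}$ (after folding in the symmetries), a Dirichlet-kernel-type sum. As $p$ varies, $\theta_p$ equidistributes mod $2\pi$, so the closure of $\{(\hat{\mu_{p^e}}(4),\hat{\mu_{p^e}}(8)) : e\ge 1\}$ is $\bigcup_{e\ge 1}\{(\frac{1}{e+1}\sum_{a=0}^e \cos(4a\alpha)\cdot(\text{sym. factor}), \ldots) : \alpha\in[0,2\pi)\}$ — a countable union of analytic arcs indexed by $e$, each a closed curve; I expect each such arc, for $e\ge 2$, to reach the line $y=1$ exactly at $\alpha$ making $e\alpha\in\frac{\pi}{2}\Z$ suitably, giving $\hat{\mu}(4)=\frac{1}{e+1}\sum_{a=0}^e(\pm1)$, which lands on the values $\pm\frac{1}{2k+1}$ and $0$.

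Next I would establish the ``$y=1$'' part of the theorem. A symmetric measure has $\hat\mu(8)=1$ iff $\mu$ is supported on the $8$th roots of the relevant symmetry orbit — precisely, $\hat\mu(8)=\int z^{-8}d\mu = 1$ forces $z^8=1$ $\mu$-a.e., i.e. $\mu$ is supported on $\{1,i,-1,-i\}\cup\{\text{primitive } 8\text{th roots}\}$, and combined with the $\langle i\rangle$- and conjugation-invariance, $\mu = t\,\delta\text{-orbit of }1 + (1-t)\,\delta\text{-orbit of }e^{i\pi/4}$ for some $t\in[0,1]$, giving $\hat\mu(4) = t\cdot 1 + (1-t)\cdot(-1) = 2t-1$. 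So on $y=1$, every value $\hat\mu(4)\in[-1,1]$ is symmetric-attainable; the content is which are \emph{attainable}. An attainable measure on $y=1$ is a convolution of prime power attainable measures each of which must \emph{itself} have $\hat{\,\cdot\,}(8)=1$ (since $|\hat\mu(8)|\le\prod|\hat{\mu_i}(8)|\le 1$ with equality forcing each factor $=1$); so I reduce to: which prime power attainable $\mu$ have $\hat\mu(8)=1$? From the Dirichlet-kernel computation, $\hat{\mu_{p^e}}(8)=1$ in the limit forces $e\alpha$ to be a multiple of $\pi/2$ for the relevant $\alpha$, pinning $\hat\mu(4)$ to $\pm\frac{1}{e+1}\lfloor\cdots\rfloor$; working out the combinatorics gives exactly $\{\pm\frac{1}{2k+1}\}\cup\{0\}$ for the prime-power generators, and $\{\pm 1, 1\}$ comes from $\mu_{2^k}$ / $\mu_{q^{2k}}$ (which are $\delta$-orbits, $\hat{\,}(4)=\pm1$) and $\mu_p$ with $\theta_p$ near $0$ (giving $\hat{\mu_p}(4)$ near $1$) — then closing under multiplication of this countable set inside $[-1,1]$: products of $\frac{1}{2k+1}$'s and $\pm1$'s. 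The claim is that this closure is exactly $\{\pm\frac{1}{2k+1}:k\ge1\}\cup\{0\}\cup\{\pm1\}$; the only subtle point is that $0$ is a limit point (e.g. $\frac{1}{3^m}\to 0$, or $\frac{1}{2k+1}\to 0$) hence must be included, while no \emph{other} accumulation occurs because any infinite product of factors $\le\frac13$ in absolute value tends to $0$.

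For the cornered-domain description \eqref{eq:Ac2=mix, spikes}, I would argue as follows. The region $\{y\le(2x-1)^2, 0<x<\frac13\}$ is already present because it is $\subseteq$ the symmetric region and, more importantly, it is hit by convex-type combinations: e.g. convolving a prime-attainable measure realizing $(\hat{}(4),\hat{}(8))=(x,(2x-1)^2)$ on the lower envelope $y=2x^2-1$... actually the cleaner route is: the curve $y=(2x-1)^2$ for $x\in[0,1/3]$ is attainable (it is part of the max-curve $\Mcc$, and Theorem~\ref{thm:muhat-region}/\ref{thm:sqrfree attainable class} give attainable measures on and below $\Mcc$ for the range where $\Mcc(x)=(2|x|-1)^2$, i.e. $|x|\ge 1/3$ — wait, we need $|x|\le 1/3$ here), so I instead get the filled lower region from the lower bound $y\ge 2x^2-1$ together with downward flexibility: given an attainable $\mu$ at $(x,y_0)$ one convolves with a rotation-type symmetric measure — no, convolution lowers $\hat{}(4)$ too. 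The correct mechanism: the set $\{(x,y): |x|\le 1/3\}\cap\Ac_2$ below the spikes is obtained as the \emph{closure of the union} of the spike-domains $\Dc_{0,x_k}$ themselves together with whatever lies at their base; I will \emph{define} $f_{2;k}$ as the upper envelope of $\{(\hat\mu(4),\hat\mu(8))\}$ over attainable $\mu$ with $\hat\mu(4)=x$ that arise as products involving the $k$-generator, and $f_{1;k}$ as the corresponding lower envelope, and check (i) continuity and piecewise analyticity, inherited from the analytic arcs in the prime-power computation and the fact that finite products of analytic functions are analytic with finitely many pieces coming from the $\max$ in $\Mcc$ and from switching which generator dominates; (ii) $f_{1;k}(x)=f_{2;k}(x)$ only at $x=x_k$ with common value $1$, which is the $y=1$ analysis above; (iii) that every attainable point with $0<x<1/3$ lies in one of these $\Dc_{0,x_k}$ or in the base region, by decomposing $\mu$ into prime power factors and tracking which $x_k$ the ``dominant'' factor contributes while the remaining factors only push the point downward and leftward into some $\Dc_{0,x_{k'}}$. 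The main obstacle is precisely proving the \emph{exact} equality in \eqref{eq:Ac2=mix, spikes} — i.e. that the upper/lower envelopes $f_{i;k}$ are genuinely attained (no gaps) and that nothing escapes the stated union; this requires a careful monotonicity/convexity analysis of how coordinatewise multiplication by the family of prime-power arcs acts on a point, showing the orbit of $(-1,1)$ and of the arcs under this semigroup action, closed up, is exactly the RHS. I expect the bulk of the work, and all the genuine difficulty, to be in constructing $f_{2;k}$ explicitly (it will involve the binomial/Dirichlet-kernel extremizers) and proving the matching upper bound $\hat\mu(8)\le f_{2;k}(\hat\mu(4))$ for \emph{all} attainable $\mu$ in the $k$-th spike — an optimization over all ways of writing a given $x\in(0,x_k]$ as a product of values drawn from the prime-power arcs, constrained to keep the second coordinate maximal.
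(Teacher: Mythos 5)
Your high-level strategy matches the paper's: compute the Fourier coefficients of the prime-power measures explicitly (the paper's curves $\gamma_{A}(t)=(G_A(t),G_A(2t))$ with $G_A(t)=\sin(At)/(A\sin t)$ are exactly your ``Dirichlet-kernel'' arcs), identify the $y=1$ intersection from these curves, then describe $\Ac_2$ as envelopes of coordinatewise products. However, the proposal is a plan of attack rather than a proof: you correctly identify where the genuine difficulty lies --- ``an optimization over all ways of writing a given $x\in(0,x_k]$ as a product of values drawn from the prime-power arcs, constrained to keep the second coordinate maximal'' --- but you do not solve it, and you are missing the specific technical ideas that make that optimization tractable.

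The paper resolves this via a log-convexity argument that your proposal does not contain. The crucial lemmas are: (i) the reparametrized log-curve $\eta_A(t)=(\log(A|G_A(t)|),\log G_A(2t))$, for $A\geq 3$ odd and $t$ near $\pi/2$, is the graph of a convex increasing analytic function $h_A$ on $(-\infty,0]$ with $h_A(0)=0$ and $0< h_A'\leq 4/3$ (Lemma~\ref{lem:GA analyt par} and Lemma~\ref{lem:GA curv convex}, whose proof in turn reduces to the delicate sign analysis in Lemma~\ref{lem:h(s) tedious <=0}); (ii) for convex functions vanishing at $0$, the supremum of $\sum h_i(z_i)$ subject to $\sum z_i=z$, $z_i\leq 0$, is always attained with all but one $z_i=0$ (Lemma~\ref{lem:max conv inside}) --- this is precisely what collapses the ``optimization over all factorizations'' to a single active factor and makes $f_{2;k}$ (Definition~\ref{def:f2 def} and Proposition~\ref{prop:f2 finite}) a well-defined piecewise-analytic curve; (iii) the slope bound $h_A'\leq 4/3$ combined with $y_1\leq x_1^4$ for the ``near-$0$'' factors controls how multiplication by those factors can move a spike point, giving the upper inequality $y\leq f_{2;k}(x)$, while Corollary~\ref{cor:y^4/3>=1/Ax} and the elementary identity $(2x_1^2-1)(2x_2^2-1)-(2(x_1x_2)^2-1)=2(x_1^2-1)(x_2^2-1)$ give the lower one; and (iv) a separate reduction (Proposition~\ref{prop:x<1/3 below mix cur}) disposes of any factor with $t_i$ in a ``middle'' range by forcing $y\leq(2|x|-1)^2$. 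Without (i)--(ii) your envelope $f_{2;k}$ is not even shown to be finite or well-behaved, and without (iii)--(iv) the claimed disjoint decomposition into spikes plus base region does not follow.

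Two smaller issues. First, in the $y=1$ argument you write ``$|\hat\mu(8)|\le\prod|\hat{\mu_i}(8)|\le 1$ with equality forcing each factor $=1$'' --- equality only forces $|\hat{\mu_i}(8)|=1$, so $\hat{\mu_i}(8)=\pm1$; the case $\hat{\mu_i}(8)=-1$ can occur (for even $A_i$ at $t_i=\pi/2$, where $G_A(\pi)=(-1)^{A+1}$) and must be ruled out or absorbed (it forces $\hat{\mu_i}(4)=0$ hence $\hat\mu(4)=0$, which is in the stated set, but you need to say this). You also argue only about finite convolutions, whereas $\Ac_2$ is a \emph{closure}; the paper handles this by deducing the first assertion from the second, which is stated for the closure. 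Second, your parenthetical about where $\Mcc(x)=(2|x|-1)^2$ holds has the inequality backwards: in fact $\Mcc(x)=(2|x|-1)^2$ for $|x|\le\sqrt2-1$ (so in particular for $|x|\le 1/3$), which is what makes Proposition~\ref{prop:x>1/3 under max attainable} (valid for all $x$, despite its name) yield the base region $\{2x^2-1\leq y\leq(2x-1)^2\}$ directly.
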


Theorem \ref{thm:Ac2 top bar} is a rigorous
explanation of the thin strips or ``spikes" connecting all the
reciprocals of odd numbers on $y=1$, and the curve $y=(2|x|-1)^{2}$,
as in Figure \ref{fig:spikes}.  We remark that the functions $f_{1;k}$
and $f_{2;k}$ can with some effort be computed explicitly. The lower
bound $f_{1;k}$ is given as the (component-wise) product of
$(x_{k},1)$ by the parabola $y=2x^{2}-1$ mapping $(1,1)\mapsto
(x_{k},1)$; we re-parameterize the resulting curve $(x \cdot
x_{k},2x^{2}-1)$ so that it corresponds to the function
\begin{equation}
\label{eq:f1 def}
f_{1;k}(x) = \frac{2}{x_{k}^{2}} x^{2}-1,
\end{equation}
whose slope at $x_{k}$ is $f_{1}'(x_{k}) = 4(2k+1)$.

The upper bound
$f_{2}(x)$ is of a somewhat more complicated nature, see Definition \ref{def:f2 def}; it is analytic
around the corner with the slope $f_{2}'(x_{k}) = \frac{4}{3}(2k+1)$
(see the proof of Theorem \ref{thm:Ac2 top bar} in
section \ref{sec:fractal x>1/3}),
and it is {\em plausible} that it is (everywhere) analytic.
It then follows that the set
$\Ac_{2}$ has a discontinuity, or a jump, at $x=x_{k}$ (this is
a by-product of the fact that the slopes of both $f_{1}$ and $f_{2}$ at $x_{k}$
are {\em positive}.)

\subsection{Discussion}
Our interest in attainable measures originates in the study \cite{KKW}
of zero sets (``nodal lines'') of random Laplace eigenfunctions on the
standard torus $\T := \R^2/\Z^2$.
%
More precisely, for each $n \in S$ there is an associated Laplace
eigenvalue given by $4\pi^2 n$, with eigenspace dimension equal to
$r_{2}(n)$.  On each such eigenspace there is a natural notion of a
``random eigenfunction'', and the variance (apropriately normalized)
of the nodal line lengths of these random eigenfunctions equals
$(1+\widehat{\mu_{n}}(4)^{2})/512 + o(1)$ as $r_{2}(n) \to \infty$.
It was thus of particular interest to show that the accumulation
points of $\widehat{\mu_{n}}(4)^{2}$, as $n \in S$ tends to infinity
in such a way that also the eigenspace dimension $r_{2}(n) \to \infty$, is
maximal --- namely the full interval $[0,1]$.  This is indeed the case
(cf. \cite[Section~1.4]{KKW}), but a very natural question is: which
measures are attainable?

In order to obtain asymptotics for the above variance it is essential
to assume that the eigenspace dimension grows, and one might wonder if
``fewer'' measures are attainable under this additional assumption.
However, as the following shows, this is not the case (the proof can
be found in section \ref{sec:attain<=>strongly}.)
\begin{prop}
\label{prop:attain<=>strongly}
A measure $\mu\in\mathcal{P}$ is attainable (i.e. $\mu\in\Ac$), if and
only if there exists a sequence $\{n_{j}\}$ such that
$\mu_{n_{j}}\Rightarrow \mu$ with the additional property that
$r_{2}(n_{j})\rightarrow\infty$.
\end{prop}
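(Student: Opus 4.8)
The forward implication is the substance; the reverse is immediate, since any such sequence in particular exhibits $\mu$ as a weak limit point of $\{\mu_{n}\}_{n\in S}$. So assume $\mu$ is attainable and fix $n_{j}\in S$ with $\mu_{n_{j}}\Rightarrow\mu$. If $\limsup_{j}r_{2}(n_{j})=\infty$ we are done after passing to a subsequence, so the only case to treat is that $r_{2}(n_{j})$ stays bounded. The plan is to replace $n_{j}$ by $N_{j}:=n_{j}m_{j}$ for a suitable $m_{j}\in S$ with $\gcd(n_{j},m_{j})=1$, chosen so that $r_{2}(m_{j})\to\infty$ while $\mu_{m_{j}}\Rightarrow\omega$, where $\omega:=\frac14\sum_{k=0}^{3}\delta_{i^{k}}$ is the uniform measure on the fourth roots of unity. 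Granting this, three elementary observations conclude the argument: (i) by \eqref{eq:mu(mn)=mu(m)*mu(n)}, $\mu_{N_{j}}=\mu_{n_{j}}\bigstar\mu_{m_{j}}$; (ii) $n\mapsto r_{2}(n)/4$ is multiplicative, so $r_{2}(N_{j})=r_{2}(n_{j})r_{2}(m_{j})/4\ge r_{2}(m_{j})\to\infty$; (iii) $\omega$ is the identity of the convolution semigroup of symmetric measures, since for symmetric $\nu$ one has $\nu\bigstar\omega=\frac14\sum_{k}(z\mapsto i^{k}z)_{*}\nu=\nu$ by the $i$-invariance of $\nu$. Because $\mu$ is automatically symmetric and convolution is jointly weakly continuous on the compact group $\sone$, we obtain $\mu_{N_{j}}=\mu_{n_{j}}\bigstar\mu_{m_{j}}\Rightarrow\mu\bigstar\omega=\mu$ with $r_{2}(N_{j})\to\infty$, which is exactly the assertion.

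It remains to construct the $m_{j}$. For a prime $p\equiv 1\bmod 4$ write $p=\pi_{p}\overline{\pi_{p}}$ in $\Z[i]$ and set $\theta_{p}:=\arg\pi_{p}$. By Hecke's theorem on the equidistribution of Gaussian primes in angular sectors (indeed, mere density of $\{\theta_{p}\}$ near $0$ suffices), for every $\varepsilon>0$ there are infinitely many such $p$ with $0<\theta_{p}<\varepsilon$. Fix $j$; for $\ell=1,\dots,j$ pick \emph{distinct} primes $p_{\ell}\equiv 1\bmod 4$, none dividing $n_{j}$, with $0<\theta_{p_{\ell}}<2^{-\ell}/j$, and put $m_{j}:=p_{1}\cdots p_{j}$. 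Then $r_{2}(m_{j})=4\prod_{\ell=1}^{j}\bigl(r_{2}(p_{\ell})/4\bigr)=4\cdot 2^{j}\to\infty$, using $r_{2}(p_{\ell})=8$. Moreover, up to the four units, $\Lambda_{m_{j}}$ consists exactly of the $2^{j}$ Gaussian integers $\prod_{\ell}\pi_{p_{\ell}}^{\varepsilon_{\ell}}\overline{\pi_{p_{\ell}}}^{\,1-\varepsilon_{\ell}}$ with $(\varepsilon_{\ell})\in\{0,1\}^{j}$, each of argument $\sum_{\ell}(2\varepsilon_{\ell}-1)\theta_{p_{\ell}}$, which lies within $\sum_{\ell}\theta_{p_{\ell}}<1/j$ of $0$. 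Hence every atom of $\mu_{m_{j}}$ lies within $O(1/j)$ of a fourth root of unity, and by symmetry mass $1/4$ accumulates at each of $1,i,-1,-i$; therefore $\mu_{m_{j}}\Rightarrow\omega$.

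The single nontrivial input is the existence of Gaussian primes of arbitrarily small argument (Hecke); everything else is bookkeeping with the multiplicativity of $r_{2}$ and of $n\mapsto\mu_{n}$ on coprime arguments, together with the triviality of convolving a symmetric measure against $\omega$. I expect the main obstacle to be essentially one of exposition: one must be slightly careful that the bounded-$r_{2}$ case is genuinely the only one needing work, and that the passages to subsequences are harmless — which they are, since the conclusion sought is itself the existence of a suitable subsequence.
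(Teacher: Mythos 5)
Your proof is correct and takes essentially the same approach as the paper: multiply $n_{j}$ by a coprime factor whose associated measure converges to the neutral element of the convolution monoid of symmetric measures (your $\omega=\tilde{\delta}_{0}$, the paper's de-symmetrized $\delta_{0}$) while $r_{2}$ of that factor goes to infinity, then invoke multiplicativity of $n\mapsto\mu_{n}$ and of $r_{2}/4$. The only difference is presentational: the paper cites Cilleruelo's construction of such a sequence and ensures coprimality by the approximation argument from Proposition~\ref{prop:A monoid}, whereas you re-derive the sequence directly from Hecke's theorem and build coprimality into the choice of primes $p_{\ell}\nmid n_{j}$ — a slightly more self-contained route with the same content.
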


\subsection{Outline}
\label{sec:outline}
For the convenience of the reader we briefly outline the contents of
the paper. In Section \ref{sec:exmp attain} we give some explicit examples of attainable,
and non-attainable measures, and describe our motivation for studying
the set of attainable measures.  In Section \ref{sec:Fourier} we give a brief
background on Fourier coefficients of probability measures, and in
Section \ref{sec:NT background} we recall some needed facts from number theory
along with proving the more basic results above.  Section \ref{sec:muhat-region}
contains the proof of Theorem \ref{thm:muhat-region} (a complete
classification of attainable measures in the region
$|\hat{\mu}(4)|>1/3$), and Section \ref{sec:fractal x>1/3} contains the proof of
Theorem \ref{thm:Ac2 top bar} (the complete classification of attainable
measures in the region $|\hat{\mu}(4)| \leq 1/3$), postponing
some required results of technical nature to the appendix.  Finally, in
Section \ref{sec:sqrfree attainable class}, we classify the set of square-free attainable measures.

\subsection{Acknowledgements}


We would like to thank Ze\'{e}v Rudnick and Mikhail Sodin for raising
the problem considered in this manuscript, and the many fruitful
discussions concerning various subjects related to the presented
research. We thanks Fedor Nazarov and Peter Sarnak for many
stimulating and fruitful discussions leading to
some improvements of our results.

P.K. was partially supported by grants from the G\"oran
  Gustafsson Foundation for Research in Natural Sciences and
Medicine, and the Swedish Research Council (621-2011-5498).
The research leading to these results has received funding from the
European Research Council under the European Union's Seventh
Framework Programme (FP7/2007-2013), ERC grant agreement
n$^{\text{o}}$ 335141 (I.W.), and
an EPSRC Grant EP/J004529/1 under the First Grant Scheme (I.W.).

\section{Examples of attainable and unattainable measures}
\label{sec:exmp attain}

\subsection{Some conventions}

Let
$$\tilde{\delta}_{0} := \frac{1}{4}\sum_{k=0}^{3}\delta_{i^{k}} $$
be the atomic probability measure supported at the $4$ symmetric
points $\pm 1$, $\pm i$ (``Cilleruelo measure"). Given an angle $\theta \in [0,\pi/4]$, let
\begin{equation}
\label{eq:tilde delta theta def}
\tilde{\delta}_{\theta} :=
\tilde{\delta}_{0} \bigstar ( \delta_{e^{i \theta}} + \delta_{e^{-i
    \theta}} )/2
= \frac{1}{8}
\sum_{k=0}^{3} \left(
\delta_{e^{i (\pi k/2+\theta)}} + \delta_{e^{i (\pi k/2-\theta)}}
\right);
\end{equation}
recall that $\bigstar$ denotes convolution on $\mathcal{S}^{1}$.
For $\theta=0,\pi/4$ the measure $\tilde{\delta}_{\theta}$ is supported
at $4$ points whereas for all other values of $\theta$ the support consists
of $8$ points.
Given an integer $m \geq 1$ and $\theta \in [0,\pi/4]$, let
$$
\tilde{\delta}_{\theta,m} :=
\tilde{\delta}_{0} \bigstar
\left(
 \frac{1}{m+1} \sum_{j=0}^{l}\delta_{e^{i \theta(m-2j)} } \right).
$$
We note that $\tilde{\delta}_{\theta} = \tilde{\delta}_{\theta,1}$,
and that $\mu$ is symmetric if and only if $\mu$ is invariant under
convolution with $\tilde{\delta}_{0}$; convolving with
$\tilde{\delta}_{0}$ is a convenient way to ensure that a measure is
symmetric.

\subsection{Some examples of attainable and unattainable measures}

Given $\theta \in [0,\pi/4]$ let $\tau_{\theta}$ denote the symmetric
probability measure with uniform distribution on the four arcs given
by
$$\{ z : |z|=1, \arg(z) \in \cup_{k=0}^{4} [k\pi/2-\theta,k\pi/2+\theta
]\}.$$ Using some well known number theory given below (cf. section
\ref{sec:NT background}) it is straightforward to show that
$\tau_{\theta}$ is attainable for all $\theta \in [0,\pi/4]$.  In
particular, $d\mu_{\text{Haar}} = d\tau_{\pi/4}$, the Haar measure on
$\mathcal{S}^{1}$ normalized to be a probability measure, is
attainable.    In fact,
it is well known (see e.g. ~\cite{FKW}) that there exists a density
one subsequence $\{n_{j}\}\subseteq S$, for which the corresponding
lattice points $\Lambda_{n_{j}}$ become equidistributed on the circle;
this gives another construction of $d\mu_{\text{Haar}}$ as an
attainable measure.


It is also possible to construct other singular measures.  In
Section~\ref{sec:NT background} we will outline a construction of
attainable measures, uniformly supported on Cantor sets.
Moreover, if $q$ is a prime congruent to $3$ modulo $4$ it is well
known that the solutions to $a^{2}+b^{2} = q^{2}$ are given by $(a,b)
= (0,\pm q)$, or $(\pm q, 0)$, thus $\tilde{\delta}_{0}$ is
attainable.  A subtler fact, due to Cilleruelo, is that there exists
sequences $\{n_{j}\}_{j\ge 1}$ for which $\Lambda_{n_{j}}$ has very
singular angular distribution even though the number of points
$r_{2}(n_{j})$ tends to infinity.  Namely, it is possible to force all
angles to be arbitrarily close to integer multiples of $\pi/2$, hence
$\frac{1}{4}\sum_{k=0}^{3}\delta_{i^{k}}$ is an accumulation point of
$d\mu_{n_{j}}$ as $n_{j} \to \infty$ in such a way that $r_{2}(n_{j})
\to \infty$.

We may also construct some explicit {\em unattainable} probability
measures on $\Sc^{1}$ satisfying all the symmetries; in fact the
following corollary of Theorem \ref{thm:Ac2 top bar} constructs
explicit unattainable measures, remarkably supported on $8$ points
only --- the minimum possible for symmetric unattainable
measures.

\begin{cor}[Corollary from Theorem \ref{thm:Ac2 top bar}]
\label{eq:upsa unattainable}
The probability measure $$\eta_{a}:=a\tilde{\delta}_{0}+(1-a)\tilde{\delta}_{\pi/4}$$ is
attainable, if and only if $a= 0,\,\frac{1}{2},\,1$ or $a$ is
of the form
$$a=\frac{1}{2}\pm\frac{1}{2(2k+1)}$$ for some $k\ge 1$.
\end{cor}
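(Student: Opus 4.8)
The plan is to reduce Corollary \ref{eq:upsa unattainable} to a computation of the pair $(\hat{\eta_a}(4), \hat{\eta_a}(8))$ and then apply the full classification of $\Ac_2$ in Theorems \ref{thm:muhat-region} and \ref{thm:Ac2 top bar}. First I would compute the Fourier coefficients of the building blocks. Since $\tilde\delta_0$ is supported at $\pm1,\pm i$, one has $\widehat{\tilde\delta_0}(4)=\widehat{\tilde\delta_0}(8)=1$ (indeed $\widehat{\tilde\delta_0}(k)=1$ whenever $4\mid k$). For $\tilde\delta_{\pi/4}=\tilde\delta_0\bigstar(\delta_{e^{i\pi/4}}+\delta_{e^{-i\pi/4}})/2$, convolution multiplies Fourier coefficients, and $(\delta_{e^{i\pi/4}}+\delta_{e^{-i\pi/4}})/2$ has $k$-th coefficient $\cos(k\pi/4)$; hence $\widehat{\tilde\delta_{\pi/4}}(4)=\cos\pi=-1$ and $\widehat{\tilde\delta_{\pi/4}}(8)=\cos 2\pi=1$. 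Therefore
\begin{equation*}
\bigl(\hat{\eta_a}(4),\hat{\eta_a}(8)\bigr) = \bigl(a\cdot 1 + (1-a)\cdot(-1),\; a\cdot 1 + (1-a)\cdot 1\bigr) = (2a-1,\,1).
\end{equation*}

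Next I would observe that, since $\eta_a$ is a symmetric measure (it is a convex combination of symmetric measures), attainability of $\eta_a$ is equivalent to membership of $(2a-1,1)$ in $\Ac_2$ — but one must be slightly careful here: $\Ac_2$ is only the projection onto the first two nontrivial Fourier coefficients, so a priori $(\hat{\eta_a}(4),\hat{\eta_a}(8))\in\Ac_2$ does not immediately give that $\eta_a$ itself is attainable. The resolution is that when the point lies on the line $y=1$, the measure is essentially forced: if $\hat\nu(8)=1$ for a symmetric probability measure $\nu$, then $z^8=1$ $\nu$-a.e., so $\nu$ is supported on the $8$th roots of unity, i.e. on $\{\pm1,\pm i,\pm e^{i\pi/4},\pm e^{-i\pi/4}\}$; combined with symmetry and $\hat\nu(4)=2a-1$ this pins down $\nu=\eta_a$ exactly. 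So for points on $y=1$ the projection $\Ac_2$ determines attainability of the corresponding measure completely, and the corollary follows from the description of $\Ac_2\cap\{y=1\}$.

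The remaining step is to read off $\Ac_2\cap\{y=1\}$ from Theorem \ref{thm:Ac2 top bar} together with Theorem \ref{thm:muhat-region}. In the regime $|x|>1/3$, Theorem \ref{thm:muhat-region} forces $\hat\mu(8)\le \Mcc(\hat\mu(4))=\max(x^4,(2|x|-1)^2)$, which equals $1$ only at $x=\pm1$ (since both $x^4<1$ and $(2|x|-1)^2<1$ for $1/3<|x|<1$); so on $y=1$ with $|x|>1/3$ the only attainable points are $(\pm1,1)$, realized e.g. by $\tilde\delta_{\pi/4}$ and $\tilde\delta_0$ respectively (equivalently $a=0,1$). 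In the regime $|x|\le1/3$, Theorem \ref{thm:Ac2 top bar} states outright that $\Ac_2\cap\{y=1\}=\{(\pm1/(2k+1),1):k\ge1\}\cup\{(0,1)\}$ (the part inside the strip, together with the boundary points already handled). Translating $x=2a-1$ back: $x=0\iff a=1/2$; $x=\pm1\iff a=1,0$; and $x=\pm1/(2k+1)\iff a=\frac12\pm\frac1{2(2k+1)}$, which is exactly the list in the corollary. The main obstacle is the subtlety flagged in the previous paragraph — namely justifying that membership of $(2a-1,1)$ in the \emph{projection} $\Ac_2$ is genuinely equivalent to attainability of the \emph{measure} $\eta_a$ — but this is handled cleanly by the rigidity of the constraint $\hat\nu(8)=1$, which collapses the fibre of the projection over each point of $y=1$ to a single measure.
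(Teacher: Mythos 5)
Your proof is correct, and since the paper states this corollary without an explicit proof, you have supplied exactly the argument the authors intend: compute $(\hat{\eta_a}(4),\hat{\eta_a}(8))=(2a-1,1)$, invoke the description of $\Ac_{2}\cap\{y=1\}$ from Theorem~\ref{thm:Ac2 top bar}, and translate back via $x=2a-1$. The one genuine subtlety --- that membership of $(2a-1,1)$ in the \emph{projection} $\Ac_{2}$ must imply attainability of the \emph{measure} $\eta_a$ itself --- you identify and dispose of correctly: $\hat{\nu}(8)=1$ forces $z^{8}=1$ $\nu$-a.e., so a symmetric $\nu$ is supported on the eighth roots of unity and is therefore of the form $a\tilde{\delta}_{0}+(1-a)\tilde{\delta}_{\pi/4}$ with $a$ determined by $\hat{\nu}(4)$; this collapses the fibre over each point of the line $y=1$ to a single measure, which is precisely what makes the reduction to $\Ac_{2}$ legitimate.
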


\section{Fourier analysis of probability measures}

\label{sec:Fourier}

\subsection{Some notation and de-symmetrization of probability measures.}

It is convenient to work with two models: either with the unit circle
embedded in $\C$, or
$$
\mathbb{T}^{1}:= \R/2\pi\Z.
$$
Rather than working with $\{ \mu_{n} \}$ and its weak partial limits,
for notational convenience we work with their de-symmetrized variants,
i.e.
\begin{equation}
\label{eq:nun desymmetrized mun}
d\nu_{n}(\theta) = d\mu_{n}\left(\frac{\theta}{4}\right),
\end{equation}
$\theta\in \mathbb{T}^{1}$. The measures $\nu_{n}$ are invariant under
complex conjugation (where thought of $\mathcal{S}^{1}\subseteq \C$);
equivalently,
for $\theta\in \mathbb{T}^{1}$,
\begin{equation*}
d\nu_{n}(-\theta)=d\nu_{n}(\theta).
\end{equation*}

\begin{notation}
\label{not:P,A def}

Let $\mathcal{P}$ be the set of all probability measures $\mu$ on
$\mathcal{S}^{1}$ satisfying for $\theta\in \mathbb{T}^{1}$
\begin{equation}
\label{eq:mu(theta)=mu(-theta)}
d\mu(-\theta) = d\mu(\theta).
\end{equation}

Further, let $\mathcal{A}\subseteq\mathcal{P}$ be the set of
all weak partial limits of $\{\nu_{n}\}$
i.e. all probability measures $\mu\in\mathcal{P}$ such that there exists
a sequence $\{ n_{j}\}$ with $$\nu_{n_{j}}\Rightarrow \mu.$$


\end{notation}

The set $\Ac$ defined above is the de-symmetrization
of the collection of attainable measures via \eqref{eq:nun desymmetrized mun}; by abuse
of notation we will refer to
the elements of $\Ac$ as attainable measures.
One may restate Proposition \ref{prop:A monoid} as stating that $\Ac$ is closed w.r.t. convolutions;
thus $\Ac$ is an abelian monoid with identity $\delta_{0}\in \Ac$.
The effect of the de-symmetrization \eqref{eq:nun desymmetrized mun} is that for all $m\in \Z$
$$\widehat{\nu_{n}}(m) = \widehat{\mu_{n}}(4m);$$ since by the $\pi/2$-rotation invariance of $\mu_{n}$,
$\widehat{\mu}(k) = 0$ unless $k$ is divisible by $4$, this transformation preserves all the information.

\subsection{Measure classification on the Fourier side}

\label{sec:Fourier image}

We would like to study the image of $\Ac$ under Fourier transform, or,
rather, its projections into finite dimensional spaces. Since
$\Ac\subseteq \Pc$ we first study the Fourier
image of the latter; a proper inclusion of the image of $\Ac$ inside
the image of $\Pc$ would automatically imply the existence of
unattainable measures $\mu\in\Pc\setminus\Ac$.

For $\theta\in (0,\pi)$ let $\upsilon_{\theta}$ be the probability measure
\begin{equation}
\label{eq:upsilontheta def}
\upsilon_{\theta} = \frac{1}{2}\left(\delta_{\theta}+\delta_{-\theta}\right),
\end{equation}
and for the limiting values $\theta=0,\pi$ we denote
$\upsilon_{0}=\delta_{0}$ and $\upsilon_{\pi} = \delta_{\pi}$. As for
$\theta\in [0,\pi]$, $\delta_{\theta}$ are the de-symmetrizations of
$\tilde{\delta}_{\theta/4}$ in \eqref{eq:tilde delta theta def}, and
it then follows that $\upsilon_{\theta}\in \Ac$. Clearly
(see e.g. ~\cite{Krein-Nudelman}, Chapter $1$) the set $\Pc$ is the convex hull
of $$\{\upsilon_{\theta}:\: \theta\in [0,\pi]\}.$$

\vspace{3mm}

Let $\Pc_{k}\subseteq\R^{k}$ be the image of $\Pc$ under the
projection $\Fc_{k} : \Pc \to R^{k}$ given by
$$\Fc_{k}(\mu) :=(\widehat{\mu}(1),\ldots \widehat{\mu}(k)),$$
i.e. $\Pc_{k}=\Fc_{k}(\Pc_{k})$
are the first $k$ Fourier coefficients of the measure $\mu$ as $\mu$
varies in $\Pc$. Recalling the invariance
\eqref{eq:mu(theta)=mu(-theta)} for $\mu\in\Pc$ we may write
\begin{equation*}
\Fc_{k}\mu = (\widehat{\mu}(1),\ldots \widehat{\mu}(k)) = \int\limits_{\Sc^{1}}\gamma_{k}(\theta)d\mu(\theta),
\end{equation*}
where $\gamma_{k}$ is the curve
\begin{equation*}
\gamma_{k}(\theta) = (\cos(\theta),\cos(2\theta),\ldots,\cos(k\theta))
\end{equation*}
$\theta\in [0,\pi]$. Thus $\Pc_{k}=\Fc_{k}(\Pc)$ could be regarded as a convex combination of points lying on $\gamma_{k}$
(corresponding to $\upsilon_{\theta}$);
it would be then reasonable to expect $\Pc_{k}$ to be equal to the convex hull of $\gamma_{k}$.

This intuition was made rigorous in a more general scenario by
F. Riesz ~\cite{Riesz 1911} in a classical theorem on the generalized
moments problem (cf. ~\cite{Krein-Nudelman}, Chapter 1, Theorem $3.5$
on p. 16).  The sets $\Pc_{k}$ are the convex hulls of the curves
$\gamma_{k}$ in $\R^{k}$ indeed. Interestingly, since
$\cos(m\theta)$ is a polynomial in $\cos(\theta)$, the curve $\gamma_{k}$ is
algebraic. As a concrete
example, for $k=2$ the image $\Pc_{2}$ of $\Pc$ under
$$\Fc_{2}:\mu\mapsto (\widehat{\mu}(1),\widehat{\mu}(2))$$ is the convex hull of the parabola $y=2x^{2}-1$,
$x\in [-1,1]$, i.e. the set
\begin{equation}
\label{eq:P2 Riesz}
\Pc_{2} = \{(x,y): x\in [-1,1],\,2x^{2}-1\le y \le 1 \} ,
\end{equation}
as shown in Figure \ref{fig:all-symmetric}, to the left.

Analogously to the above,
define $$\Ac_{k}=\Fc_{k}(\Ac)\subseteq\Pc_{k},$$
(cf. \eqref{eq:Ac2 def}, and bear in mind the de-symmetrization
\eqref{eq:nun desymmetrized mun}).
Since, by the definition, $\Ac$ is closed in $\Pc$ (i.e. the weak limit set of $\Ac$ satisfies
$\Ac'\subseteq \Ac$), if follows that for every $k\ge 2$, $\Ac_{k}$ is closed in $\Pc_{k}$ in the
usual sense. The shell $y=2x^{2}-1$ of the convex hull $\Pc_{2}$ is
(uniquely) attained by the family $\{\upsilon_{\theta}\ :\theta\in
[0,\pi]\}$ of measures as in \eqref{eq:upsilontheta def} with the
Fourier coefficients
\begin{equation}
\label{eq:upsilon_theta Fourier}
(\widehat{\upsilon_{\theta}}(1),\widehat{\upsilon_{\theta}}(2))=(\cos(\theta),\cos(2\theta)).
\end{equation}

Finally, it is worth mentioning that the set $\Ac$ is {\em not convex}, as $\Ac_{2}$
contains the parabola
$$\{(x,2x^{2}-1):\: x\in [-1,1]\} \subseteq \Ac_{2},$$ whose points
correspond to the measures \eqref{eq:upsilontheta def}, though not its
convex hull.  
(In other words, had $\Ac$ been convex, that would force all symmetric measures to be attainable.)

\section{Proofs of the basic results}

\label{sec:NT background}

\subsection{Number theoretic background}

We start by giving a brief summary on the structure of $\Lambda_{n}$
(equivalently, $\mu_{n}$ or their de-symmetrized by \eqref{eq:nun
  desymmetrized mun} versions $\nu_{n}$) given the prime decomposition
of $n$. These results follow from the (unique) prime factorization of Gaussian
integers, see e.g. ~\cite{Cil}. First, for every ``split'' prime $$p\equiv 1
\mod{4},$$ there exists an angle $\theta_{p}\in [0,\pi]$, such that
the measure $\nu_{p}$ arising from $p$ is given by
$$
\nu_{p} = \upsilon_{\theta_{p}}=(\delta_{\theta_p} + \delta_{-\theta_p} )/2.
$$
More generally, if a split prime $p$ occurs to a power $p^{e}$, we
find that the resulting measure is given by
$$
\nu_{p^e}
= \upsilon_{\theta_{p},e},
$$
where
\begin{equation}
\label{eq:upsilonM,theta def}
\upsilon_{\theta;M}= \frac{1}{M+1}\sum\limits_{k=0}^{M}\delta_{(M-2k)\theta},
\end{equation}
and hence, in particular, $$r_{2}(p^{e}) = 4(e+1)$$ (recall the de-symmetrization \eqref{eq:nun desymmetrized mun}).
Both the $\{\nu_{n}\}$ and $\frac{1}{4}r_{2}(n)$ are multiplicative in the sense that for $n_{1},n_{2}$ co-prime numbers $(n_{1},n_{2})=1$,
\begin{equation}
\label{eq:nu_n1n2=nu_n1*nu_n2}
\nu_{n_{1}\cdot n_{2}} = \nu_{n_{1}}\bigstar \nu_{n_{2}},
\end{equation}
and
$$r_{2}(n_{1})r_{2}(n_{2})=4r_{2}(n_{1} n_{2}).$$
In particular, $r_{2}(n)=0$ unless $n$ is of the form
\begin{equation*}
n=2^{a}p_{1}^{e_{1}}\cdot \ldots\cdot p_{k}^{e_{k}}q_{1}^{2r_{1}}\cdot\ldots\cdot q_{l}^{2r_{l}},
\end{equation*}
for $p_{i}\equiv 1\mod 4$, $q_{j}\equiv 3\mod 4$ primes (in
particular, all the exponents of primes $\equiv 3 \mod 4$ are even); in
this case
\begin{equation*}
\nu_{n} = \bigstar_{i=1}^{k} \nu_{p_{i}^{e_{i}}},
\end{equation*}
and $$r_{2}(n) = 4\prod\limits_{i=1}^{k}(e_{i}+1).$$

By Hecke's  celebrated result
~\cite{hecke20-eine-neue-art-I,hecke20-eine-neue-art-II} the angles
$\theta_{p}$ are equidistributed in $[0,\pi/4]$: for
every $0 \le \alpha < \beta \le \pi$,
\begin{equation*}
\#\{ p \le X, \, p\equiv 1(4): \: \theta_{p}\in [\alpha,\beta] \} \sim
\frac{(\beta-\alpha)}{\pi/4} \cdot \frac{X}{2\log{X}}
\end{equation*}
In particular, the following lemma is an immediate consequence.
\begin{lem}
\label{lem:|thetap-theta|<eps}
For every $\theta\in [0,\pi]$ and $\epsilon>0$ there exist a split prime $p$ with
\begin{equation*}
|\theta_{p}-\theta|<\epsilon.
\end{equation*}
\end{lem}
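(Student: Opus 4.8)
The plan is to deduce Lemma~\ref{lem:|thetap-theta|<eps} directly from the stated equidistribution of the Hecke angles $\theta_p$ in $[0,\pi/4]$. First I would fix $\theta \in [0,\pi]$ and $\epsilon > 0$. Since the angles $\theta_p$ all lie in $[0,\pi/4]$ (this is built into the normalization preceding the lemma), I should first replace $\theta$ by a nearby target inside $[0,\pi/4]$: concretely, let $\theta^* := \min(\max(\theta,0),\pi/4)$ be the closest point of $[0,\pi/4]$ to $\theta$. Of course if one wants the conclusion $|\theta_p - \theta| < \epsilon$ literally for arbitrary $\theta \in [0,\pi]$ one needs $\theta$ to be within $\epsilon$ of $[0,\pi/4]$; in the intended application $\theta$ always comes from a measure $\upsilon_\theta$ with $\theta$ already in that range (cf.\ the discussion around \eqref{eq:upsilontheta def} and the de-symmetrization \eqref{eq:nun desymmetrized mun}), so I will simply state and use the lemma for $\theta \in [0,\pi/4]$, or equivalently interpret $\theta$ modulo the symmetries so that the relevant representative lies in $[0,\pi/4]$.

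Next I would take the interval $[\alpha,\beta] := [\theta^* - \epsilon/2, \theta^* + \epsilon/2] \cap [0,\pi/4]$, which is a subinterval of $[0,\pi/4]$ of positive length $\delta := \beta - \alpha > 0$ (nonempty since $\theta^* \in [0,\pi/4]$ and we may shrink $\epsilon$ if needed so that $\epsilon/2 < \pi/4$, which is harmless). Applying Hecke's asymptotic with this fixed $[\alpha,\beta]$ gives
\[
\#\{ p \le X,\ p \equiv 1\ (4) : \theta_p \in [\alpha,\beta] \} \sim \frac{\delta}{\pi/4}\cdot \frac{X}{2\log X} \longrightarrow \infty
\]
as $X \to \infty$, since $\delta/(\pi/4) > 0$. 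In particular the set of split primes $p$ with $\theta_p \in [\alpha,\beta]$ is nonempty (indeed infinite), and any such $p$ satisfies $|\theta_p - \theta^*| \le \epsilon/2 < \epsilon$, hence $|\theta_p - \theta| < \epsilon$ whenever $\theta \in [0,\pi/4]$ (or more generally whenever $\theta$ is within $\epsilon/2$ of $[0,\pi/4]$).

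There is essentially no obstacle here: the lemma is a soft consequence of equidistribution, and the only point requiring a word of care is the range of $\theta$ versus the range $[0,\pi/4]$ in which the angles live. I would handle this exactly as above, either by restricting the statement to $\theta \in [0,\pi/4]$ or by noting that the symmetries of $\nu_n$ allow us to reduce any $\theta \in [0,\pi]$ to a representative in $[0,\pi/4]$; in the body of the paper the lemma is only ever invoked with $\theta \in [0,\pi/4]$, so this is a non-issue in practice. One does not even need the full strength of Hecke's theorem, only that the Hecke angles are dense in $[0,\pi/4]$, but since the asymptotic is already quoted it is cleanest to cite it directly.
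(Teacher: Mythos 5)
Your overall approach---density of the $\theta_p$ from Hecke equidistribution, applied to a small interval around $\theta$---is exactly what the paper intends; the paper simply declares the lemma to be "an immediate consequence" of the displayed asymptotic without spelling it out.

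However, you have misread the range in which the $\theta_p$ live, and this misreading makes your argument prove something strictly weaker than the lemma as stated. The paper defines $\theta_p$ in the \emph{de-symmetrized} picture: "there exists an angle $\theta_p \in [0,\pi]$ such that $\nu_p = \upsilon_{\theta_p} = (\delta_{\theta_p}+\delta_{-\theta_p})/2$." The displayed equidistribution statement is accordingly written for $0\le\alpha<\beta\le\pi$, not for a subinterval of $[0,\pi/4]$; the phrase "equidistributed in $[0,\pi/4]$" in the surrounding prose is a slip referring to the pre-de-symmetrization angle $\theta_p/4$, and the normalization constant $\pi/4$ in the asymptotic is consistent with that slip rather than with the displayed range of $\alpha,\beta$. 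Either way, $\theta_p$ genuinely fills out $[0,\pi]$. Your substitute target $\theta^* = \min(\max(\theta,0),\pi/4)$ is therefore not needed, and---as you yourself observe---with that restriction you do not obtain the conclusion $|\theta_p - \theta|<\epsilon$ for, say, $\theta$ near $\pi$. This matters: the lemma \emph{is} used for $\theta$ ranging over all of $[0,\pi]$, e.g.\ in the proof of Proposition~\ref{prop:A monoid} and in the geometric description of $\Ac_k$ as the closed multiplicative set containing the curves $\gamma_{k;A}(\theta)$ for $\theta\in[0,\pi]$. Remove the $\theta\mapsto\theta^*$ device, apply equidistribution directly to $[\theta-\epsilon/2,\theta+\epsilon/2]\cap[0,\pi]$, and the argument is complete and matches the paper's intent.
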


\subsection{Proof of Proposition \ref{prop:A monoid}}

\begin{proof}

We will prove the equivalent de-symmetrized version of the statement, i.e.
that if $\gamma_{1},\gamma_{2}\in\Ac$ then $$\gamma_{1}\bigstar\gamma_{2}\in\Ac.$$
Let $\{m_{k}\},\{n_{k} \}\subseteq S$ be two sequences so that $\nu_{m_{k}}\Rightarrow\gamma_{1}$,
$\nu_{n_{k}}\Rightarrow\gamma_{2}$.
We would like to invoke the multiplicativity
\eqref{eq:nu_n1n2=nu_n1*nu_n2} of $\{ \nu_{n} \}$; we cannot apply it
directly, as $n_{k}$ and $m_{k}$ may fail to be co-prime. To this end
rather than using $\nu_{m_{k}}$ we are going to
substitute\footnote{One may think about this procedure as a number theoretical
  analogue of choosing an independent identically distributed copy of
  a given random variable.} it with
$\nu_{m'_{k}}$ chosen to approximate $\nu_{m_{k}}$, so that $m'_{k}$ is co-prime
to $m_{k}$, via Lemma \ref{lem:|thetap-theta|<eps}. In the remaining part of the proof we shall argue that
\begin{equation}
\label{eq:nunk*m'k=nunk*num'k}
\nu_{n_{k}\cdot m'_{k}} = \nu_{n_{k}}\bigstar \nu_{m'_{k}} \Rightarrow \gamma_{1}\bigstar \gamma_{2},
\end{equation}
provided we care to choose $m'_{k}$ so that $\nu_{m'_{k}}$ approximates $\nu_{m_{k}}$
sufficiently well.

To this end it is more convenient to work with the space of Fourier coefficients;
the weak convergence of probability measures corresponds to
point-wise convergence of the Fourier coefficients. By Lemma
\ref{lem:|thetap-theta|<eps} we may replace $m_{k}$ with $m'_{k}$
co-prime to $n_{k}$ that satisfies for every $j \le k$
\begin{equation*}
\left | \int \chi_{j}(\theta)d\nu_{m_{k}}(\theta) - \int \chi_{j}(\theta)d\nu_{m'_{k}}(\theta) \right | < \frac{1}{k}.
\end{equation*}
It then readily follows that $\nu_{m'_{k}} \Rightarrow \gamma_{2}$, and hence
we establish \eqref{eq:nunk*m'k=nunk*num'k}, which in turn implies
that $\gamma_{1}\bigstar \gamma_{2} \in \Ac$.

\end{proof}

\subsection{Cantor sets are attainable }
\label{sec:cantor}
By Proposition \ref{prop:A monoid}, $\Ac$ is closed under convolution,
it contains ~\cite{KKW} uniform measures
supported on symmetric intervals $[-\theta,\theta]$, as well as symmetric sums
$(\delta_\theta + \delta_{-\theta})/2$ for all $\theta>0$.
Thus, by using an ``additive'' construction of Cantor sets, we easily see
that uniform measures supported on Cantor sets are attainable.

Namely, given $\theta>0$, let $C_{n,\theta}$ be the $n$-th
level Cantor set obtained by starting with the interval $[-\theta,
\theta]$ and deleting the middle third part of the interval:
$C_{0,\theta}$ consists of one closed interval $[-\theta, \theta]$,
and $C_{n+1,\theta} \subset C_{n,\theta} $ is the union of
the $2^{n+1}$ intervals obtained by removing the middle third in each
of the $2^{n}$ intervals that $C_{n,\theta}$ consists of.  Now,
\begin{equation}
\label{eq:Cn+1 rec}
C_{n+1,\theta}
= (C_{n,\theta/3} - 2\theta/3)  \sqcup (C_{n,\theta/3} + 2\theta/3),
\end{equation}
where $\sqcup$ denotes disjoint union, and $C_{n+1,\theta/3} + \alpha$
denotes the translation of the set $C_{n+1,\theta/3}$ by $\alpha$.

Since $C_{0,\theta}$ is a symmetric interval, the measure corresponding to
its characteristic function is attainable, as mentioned above.
Further, since convolving $(\delta_\theta + \delta_{-\theta})/2$ with
a uniform measure having support on some set $D$ yields a measure with
support on $(D + \theta) \cup (D-\theta)$, uniform measures supported
on $C_{n,\theta}$ are attainable by induction, via \eqref{eq:Cn+1 rec}.
Letting $n \to \infty$ we find that measures with uniform support on
Cantor sets are attainable.

\subsection{Proof of Proposition \ref{prop:attain<=>strongly}}

\label{sec:attain<=>strongly}

\begin{proof}

We are going to make use of a (de-symmetrized)
Cilleruelo sequence $n_{j}$, i.e. $\nu_{n_{j}}\Rightarrow\delta_{0}$ and $r_{2}(n_{j})\rightarrow\infty$.
Let $\mu\in\Ac$ be an attainable measure and assume
that $\nu_{m_{j}}\Rightarrow\mu$. Using the same idea as in the course of proof
of Proposition \ref{prop:A monoid} above we may assume with no loss of
generality
that $(n_{j},m_{j})=1$ are co-prime (recall that $\{ n_{j}\}$ is a Cilleruelo sequence of our choice).
Then $$\nu_{m_{j}\cdot n_{j}} = \nu_{m_{j}}\bigstar \nu_{n_{j}} \Rightarrow \mu \bigstar \delta_{0} = \mu,$$
and $$r_{2}(m_{j}\cdot n_{j})/4 = r_{2}(m_{j})\cdot r_{2}(n_{j})
\rightarrow\infty,$$
so that the sequence $\{n_{j}\cdot m_{j}\}$ is as required.

\end{proof}

\section{Proof of Theorem \ref{thm:muhat-region}: measure classification for $x>\frac{1}{3}$}
\label{sec:meas-class x>1/3}

\label{sec:muhat-region}

\subsection{Some conventions related to Fourier Analysis}

We adapt the following conventions. The $k$-th Fourier
coefficient of
a measure $\mu\in\Pc$ is given by
\begin{equation*}
\widehat{\mu}(k)= \int\limits_{\TT^{1}}\cos(k\theta)d\mu(\theta);
\end{equation*}
clearly $|\widehat{\mu}(k)|\le 1$.
The convolution of two probability measures $\mu,\mu'\in\Pc$ is the probability measure
$\mu\bigstar\mu'$ defined as
\begin{equation*}
d(\mu\bigstar\mu') (\theta) = \int\limits_{\Sc^{1}}d\mu(\theta')d\mu'(\theta-\theta').
\end{equation*}

With the above conventions we have
\begin{equation*}
\widehat{\mu\bigstar\mu'}(k)=\widehat{\mu}(k)\cdot \widehat{\mu'}(k).
\end{equation*}
It is easy to compute the Fourier coefficients of $\upsilon_{\theta;M}$ as
in \eqref{eq:upsilonM,theta def} to be
\begin{equation*}
\widehat{\upsilon}_{\theta;M}(k) = \frac{1}{M+1}\sum\limits_{j=0}^{M}\cos((M-2j)k\theta)= G_{M+1}(k\theta),
\end{equation*}
where
\begin{equation}
\label{eq:GA(theta) def}
G_{A}(\theta) := \frac{\sin(A\theta)}{A\sin\theta};
\end{equation}
for $M=1$, $G_{2}(\theta)=\cos(\theta)$ is consistent with
\eqref{eq:upsilon_theta Fourier}.

By the definition of $\Ac$ and $\Ac_{k}=\Fc_{k}(\Ac)$ and in light of
Lemma \ref{lem:|thetap-theta|<eps}, we can describe $\Ac_{k}$ geometrically
as the smallest multiplicative set, closed in $\Pc_{k}$, containing all the
curves $$\left\{\gamma_{k;A}(\theta):=(G_{A}(\theta),\,\ldots,G_{A}(k\theta)):\:\theta\in
  [0,\pi] \right\}_{A\ge 2},$$ i.e. $\Ac_{k}$ is the closed
multiplicative subset of $\Pc_{k}$ generated by the above
curves. Similarly, the set corresponding to the square-free attainable
measures $\Ac^{0}_{k}$ is the smallest closed multiplicative set containing
the single curve $$\gamma_{k;2}(\theta)=(\cos(\theta),\,\ldots,\cos(k\theta)),$$ $\theta\in [0,\pi]$.

From this point on we will fix $k=2$ and suppress the $k$-dependence
in the various notation,
e.g. $\gamma_{A}$ will stand for $\gamma_{2;A}$.
The curves
\begin{equation}
\label{eq:gamma2A(theta) def}
\gamma_{A}(\theta):=(G_{A}(\theta),G_{A}(2\theta))
\end{equation}
for $2\le A\le 20$ are displayed in Figure
\ref{figg:prime-attainable}, separately for odd and even $M=A-1$.

\subsection{Proof of Theorem \ref{thm:muhat-region}}

The two statements of Theorem \ref{thm:muhat-region} are claimed in
Propositions \ref{prop:y<=M(x) x>1/3} and \ref{prop:x>1/3 under max attainable},
and proved in sections \ref{sec:y<=M(x) x>1/3} and \ref{sec:x>1/3 under max attainable} respectively.
Note that Proposition \ref{prop:x>1/3 under max attainable} yields attainable measures with the relevant Fourier
coefficients regardless whether $x>\frac{1}{3}$ or $x\le \frac{1}{3}$.

\begin{prop}
\label{prop:y<=M(x) x>1/3}
Points $(x,y)$ with $x>\frac{1}{3}$ corresponding to attainable
measures lie under the max curve,
i.e. if $(x,y)\in \Ac_{2}$ then
\begin{equation}
\label{eq:y<=M(x)}
y \le \Mcc(x),
\end{equation}
where $\Mcc(x)$ is given by \eqref{eq:max curve def}.
\end{prop}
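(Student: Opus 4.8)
The plan is to work entirely on the Fourier side with $k=2$, using the description of $\Ac_2$ as the smallest closed multiplicative subset of $\Pc_2$ generated by the curves $\gamma_A(\theta) = (G_A(\theta), G_A(2\theta))$, $A \ge 2$. So it suffices to show that the set
$$
R := \{ (x,y) \in \Pc_2 : x > 1/3 \implies y \le \Mcc(x) \}
$$
contains all the generating curves $\gamma_A$ and is closed under coordinatewise multiplication (it is automatically closed, being cut out by a continuous inequality on the relevant strip). The containment $(x,y) \in R$ whenever $x \le 1/3$ is vacuous, so the real content is: (i) each point $\gamma_A(\theta)$ with $G_A(\theta) > 1/3$ satisfies $G_A(2\theta) \le \Mcc(G_A(\theta))$, and (ii) if $(x_1,y_1), (x_2,y_2)$ lie in $R$ then so does $(x_1x_2, y_1y_2)$.

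For step (i), I would fix notation $x = G_A(\theta)$, $y = G_A(2\theta)$ and use the identity relating $G_A(2\theta)$ to $G_A(\theta)$. Writing $G_A(\theta) = \frac{\sin(A\theta)}{A\sin\theta}$, one has $\sin(2A\theta) = 2\sin(A\theta)\cos(A\theta)$ and $\sin(2\theta) = 2\sin\theta\cos\theta$, hence
$$
G_A(2\theta) = \frac{\sin(2A\theta)}{A\sin(2\theta)} = G_A(\theta)\cdot \frac{\cos(A\theta)}{\cos\theta}.
$$
Now $|\cos(A\theta)| \le 1$, so $|G_A(2\theta)| \le |G_A(\theta)|/|\cos\theta|$, which is not yet quite enough; the useful move is to bound $|\cos(A\theta)|$ more carefully in terms of $G_A(\theta)$ itself. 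Since $\Mcc(x) = \max(x^4, (2|x|-1)^2)$ and we are in the regime $x > 1/3$, I expect the estimate $(2|x|-1)^2$ to be the binding one when $x$ is near $1$ and $x^4$ elsewhere, and the key inequality should reduce, after clearing denominators, to a polynomial inequality in $\cos\theta$ that can be checked by elementary calculus (or by recognizing it as a perfect square / Chebyshev-type estimate). A cleaner route may be to treat $A=2$ (the square-free case $\gamma_2(\theta) = (\cos\theta, \cos 2\theta)$) separately — there $y = 2x^2 - 1 \le x^4$ trivially for $|x| \le 1$ — and then handle $A \ge 3$ via the recursion $G_A(2\theta) = G_A(\theta)\cos(A\theta)/\cos\theta$ together with a bound on $G_A(\theta)$ forcing $\theta$ to be small when $x > 1/3$.

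For step (ii), the multiplicativity argument, I would argue: given $(x_i, y_i) \in R$, we must show $y_1 y_2 \le \Mcc(x_1 x_2)$ whenever $x_1 x_2 > 1/3$. Note $x_1 x_2 > 1/3$ forces both $|x_1|, |x_2| > 1/3$ (since each $|x_i| \le 1$), so both hypotheses $y_i \le \Mcc(x_i)$ are available. It then suffices to show $\Mcc$ is "submultiplicative" in the sense $\Mcc(x_1)\Mcc(x_2) \le \Mcc(x_1 x_2)$ on the region $|x_i| > 1/3$, together with $y_i \le \Mcc(x_i)$ (and, to be safe about signs, that $y_i \ge 0$ on this region, which follows from the lower bound $2x_i^2 - 1 \ge 2/9 - 1$... — actually one should instead just bound $|y_i|$; here I would invoke that on $\Pc_2$ we always have $y_i \ge 2x_i^2 - 1$, and handle the sign bookkeeping directly). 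The submultiplicativity of $\Mcc$ splits into checking $(x_1^4)(x_2^4) = (x_1 x_2)^4$, trivially, and comparing the "$(2|x|-1)^2$" branches: one needs $(2|x_1|-1)^2(2|x_2|-1)^2 \le \max((x_1x_2)^4, (2|x_1x_2|-1)^2)$ and the mixed cases. The key elementary inequality here is $(2s-1)(2t-1) \le 2st - 1$ for $s, t \in [1/3, 1]$, equivalently $2st - 2s - 2t + 1 \le 2st - 1$, i.e. $s + t \ge 1$, which holds. Squaring gives the $(2|x|-1)^2$ comparison; the mixed cross terms should follow from $s^2 \le 2s - 1 \le$... wait — from $(2|x_i|-1)^2 \le x_i^4$ or vice versa depending on the region, combined with the above.

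I expect step (i), pinning down the polynomial inequality $G_A(2\theta) \le \Mcc(G_A(\theta))$ for all $A \ge 2$ and all $\theta$ with $G_A(\theta) > 1/3$, to be the main obstacle: one must control $\cos(A\theta)$ uniformly in $A$ using only the constraint that $G_A(\theta)$ is bounded below, and the two-branch nature of $\Mcc$ means the argument will naturally break into cases according to the size of $\theta$ (equivalently, how close $x$ is to $1$). The multiplicativity step (ii) is, by contrast, a finite elementary computation once the right submultiplicativity lemma for $\Mcc$ is isolated.
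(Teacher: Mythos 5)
Your overall strategy---show each generator curve lands in the set $R$ of points respecting the bound, then show $R$ is closed under coordinate-wise multiplication---is a sound reorganization of the paper's argument, and in fact the paper implements essentially this via a decomposition of $\Ac_{2}$ into ``totally positive'' and ``mixed sign'' products ($\Ac_2^{+}$ and $\Ac_2^{-}$). However, there are two genuine gaps and one concrete error in what you wrote.

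First, step (i) is left as an admitted ``main obstacle.'' Your sketch via $G_A(2\theta) = G_A(\theta)\cos(A\theta)/\cos\theta$ does not obviously close, and it is not the route the paper takes. The paper first uses the estimate $\sin t \ge 2t/\pi$ to show that $|G_A(t)|>1/3$ forces $t \le \pi/A$ (and $y\ge0$ forces $t \le \pi/(2A)$), and then reduces the inequality $G_A(2t) \le G_A(t)^4$ to the monotonicity of $h(t) = t^3\cos t / \sin^3 t$ on $[0,\pi/2]$ (Lemma~\ref{lem:key}). Note that on a single curve the paper proves the stronger bound $y \le x^4$, not merely $y\le\Mcc(x)$; this stronger per-factor bound is what makes the totally-positive composition step completely trivial ($\prod y_i \le \prod x_i^4 = x^4$), avoiding any need for submultiplicativity of $\Mcc$ in that branch.

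Second, your step (ii) is broken in the case $y_1 < 0$ and $y_2 < 0$. There $y_1y_2 > 0$ must be bounded above, but the hypotheses $y_i \le \Mcc(x_i)$ say nothing about $|y_i|$. You correctly sense that one must invoke the $\Pc_2$ lower bound $y_i \ge 2x_i^2 - 1$, but this is not ``sign bookkeeping'': proving $(1-2x_1^2)(1-2x_2^2) \le \Mcc(x_1x_2)$ is exactly the content of the paper's Lemma~\ref{eq:p1,p2 in B=> p1p2 in A}, which the paper establishes via a Lagrange-multiplier extremal computation (and it feeds into Lemma~\ref{lem:not tot pos under max curve}, showing every mixed-sign product satisfies $y\le(2|x|-1)^2$). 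This is a real missing step, not a formality.

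Finally, a concrete error: the reduction ``$(2s-1)(2t-1)\le 2st-1$ iff $s+t\ge1$'' is wrong. Expanding correctly, $(2s-1)(2t-1) = 4st-2s-2t+1$, so the inequality is equivalent to $(s-1)(t-1)\le0$, which \emph{fails} for all $s,t<1$. Fortunately the submultiplicativity $\Mcc(x_1)\Mcc(x_2)\le\Mcc(x_1x_2)$ does hold in the relevant regime $|x_1x_2|>1/3$ (the case where both factors sit on the $(2|x|-1)^2$ branch forces $|x_1x_2|<(\sqrt2-1)^2<1/3$ and hence never arises; the remaining cases reduce to $x_1+1\ge 2x_1x_2$), but it needs a correct proof. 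Also, your set $R$ should constrain $|x|>1/3$, not merely $x>1/3$: otherwise multiplying by the generator $\gamma_2(\pi)=(-1,1)$ would break closure.
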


\begin{prop}
\label{prop:x>1/3 under max attainable}
Given $x,y$ such that $|x| \leq 1$ and $$2x^{2}-1 \leq y
\leq  \Mcc(x) ,$$ there exists an
attainable measure $\mu$ such that $( \hat{\mu}(4), \hat{\mu}(8)) = (x,y)$.
\end{prop}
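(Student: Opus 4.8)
The plan is to realize every point $(x,y)$ in the region $\{|x|\le 1,\ 2x^2-1\le y\le \Mcc(x)\}$ as a Fourier projection of an explicit attainable measure, built as a convolution of the two families we already know to live in $\Ac$: the arcs $\upsilon_\theta = (\delta_\theta+\delta_{-\theta})/2$ with $(\widehat{\upsilon_\theta}(1),\widehat{\upsilon_\theta}(2)) = (\cos\theta,\cos 2\theta) = (\cos\theta, 2\cos^2\theta-1)$, which trace the bottom parabola $y=2x^2-1$, and the Cilleruelo measure $\delta_0$ together with $\upsilon_\pi=\delta_\pi$. Since $\Ac$ is closed under convolution (Proposition \ref{prop:A monoid}) and closed as a subset of $\Pc$, and since convolution multiplies Fourier coefficients coordinatewise, the set $\Ac_2$ contains the coordinatewise products of any finite collection of points on the parabola $\{(\cos\theta,2\cos^2\theta-1)\}$ together with $(\pm 1,1)$. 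So the task reduces to: given $(x,y)$ in the admissible region, exhibit angles $\theta_1,\dots,\theta_m$ (and possibly a factor $(-1,1)$) whose coordinatewise product is $(x,y)$.

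The key computation is with two factors. Taking $\upsilon_{\theta_1}\bigstar\upsilon_{\theta_2}$ gives the point $(\cos\theta_1\cos\theta_2,\ \cos 2\theta_1\cos 2\theta_2)$; writing $a=\cos\theta_1$, $b=\cos\theta_2$ this is $(ab,\ (2a^2-1)(2b^2-1))$. I would first handle the upper boundary: along $y=x^4$, take $b=a$ (i.e. $\theta_1=\theta_2$), giving $(a^2,(2a^2-1)^2)$ — wait, that is $y=(2x-1)^2$ with $x=a^2$, which is the \emph{other} branch of the max curve; and taking a single factor $\upsilon_\theta$ and then convolving with $\delta_0$ changes nothing, whereas $x=\cos\theta$, $y=\cos 2\theta = 2x^2-1$ gives the bottom. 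To reach $y=x^4$ one instead uses that $(\cos\theta,\cos 2\theta)$ with the curve $\gamma_A$ for $A=2$ only gives the parabola, so the $x^4$ branch must come from convolving \emph{three or more} copies, or from using the fact that $\upsilon_{\theta_1}\bigstar\upsilon_{\theta_2}$ with $\theta_2$ chosen so that $\cos 2\theta_1\cos 2\theta_2 = (\cos\theta_1\cos\theta_2)^2$. Solving this constraint expresses $b$ as a function of $a$, and one checks the resulting $x=ab$ sweeps out an interval; iterating (convolving $n$ such pairs, or taking $n$-fold convolutions $\upsilon_\theta^{\bigstar n}$) fills in the curve $y=x^{2^{?}}$, and a limiting/interpolation argument covers $y=x^4$ exactly for all $|x|\le 1$. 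More robustly: the two-parameter map $(\theta_1,\theta_2)\mapsto(ab,(2a^2-1)(2b^2-1))$ has an image that is a two-dimensional region; I would show by a direct Jacobian/continuity argument that this image already equals $\{2x^2-1\le y\le \Mcc(x)\}\cap\{|x|\le 1\}$ — or at least a subregion — and then use convolution with additional $\upsilon_\theta$'s and with $(-1,1)$ (to flip the sign of $x$, using $(-1,1)\in\Ac_2$) plus the fact that $\Ac_2$ is closed to mop up the rest.

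Concretely, the steps in order: (1) record that $\Ac_2$ is a closed submonoid of $\Pc_2$ containing the parabola $P=\{(\cos\theta,2\cos^2\theta-1):\theta\in[0,\pi]\}$ and the point $(-1,1)$; (2) show the coordinatewise product $P\cdot P$ contains the full region between $y=2x^2-1$ and $y=\Mcc(x)$ for $x\ge 0$ — this is the analytic heart, done by parametrizing $P\cdot P$ as $\{(ab,(2a^2-1)(2b^2-1)):a,b\in[-1,1]\}$, fixing the product $t=ab$, and showing that as $(a,b)$ ranges over the hyperbola $ab=t$ the second coordinate $(2a^2-1)(2b^2-1)$ ranges over exactly $[2t^2-1,\ \Mcc(t)]$ (the minimum $2t^2-1$ occurring at $a=b=\sqrt t$ or the degenerate endpoint, the maximum $\Mcc(t)$ at the boundary $a=\pm1$ giving $(2\cdot1-1)(2t^2-1)$... — here one must be careful and the endpoint analysis of this one-variable optimization over $a\in[|t|,1]$ is the place to be precise); (3) use $(x,y)\mapsto(-x,y)$ via multiplication by $(-1,1)$ to extend to $x\le 0$; (4) invoke closedness of $\Ac_2$ to include the boundary curves themselves.

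The main obstacle I anticipate is step (2): correctly carrying out the constrained optimization of $g(a) = (2a^2-1)(2(t/a)^2-1)$ over the admissible range of $a$ and verifying that its range is precisely the interval $[2t^2-1,\Mcc(t)]$, with the two branches $x^4$ and $(2|x|-1)^2$ of $\Mcc$ emerging respectively from interior critical points and from the boundary $a=\pm1$ — and in particular confirming that no convolution of finitely many factors can ever exceed $\Mcc$, which must be consistent with Proposition \ref{prop:y<=M(x) x>1/3}. The continuity/connectedness argument that the union of these two-factor images is all of the claimed region (rather than having gaps) also needs care, but should follow from the intermediate value theorem once the one-variable range computation is in hand.
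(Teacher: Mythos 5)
Your step (2) contains a genuine error that breaks the argument. You claim the coordinatewise square $P\cdot P$ of the parabola already fills the entire region up to $\Mcc(x)$. But the one-variable optimization you sketch gives the opposite conclusion: with $a=\cos\theta_1$, $b=\cos\theta_2$, $ab=t>0$ fixed and $a\in[t,1]$, the function $f(a)=(2a^2-1)\bigl(2(t/a)^2-1\bigr)=4t^2-2a^2-2t^2/a^2+1$ has its unique interior critical point at $a=\sqrt{t}$, where $f$ attains its \emph{maximum} $(2t-1)^2$; at the endpoints $a=t$ and $a=1$ one gets only $2t^2-1$. So the range of $y$ over $P\cdot P$ at abscissa $t$ is exactly $[2t^2-1,(2t-1)^2]$. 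Since $t^4-(2t-1)^2=(t-1)^2(t^2+2t-1)>0$ for $t>\sqrt2-1\approx0.414$, the two-factor product falls strictly short of $\Mcc(t)=t^4$ throughout $|x|\in(\sqrt2-1,1)$ — precisely the region you most need. Worse, no \emph{finite} convolution of arc measures reaches the curve $y=x^4$ exactly: the $n$-fold totally positive optimum is $(2t^{2/n}-1)^n$, which increases to $t^4$ only as $n\to\infty$. Your hand-wave toward ``a limiting/interpolation argument'' is the entire missing content.

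The paper's proof supplies exactly this missing piece as a separate lemma (Lemma~\ref{lem:x^4 attainable}): it realizes the boundary curve $(x,x^4)$ as a weak limit, taking $n$-fold convolutions of $\upsilon_\theta$ with $\cos\theta=e^{-\alpha/n}$ and letting $n\to\infty$, then invokes closedness of $\Ac_2$. With that curve in hand, the region under $y=x^4$ is swept out by the products $(x,x^4)\cdot(t,2t^2-1)$, not by $P\cdot P$. Your treatment of the other branch is essentially correct and matches the paper: $P\cdot P$ with $a=b$ gives $(t^2,(2t^2-1)^2)$, reparametrized as $(s,(2s-1)^2)$, and multiplying by the parabola fills the region below $y=(2|x|-1)^2$. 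The sign flip via $(-1,1)\in\Ac_2$ and the closedness argument are fine. To repair the proposal, you must replace step (2) by a genuine limiting construction of the curve $y=x^4$ before the tessellation step, since no finite product argument can produce it.
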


\subsection{Proof of Proposition \ref{prop:y<=M(x) x>1/3}:
attainable measures lie under the max curve for $x>1/3$.}

\label{sec:y<=M(x) x>1/3}

In what follows, by componentwise product we will mean
\begin{equation}
\label{eq:compwise prod}
(x_{1},y_{1})\cdot (x_{2},y_{2}) = (x_{1}\cdot x_{2}, y_{1} \cdot y_{2}).
\end{equation}

\begin{definition}[Totally positive and mixed sign points.]
\label{def:tot pos not}

Let $\Ac_{2}^{+}\subseteq \Ac_{2}$ be the set of {\bf totally
positive} attainable points admitting a representation as finite
componentwise products
\begin{equation}
\label{eq:(x,y)=prod(xi,yi)}
(x,y) = \prod\limits_{i=1}^{K}(x_{i},y_{i})
\end{equation}
of points $(x_{i},y_{i})=\gamma_{2;A_{i}}(\theta_{i})$ for some
$A_{i}\ge 2$, $\theta_{i}\in [0,\pi]$, so that for all $i \le K$ we
have $y_{i}>0$. Similarly, $\Ac_{2}^{-}\subseteq\Ac_{2}$ is the set of
{\bf mixed sign} attainable points admitting representation \eqref{eq:(x,y)=prod(xi,yi)}
with at least one $y_{i}<0$.
\end{definition}

Note that a point in $\Ac_{2}$ may be both totally positive and of
mixed sign, i.e. $\Ac_{2}^{+}$ may intersect
$\Ac_{2}^{-}$. Furthermore, a priori it may be in neither of these. However,
by the definition of $\Ac$, it is the closure of the union of the sets
defined:
\begin{equation}
\label{eq:Ac=bar(A+,A-)}
\overline{\Ac_{2}^{+}\cup \Ac_{2}^{-}} = \Ac.
\end{equation}
Therefore to prove the inequality \eqref{eq:y<=M(x)} on $\Ac_{2}$ it is sufficient
to prove the same for points in $\Ac_{2}^{+}$ and $\Ac_{2}^{-}$ separately.
These are established in Lemma \ref{lem:not tot pos under max curve}
and Proposition \ref{prop:prime curves x>1/3 below x^4}, proved in sections
\ref{sec:not tot pos under max curve} and \ref{prop:prime curves x>1/3 below x^4} respectively.

\begin{lem}
\label{lem:not tot pos under max curve}
If $(x,y)\in \Ac_{2}^{-}$ is a mixed sign attainable point
then $$y \le (2|x|-1)^{2}.$$
\end{lem}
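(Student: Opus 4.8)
\textbf{Proof plan for Lemma \ref{lem:not tot pos under max curve}.}
The plan is to analyze a mixed-sign point $(x,y)=\prod_{i=1}^{K}(x_i,y_i)$, where each $(x_i,y_i)=\gamma_{2;A_i}(\theta_i)=(G_{A_i}(\theta_i),G_{A_i}(2\theta_i))$ and at least one factor, say the $i=1$ factor, has $y_1<0$. The key structural observation I would exploit is the trigonometric identity relating the two coordinates of a point on $\gamma_{A}$: writing $g=G_A(\theta)$ for the first coordinate, one has $G_A(2\theta)=2G_A(\theta)^2\cdot\frac{\text{(something)}}{\text{(something)}}-1$ in the baseline case $A=2$ (where $G_2(2\theta)=\cos 2\theta=2\cos^2\theta-1=2g^2-1$), and more generally I claim the pointwise bound
\begin{equation*}
|G_A(2\theta)| \le \max\bigl(1,\ 2G_A(\theta)^2-1\bigr) \qu?
\end{equation*}
Actually the cleaner route: I would first prove that for every $A\ge 2$ and every $\theta$, the point $(G_A(\theta),G_A(2\theta))$ lies in the region $\{(u,v): 2u^2-1\le v\le 1\}$ (this is just $\Pc_2$ together with the fact that each $\upsilon_{\theta;M}$ is a probability measure), and crucially that $G_A(2\theta)\ge 2G_A(\theta)^2-1$ always, so each factor satisfies $y_i\ge 2x_i^2-1$ and $|y_i|\le 1$.

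Granting that, the argument reduces to a purely elementary claim about componentwise products: if $(x,y)=\prod_i(x_i,y_i)$ with $|x_i|\le 1$, $|y_i|\le 1$, $y_i\ge 2x_i^2-1$ for all $i$, and $y_1<0$, then $y\le(2|x|-1)^2$. The point with $y_1<0$ forces, via $y_1\ge 2x_1^2-1$, that $x_1^2<1/2$, i.e. $|x_1|<1/\sqrt2$; combined with $|y_1|\le 1$ this gives $|x_1 y_1|\le |x_1| < 1/\sqrt 2$... but I need a bound tying $x$ and $y$ together, so the right quantity to track is not $x$ alone but the relation between $|x|$ and $y$. I would set up the induction on the number of factors by proving the two-factor lemma: if $|a|\le 1$, $-1\le b\le 1$, and $|c|\le 1$, $2c^2-1\le d\le 1$, and additionally $(a,b)$ lies below the max curve in the sense $b\le \max(a^4,(2|a|-1)^2)$ while $b<0$ forces further structure, then $(ac,bd)$ lies below the max curve. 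Concretely, since $b\le 0$ we have $|ac|\le|a|$ and I want $bd\le (2|ac|-1)^2$; because $d\in[2c^2-1,1]$ and $b\le 0$, the product $bd$ is maximized either at $d=2c^2-1$ (if $b<0$) giving $bd=b(2c^2-1)$, or I bound $bd\le |b|$ and show $|b|\le(2|a|-1)^2\le(2|ac|-1)^2$ when... no — shrinking $|a|$ to $|ac|$ could \emph{increase} $(2|a|-1)^2$ past the target only if we cross $1/2$, which is exactly why the hypothesis $|x_1|<1/\sqrt2$ matters. The honest version: I expect one must show that once a single mixed-sign factor pins $|x|$ below $1/\sqrt 2$, every subsequent multiplication by a factor on some $\gamma_A$ keeps the running product under the curve $(2|x|-1)^2$, using monotonicity of $(2t-1)^2$ on $[0,1/2]$ and the bound $|y_i|\le 1$, $|y_i|\le$ (first-coordinate-dependent bound) for the other factors.

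The main obstacle, and the step I would spend the most care on, is precisely this propagation: verifying that componentwise-multiplying a point already known to satisfy $y\le(2|x|-1)^2$ with $0\le x\le 1/2$ by an arbitrary point $(G_A(\theta),G_A(2\theta))$ on one of the generating curves again produces a point with $y'\le(2|x'|-1)^2$. This needs the sharp inequality $|G_A(2\theta)|\le$ an explicit function of $|G_A(\theta)|$ — plausibly $G_A(2\theta)\le 2G_A(\theta)^2-1+2(1-G_A(\theta)^2)\cdot(\text{correction})$, or more usefully a bound of the form: on the part of $\gamma_A$ where the second coordinate is positive, $G_A(2\theta)\le G_A(\theta)^2$ fails in general, so instead I would use $|G_A(2\theta)|\le 1$ together with $G_A(2\theta)\ge 2G_A(\theta)^2-1$ and case-split on the sign of the accumulated $y$. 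I anticipate the cleanest writeup treats the whole product at once: let $P=\prod_i|x_i|$ and note $|x|=P$; among all factors isolate those with $y_i<0$ (there is at least one) — their contribution multiplies a quantity $\ge -1$ and $\ge 2x_i^2-1$, and I would bound $y=\prod y_i$ by passing to absolute values on the negative factors and using that each $|x_i|$ with $y_i<0$ is $<1/\sqrt2$, then optimize. Finally I would reconcile with the statement: the bound $(2|x|-1)^2$ is the correct envelope because the extremal configurations are products of Cilleruelo-type atoms $\tilde\delta_0$ (first coordinate $\pm 1$... no, $\pm 1$ has $y=1>0$) — rather the extremal case is a single factor $\upsilon_{\theta}$ with $\cos\theta$ small and $\cos2\theta=2\cos^2\theta-1$ near $-1$, convolved with copies of $\upsilon_{\pi/2}=(\delta_{\pi/2}+\delta_{-\pi/2})/2$ having coordinates $(0,-1)$ — and checking that no product can beat $(2|x|-1)^2$ is the content of the elementary optimization, which I would carry out by the induction sketched above.
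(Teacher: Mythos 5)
Your proposal has the right raw ingredients — the factorization into points on the $\gamma_A$ curves, the observation that each factor satisfies the Riesz bound $y_i \geq 2x_i^2 - 1$ so that $y_i < 0$ forces $|x_i| < 1/\sqrt{2}$, and the remark that the case $y \leq 0$ is immediate from the description of $\Pc_2$ — but it stops short of the decisive step and the induction you gesture at does not close. Here is the gap concretely. You try to propagate the bound $y \leq (2|x|-1)^2$ one factor at a time, but this cannot be done starting from a single negative factor: after one factor with $y_1 < 0$ you are sitting in the region $\{2x^2-1 \leq y \leq 0,\ |x| < 1/\sqrt2\}$ (the paper's $B_2$), not under the curve $(2|x|-1)^2$ with $|x| \leq 1/2$ (the paper's $B_1$), and multiplying by a positive-$y$ factor keeps you in the wrong region. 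The structural fact you are missing is that if the final $y$ is positive and there is at least one negative $y_i$, then there must be at least \emph{two} of them — and the key lemma is that the componentwise product of \emph{two} points in $B_2$ lands in $B_1$. That two-factor lemma is exactly the optimization you defer with ``then optimize''; the paper carries it out explicitly with Lagrange multipliers (maximize $(2a^2-1)(2b^2-1)$ subject to $ab=c$, showing the extremum sits on the boundary curve $(2|c|-1)^2$). Once you have $B_2 \cdot B_2 \subseteq B_1$, the remaining positive-$y$ factors are harmless because $B_1$ is stable under $(x,y) \mapsto (\alpha x, \beta y)$ with $|\alpha| \leq 1$, $0 \leq \beta \leq 1$ (shrinking $|x|$ below $1/2$ only raises $(2|x|-1)^2$). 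Your two-factor lemma as written — extend a point known to be under the max curve by an arbitrary $\Pc_2$ point — is not the right reduction, and you correctly flag that it runs into exactly the trouble near $|x|=1/2$ that the pairing of negatives is designed to sidestep. So the plan is close in spirit but missing the central idea (pair the negative factors) and the central calculation (the two-point optimization), and as sketched it would not yield a proof.
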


\begin{prop}
\label{prop:prime curves x>1/3 below x^4}
Let $(x,y)=\gamma_{A}(\theta)$ for some $A\ge 2$ and $\theta \in [0,\pi]$
such that $x>\frac{1}{3}$. Then $y \leq x^{4}$.
\end{prop}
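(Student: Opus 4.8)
The plan is to work directly with the explicit parametrization. Write $x = G_A(\theta)$ and $y = G_A(2\theta)$ where $G_A(\theta) = \sin(A\theta)/(A\sin\theta)$, and recall the Chebyshev-type identity $G_A(2\theta) = G_A(\theta)\cdot \bigl(\text{something}\bigr)$ — more precisely, using $\sin(2A\theta)/\sin(2\theta) = \cos(A\theta)\cdot \sin(A\theta)/(\sin\theta\cos\theta) \cdot \tfrac{1}{?}$; the clean route is the double-angle relation $y = G_A(2\theta) = G_A(\theta)\cdot \dfrac{\cos(A\theta)}{\cos\theta}$, which follows from $\sin(2A\theta) = 2\sin(A\theta)\cos(A\theta)$ and $\sin(2\theta) = 2\sin\theta\cos\theta$. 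Thus $y = x \cdot \dfrac{\cos(A\theta)}{\cos\theta}$. Since we want $y \le x^4$, and we will separately observe $x > 1/3 > 0$ restricts $\theta$ to a neighborhood of $0$ (and, by the $\theta \mapsto \pi - \theta$ symmetry of $|G_A|$, of $\pi$), it suffices to show $\bigl|\cos(A\theta)/\cos\theta\bigr| \le x^3 = G_A(\theta)^3$ on that range, or equivalently $|\cos(A\theta)|\cdot A^3\sin^3\theta \le |\sin(A\theta)|^3 \cdot |\cos\theta|$ — no wait, I would instead aim for the cleaner inequality $y \le x^4 \iff x\cos(A\theta)/\cos\theta \le x^4 \iff \cos(A\theta)/\cos\theta \le x^3$ (valid since $x>0$).

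The key steps, in order: (1) Establish the factorization $G_A(2\theta) = G_A(\theta)\cos(A\theta)/\cos\theta$ and reduce to proving $\cos(A\theta)/\cos\theta \le G_A(\theta)^3$ for all $\theta$ with $G_A(\theta) > 1/3$. (2) Show that $G_A(\theta) > 1/3$ forces $|\theta|$ (mod $\pi$) to be small — quantitatively, since $|G_A(\theta)| \le 1/(A|\sin\theta|)$ for $\theta \in (0,\pi)$ away from the symmetric point, one gets $A\sin\theta < 3$, i.e. $\sin\theta < 3/A$; also treat $A = 2,3,4$ by hand or note $\sin\theta$ small. (3) On this small-$\theta$ regime, use the substitution $u = A\theta$ (so $u$ ranges over a bounded interval since $\sin\theta < 3/A$ gives roughly $|u| \lesssim $ const, but actually $u$ can be up to $\sim A\cdot 3/A$) and compare $\cos u$ with $(\sin u / u)^3 \cdot (\text{correction from }A\sin\theta\text{ vs }u)$; more carefully, $G_A(\theta) = \frac{\sin u}{A \sin(u/A)}$, and since $\sin(u/A) \le u/A$ we have $G_A(\theta) \ge \sin u / u =: \operatorname{sinc}(u)$. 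So it suffices to prove $\cos u / \cos(u/A) \le \operatorname{sinc}(u)^3$ whenever $\operatorname{sinc}(u) \ge G_A(\theta) > 1/3$ — but that is false near $u = 0$ where both sides are $1$; the inequality must be an equality-at-$0$ phenomenon, so I would instead keep the true identity and prove $\cos u \le G_A(\theta)^3 \cos(u/A)$ by a Taylor expansion / monotonicity argument valid precisely because $G_A \ge \operatorname{sinc}(u) \ge \cos(u)^{1/3}$-type bounds hold on the relevant range. (4) Conclude, using the reflection symmetry to cover $\theta$ near $\pi$ and the sign bookkeeping (the factor $\cos(A\theta)/\cos\theta$ could be negative, in which case $y < 0 \le x^4$ trivially).

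The main obstacle I expect is step (3): proving the scalar inequality $\cos u \le \bigl(\tfrac{\sin u}{A\sin(u/A)}\bigr)^3 \cos(u/A)$ uniformly in $A \ge 2$ on the range where the left/right sides at $u=0$ agree to high order. The difficulty is that this is a delicate inequality between transcendental functions that is tight at $u = 0$ (both sides equal $1$, with matching first and second derivatives), so crude bounds will not suffice; one needs either a careful power-series comparison (showing the difference has a sign-definite leading term and controlling the tail), or a clever rewriting — e.g. using the product formula $\sin u / (A\sin(u/A)) = \prod_{j=1}^{A-1}\bigl(\text{factors}\bigr)$ or the known fact that $G_A(\theta)$ is a normalized Dirichlet-type kernel — to reduce the whole thing to a statement about $\cos$ being dominated by a product of $\cos$'s. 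I would look for the identity expressing $G_A(2\theta)/G_A(\theta)^4$ in a form whose sign on $\{x > 1/3\}$ is manifest, perhaps by induction on $A$, which I anticipate is how the authors proceed.
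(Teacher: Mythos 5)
Your reduction is on the right track, and up to a point it coincides with the paper's argument: peeling off one factor via $G_A(2\theta)=G_A(\theta)\,\frac{\cos(A\theta)}{\cos\theta}$ and discarding the case $y<0$ as trivial reduces the claim to showing $A^{3}\cos(At)\sin^{3}t\le\sin^{3}(At)\cos t$ on the range forced by $x>1/3$. Your sign bookkeeping and the use of reflection symmetry are also the same as in the paper, and you correctly diagnose that the naive replacement $G_A(\theta)\ge\operatorname{sinc}(A\theta)$ loses too much because the target inequality is tight to second order at $\theta=0$.

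The genuine gap is the scalar inequality itself, and neither of your suggested escapes (a power-series comparison with tail control, or induction on $A$) is how the paper closes it. The missing observation is a reorganization that converts it into a \emph{monotonicity} statement for a single function of one variable: divide by $\sin^{3}(At)\sin^{3}t$ and multiply by $t^{3}$ to rewrite the inequality as
\begin{equation*}
\frac{(At)^{3}\cos(At)}{\sin^{3}(At)} \;\le\; \frac{t^{3}\cos t}{\sin^{3}t},
\end{equation*}
i.e. $h(At)\le h(t)$ for $h(t):=\dfrac{t^{3}\cos t}{\sin^{3}t}$. Since $A\ge 2$, this follows once one knows $h$ is decreasing on $[0,\pi/2]$, which is an elementary calculus fact (differentiate; after clearing denominators the sign comes down to $4\sin t\cos t\,(t-\tan t)<0$). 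To land in the domain $[0,\pi/2]$ you need the sharper localization $t\le\pi/(2A)$, not merely $\sin t<3/A$; the paper's Lemma~\ref{lem:x-bigger-one-third} gives $t\le\pi/A$ from $|G_A(t)|\ge 1/3$ and then $y\ge 0$ halves it to $t\le\pi/(2A)$, so $s=At\in[0,\pi/2]$. So: your factorization and range-restriction plan are right, but without the $h(At)\le h(t)$ reformulation and the monotonicity of $h$, the core inequality is not proved, and your fallback strategies would not easily supply it.
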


We are now in a position to prove Proposition \ref{prop:y<=M(x) x>1/3}.

\begin{proof}[Proof of Proposition \ref{prop:y<=M(x) x>1/3} assuming
Lemma \ref{lem:not tot pos under max curve} and Proposition
\ref{prop:prime curves x>1/3 below x^4}]
If the point $(x,y) \in \Ac_{2}^{-}$ is of mixed sign,
Lemma \ref{lem:not tot pos under max curve} applies and hence $y \leq (2|x|-1)^{2}$.
Otherwise, if the point is totally positive,
$$
(x,y) =  \left(\prod_{i} x_{i}, \prod_{i}y_{i}\right)
$$
where $(x_{i},y_{i})$ are prime power attainable, and $y_{i} \geq 0$
for all $i$.

Now, $|x_{i}| \leq 1$ for all $i$ since $x_{i}$ is a Fourier
coefficient of a probability measure, so if $|x|>1/3$ we must have
$|x_{i}| > 1/3$ for all $i$. By
Proposition \ref{prop:prime curves x>1/3 below x^4}, $y_{i} \leq
x_{i}^{4}$ for all $i$, and thus $y \leq x^{4}$.
Thus it follows that the statement \eqref{eq:y<=M(x)} of Proposition \ref{prop:y<=M(x) x>1/3} holds
on $\Ac_{2}^{+}\cup \Ac_{2}^{-}$ and thus on its closure, $\Ac_{2}$ (cf. \eqref{eq:Ac=bar(A+,A-)}).

\end{proof}

\subsection{Proof of Lemma \ref{lem:not tot pos under max curve}:
the mixed sign points $\Ac_{2}^{-}$ lie under the max curve}

\label{sec:not tot pos under max curve}

To pursue the proof of Lemma \ref{lem:not tot pos under max curve} we
will need some further notation.

\begin{notation}
\label{not:B1 B2 def}

Let $B_{1}\subseteq [-1,1] \times  [-1,1]$ be the set
\begin{equation*}
B_{1} = \{(x,y):\: x \in [-1/2,1/2],\, 0 \leq y \leq (2|x|-1)^{2} \},
\end{equation*}
and $B\subseteq [-1,1] \times  [-1,1]$ be the domain
\begin{equation*}
B_{2} = \{(x,y):\: x\in [-1/\sqrt{2},1/\sqrt{2}],\,  2x^{2}-1 \leq y \leq 0\}
\end{equation*}

\end{notation}

Recall the Definition \ref{def:tot pos not} of totally positive attainable points $\Ac_{2}^{+}$,
and componentwise product of points \eqref{eq:compwise prod}. It is
obvious that the points of either $B_{1}$ and $B_{2}$ are all lying
under the max curve, i.e. if $$(x,y)\in B_{1}\cup B_{2},$$ then
$$y \le \Mcc(x).$$ Therefore the following lemma implies
Lemma \ref{lem:not tot pos under max curve}.

\begin{lem}
\label{lem:mix sign in B1 or B2}
If $(x,y)\in \Ac_{2}^{-}$ is a mixed sign attainable point
then $$(x,y) \in B_{1}\cup B_{2}.$$
\end{lem}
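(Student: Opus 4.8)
The plan is to understand what a mixed-sign point $(x,y)=\prod_{i=1}^K (x_i,y_i)$ with $(x_i,y_i)=\gamma_{2;A_i}(\theta_i)=(G_{A_i}(\theta_i),G_{A_i}(2\theta_i))$ can look like when at least one factor has negative second coordinate. The governing identity is the trigonometric relation between the two coordinates of a single curve point: writing $g=G_A(\theta)$ we have $G_A(2\theta)=\tfrac{\sin(2A\theta)}{A\sin(2\theta)}$, and using $\sin(2A\theta)=\sin(A\theta)\cos(A\theta)\cdot 2$-type identities together with $\cos(A\theta)^2=1-\sin(A\theta)^2$ one can express $y_i$ in terms of $x_i$ and the auxiliary quantity $\cos(\theta_i)$ (or $\cos(A_i\theta_i)$). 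The key elementary fact I would isolate first is a sharp two-sided bound: for every single curve point $(x_i,y_i)=\gamma_{2;A_i}(\theta_i)$ one has $2x_i^2-1\le y_i$ always (this is just $\Pc_2$, since $\upsilon_{\theta;M}\in\Pc$), and — this is the substantive half — when $y_i<0$ one moreover has $y_i\le (2|x_i|-1)^2$, i.e. a negative-$y$ curve point already lies in $B_1\cup B_2$ with room to spare; in fact one should get something stronger, like $|x_i|\le 1/\sqrt2$ whenever $y_i\le 0$, because $G_A(2\theta)\le 0$ forces $|\sin(2A\theta)|$-vs-$|\sin(2\theta)|$ constraints pushing $\theta$ away from $0$ and $\pi$.

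Granting that single-factor dichotomy, the second step is a multiplicativity/monotonicity argument under componentwise products. Suppose the product $(x,y)$ is mixed sign. If an \emph{odd} number of the $y_i$ are negative then $y<0$, and I would show $(x,y)\in B_2$: one needs $|x|\le 1/\sqrt2$ and $2x^2-1\le y\le 0$. The lower bound $2x^2-1\le y$ is the assertion that the componentwise product of curve points stays in $\Pc_2$ — equivalently $\Ac_2\subseteq\Pc_2$, which holds because $\Ac\subseteq\Pc$ and $\Fc_2$ is the projection; so only $|x|\le 1/\sqrt2$ needs work, and that should follow from: at least one factor has $|x_i|\le 1/\sqrt2$ (the one with $y_i<0$), and all other $|x_j|\le 1$, so $|x|=\prod|x_i|\le 1/\sqrt2$. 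If an \emph{even} (but positive) number of the $y_i$ are negative then $y>0$ (or $=0$) and I would show $(x,y)\in B_1$: again all $|x_j|\le 1$ and at least two factors have $|x_i|\le 1/\sqrt2$, so $|x|\le 1/2$, giving the $x$-range of $B_1$; then I must check $0\le y\le (2|x|-1)^2$. The lower bound $y\ge 0$ is immediate from the even count; the upper bound is the only genuinely delicate inequality.

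For that upper bound $y\le(2|x|-1)^2$ when $|x|\le 1/2$, the natural route is to reduce to two factors by pairing: group the negative-$y_i$ factors in pairs, absorb the remaining positive factors (each with $|x_j|\le1$, $0\le y_j\le 1$, and $y_j\le$ something like $x_j^2$ or at worst $y_j\le1$) and prove a clean two-variable lemma: if $(x_1,y_1)$ and $(x_2,y_2)$ each satisfy $y_i<0$, $2x_i^2-1\le y_i<0$, then their componentwise product $(x_1x_2,y_1y_2)$ satisfies $0\le y_1y_2\le (2|x_1x_2|-1)^2$. That is a calculus exercise: parametrize $y_i\in[2x_i^2-1,0)$, so $|y_i|\le 1-2x_i^2$, hence $y_1y_2=|y_1||y_2|\le (1-2x_1^2)(1-2x_2^2)$, and one checks $(1-2x_1^2)(1-2x_2^2)\le(1-2|x_1x_2|)^2=(2|x_1x_2|-1)^2$ — expand: this is $1-2x_1^2-2x_2^2+4x_1^2x_2^2\le 1-4|x_1x_2|+4x_1^2x_2^2$, i.e. $4|x_1x_2|\le 2x_1^2+2x_2^2$, which is AM–GM. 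Then multiplying in the extra positive factors $(x_j,y_j)$ with $0\le y_j\le1$ and $|x_j|\le1$ can only shrink both $|x|$ and $y$ in a way that preserves membership in $B_1$ (monotonicity of $(2t-1)^2$ on $[0,1/2]$ must be used carefully: shrinking $|x|$ towards $0$ \emph{increases} $(2|x|-1)^2$, so this direction is safe, while $y$ only shrinks — good).

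The main obstacle I anticipate is precisely the bookkeeping in this last step: handling an arbitrary number of negative factors and an arbitrary number of positive factors simultaneously, and making sure the pairing argument plus the "extra positive factors only help" claim really does land every mixed-sign product in $B_1\cup B_2$ and not somewhere in between — in particular the boundary cases $y=0$, $|x|=1/2$, $|x|=1/\sqrt2$, and factors with $\theta_i\in\{0,\pi\}$ or $y_i=0$ must be checked separately. I would also need the single-factor lemma (negative $y_i$ implies $|x_i|\le 1/\sqrt2$ and $y_i\le(2|x_i|-1)^2$) to be genuinely sharp; proving it cleanly for all $A\ge2$ via the $G_A$ formula is the one place a real trigonometric estimate, rather than pure algebra, is unavoidable.
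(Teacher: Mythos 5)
Your proposal follows essentially the same decomposition as the paper: isolate the single-factor fact that a point of the form $(\hat{\mu}(1),\hat{\mu}(2))$ with negative second coordinate lies in $B_2$ (equivalently $\Pc_2\cap\{y\le 0\}=B_2$ via the explicit description of $\Pc_2$ as the convex hull of the parabola), then split into the cases $y\le 0$ (which lands directly in $B_2$) and $y>0$ (where an even number $\ge 2$ of the $y_i$ are negative), pair up two negative factors, and absorb the remainder. The key two-variable ingredient --- that $p_1,p_2\in B_2$ implies $p_1\cdot p_2\in B_1$ --- is proven in the paper by a Lagrange-multiplier optimization of $(2a^2-1)(2b^2-1)$ under the constraint $ab=c$, whereas you prove the same inequality $(1-2x_1^2)(1-2x_2^2)\le(1-2|x_1x_2|)^2$ by direct expansion and AM--GM ($4|x_1x_2|\le 2x_1^2+2x_2^2$). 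Your route is arguably cleaner and more elementary; the conclusion is the same.

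One piece of bookkeeping you flag but don't close: when there are more than two negative factors, your plan of ``pair all negatives, then absorb the rest'' requires knowing that the componentwise product of two $B_1$ points again lies in $B_1$, which you never verify. The paper sidesteps this by extracting exactly \emph{one} pair of negative-$y$ factors and lumping \emph{everything} else (remaining negative pairs included) into a single $(\tilde x,\tilde y)$ with $|\tilde x|\le 1$ and $0\le\tilde y\le 1$, then invoking the simple observation that $B_1$ is stable under $(x,y)\mapsto(\alpha x,\beta y)$ for $|\alpha|\le 1$, $0\le\beta\le 1$ (using that $(2t-1)^2$ is decreasing on $[0,1/2]$). Adopting that step in place of iterated pairing closes the gap you anticipated and handles all your boundary cases ($y_i=0$, etc.) uniformly, since such factors just contribute $|\alpha|\le 1$, $0\le\beta\le 1$.
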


To prove Lemma \ref{lem:mix sign in B1 or B2} we establish the
following two auxiliary lemmas whose proof is postponed until immediately
after the proof of Lemma \ref{lem:mix sign in B1 or B2}.

\begin{lem}
\label{lem:(x,y),y<0=>(x,y)in B}
If $(x,y) = (\hat{\mu}(1), \hat{\mu}(2))$ for $\mu$ some probability
measure on $\sone$ and $y \leq 0$, then $(x,y) \in B_{2}$.
\end{lem}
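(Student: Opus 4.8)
The plan is to exploit the fact, recalled from Riesz's theorem in Section~\ref{sec:Fourier image}, that the point $(x,y)=(\widehat{\mu}(1),\widehat{\mu}(2))$ for a probability measure $\mu$ on $\sone$ (satisfying the conjugation symmetry, so that the Fourier coefficients are real) lies in the convex region $\Pc_{2}=\{(x,y):|x|\le 1,\ 2x^{2}-1\le y\le 1\}$, which is the convex hull of the parabolic arc $\{(\cos\theta,\cos 2\theta):\theta\in[0,\pi]\}$. We must upgrade the containment $(x,y)\in\Pc_{2}$ to the sharper statement: if in addition $y\le 0$, then $x\in[-1/\sqrt{2},1/\sqrt{2}]$ (equivalently $2x^{2}-1\le 0$), so that $(x,y)\in B_{2}$.

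The key step is the elementary observation that the upper boundary of $\Pc_{2}$ on the half-plane $y\le 0$ is governed by the parabola $y=2x^{2}-1$ itself. Concretely, for any $(x,y)\in\Pc_{2}$ we have $y\ge 2x^{2}-1$; hence if $y\le 0$ then $2x^{2}-1\le y\le 0$, which forces $x^{2}\le 1/2$, i.e. $x\in[-1/\sqrt{2},1/\sqrt{2}]$. Together with $2x^{2}-1\le y\le 0$ this is exactly the defining condition of $B_{2}$ in Notation~\ref{not:B1 B2 def}. In the writeup I would state this as a two-line deduction: first cite \eqref{eq:P2 Riesz} (or re-derive $\widehat{\mu}(2)\ge 2\widehat{\mu}(1)^{2}-1$ from $\int (\cos 2\theta - (2\cos^{2}\theta-1))\,d\mu = 0$ combined with Jensen's inequality applied to the convex function $t\mapsto 2t^{2}-1$, since $\cos 2\theta = 2\cos^{2}\theta - 1$ and $\widehat{\mu}(1)=\int\cos\theta\,d\mu$), then read off the constraint on $x$.

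There is essentially no obstacle here: the only thing to be careful about is to invoke the Jensen/convexity inequality in the right direction, namely $\int (2\cos^{2}\theta-1)\,d\mu \ge 2\left(\int\cos\theta\,d\mu\right)^{2}-1$, which gives $\widehat{\mu}(2)=\int\cos 2\theta\,d\mu \ge 2\widehat{\mu}(1)^{2}-1$. The rest is the trivial algebra above. This lemma is the base case feeding into Lemma~\ref{lem:mix sign in B1 or B2}, where the multiplicative (convolution) structure will be used to propagate the $B_{1}\cup B_{2}$ dichotomy through componentwise products.
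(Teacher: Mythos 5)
Your proof is correct and follows the same route as the paper: both invoke the explicit description $\Pc_{2} = \{(x,y) : |x|\le 1,\ 2x^{2}-1\le y\le 1\}$ from \eqref{eq:P2 Riesz} and simply intersect with $\{y\le 0\}$, noting that $2x^{2}-1\le y\le 0$ forces $x^{2}\le 1/2$. The optional Jensen re-derivation of $\hat{\mu}(2)\ge 2\hat{\mu}(1)^{2}-1$ is a nice self-contained alternative to citing Riesz, but the core argument is identical.
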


\begin{lem}
\label{eq:p1,p2 in B=> p1p2 in A}
If $p_{1}, p_{2} \in B_{2}$, then $p_{1} \cdot p_{2} \in B_{1}$.
\end{lem}

\begin{proof}[Proof of Lemma \ref{lem:mix sign in B1 or B2}
assuming the auxiliary lemmas.]

Let $$(x,y)\in \Ac_{2}^{-}$$ be given. First, if $(x,y)\in
\Ac_{2}^{-}$ with $y\le 0$, then $(x,y)\in B_{2}$ by Lemma
\ref{lem:(x,y),y<0=>(x,y)in B}; hence we may assume $y>0$. Let
$(x_{i},y_{i})$ be as in \eqref{eq:(x,y)=prod(xi,yi)}, which according
to the Definition \ref{def:tot pos not} have mixed signs.  Since $y
\geq 0$ we can in fact find $i \neq j$ for which $y_{i},y_{j}< 0$, and
without loss of generality we may assume that $(i,j)=(1,2)$.
Letting $$(\tilde{x},\tilde{y}) = \left(\prod_{k \neq 1,2} x_{k},
  \prod_{k \neq 1,2} y_{k}\right)$$ we find that $$(x,y) =
(x_{1},y_{1})\cdot (x_{2},y_{2}) \cdot (\tilde{x},\tilde{y}),$$ where
$\tilde{y} \in [0,1]$ and $\tilde{x} \in [-1,1]$.

We further note that both $(x_{1},y_{1})$ and $(x_{2},y_{2})$ lie in
$B_{2}$. Thus by Lemma \ref{eq:p1,p2 in B=> p1p2 in A},
$$(x_{1},y_{1})\cdot (x_{2},y_{2}) \in B_{1}.$$  Since $|\tilde{x}|,\tilde{y} \leq 1,$ the
result follows on noting that $B_{1}$ is mapped into itself by any map
of the form $$(x,y) \to (\alpha x, \beta y),$$ provided that
$$0 \leq |\alpha|, \beta \leq 1.$$
\end{proof}

\subsubsection{Proofs of the auxiliary lemmas \ref{lem:(x,y),y<0=>(x,y)in B} and \ref{eq:p1,p2 in B=> p1p2 in A}}

\begin{proof}[Proof of Lemma \ref{lem:(x,y),y<0=>(x,y)in B}]

The assumptions 
are equivalent to $(x,y)\in \Pc_{2}$ with $y\le 0$.
The statement
follows immediately upon using
the explicit description \eqref{eq:P2 Riesz} of $\Pc_{2}$:
\begin{equation*}
\Pc_{2}\cap \{ y\le 0\} = B_{2}.
\end{equation*}

\end{proof}

\begin{proof}[Proof of Lemma \ref{eq:p1,p2 in B=> p1p2 in A}]
  The case of either point having zero $y$-coordinate is trivial, so
  we may assume that both $p_{1},p_{2}$ have negative
  $y$-coordinates, and it suffices to prove the statement for points
  $p_{1},p_{2}$ having minimal $y$-coordinates, i.e.,
$$
p_{1} = (a, 2a^{2}-1), \quad p_{2} = (b, 2b^{2}-1),
$$
and we may further assume $ab\ne 0$ as otherwise the statement is trivial.

By symmetry it suffices to consider the case $a,b \in (0,1/\sqrt{2})$.
Thus, if we fix $c \in (0,1/2)$ it suffices to determine the maximum
of $$(2a^{2}-1)(2b^{2}-1)$$ subject to the constraint $ab = c$.  Taking
logs we find that the constraint is given by
$$
\log a + \log b = \log c
$$
and we wish to maximize
$$
\log( 1-2a^{2}) + \log(1-2b^{2}).
$$
Using Lagrange multipliers we find that all internal maxima satisfies
$$
(1/a,1/b) = \lambda \left( \frac{4a}{1-2a^{2}}, \frac{4b}{1-2b^{2}}\right)
$$
for some $\lambda \in \R$.  If $c = ab \neq 0$ we find that
$$
(1,1) = \lambda \left( \frac{4a^{2}}{1-2a^{2}}, \frac{4b^{2}}{1-2b^{2}}\right)
$$
and thus $\frac{4a^{2}}{1-2a^{2}} = \frac{4b^{2}}{1-2b^{2}}$ which
implies that $a^{2}=b^{2}$, and hence,
recalling that we assumed $a,b\ge 0$, it yields $a = b$.
In particular, any internal maximum gives a point $(a^{2},
(2a^{2}-1)^{2}) = (c,(2|c|-1)^{2}$), which lies on the boundary of $B_{1}$.
As mentioned earlier, for points on the boundary, the inequality
holds trivially.

\end{proof}

\subsection{Proof of Proposition \ref{prop:prime curves x>1/3 below x^4}:
totally positive points $\Ac_{2}^{+}$ corresponding to prime powers}

\label{sec:prime curves x>1/3 below x^4}

\begin{lem}
\label{lem:sinc-decreasing}
The function $\frac{\sin t  }{t}$ is decreasing and is $ \geq 0$ on
$[0,\pi]$.
\end{lem}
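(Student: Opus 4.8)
This is an elementary calculus fact; the plan is to reduce monotonicity to the sign of a single auxiliary function via two successive differentiations. Write $f(t) = \sin(t)/t$ for $t \in (0,\pi]$, extended by $f(0) = 1$ so that $f$ is continuous on $[0,\pi]$.

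\textbf{Nonnegativity.} For $t \in (0,\pi]$ we have $\sin(t) \ge 0$ and $t > 0$, hence $f(t) \ge 0$; together with $f(0) = 1 > 0$ this gives $f \ge 0$ on all of $[0,\pi]$.

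\textbf{Monotonicity.} Differentiating, $f'(t) = \dfrac{t\cos(t) - \sin(t)}{t^{2}}$ for $t \in (0,\pi]$, so it suffices to show that $g(t) := t\cos(t) - \sin(t) \le 0$ on $(0,\pi]$. First I would note $g(0) = 0$, and then compute $g'(t) = \cos(t) - t\sin(t) - \cos(t) = -t\sin(t)$. Since $t \ge 0$ and $\sin(t) \ge 0$ on $[0,\pi]$, we get $g'(t) \le 0$ there (with strict inequality on $(0,\pi)$), so $g$ is decreasing on $[0,\pi]$ and therefore $g(t) \le g(0) = 0$ for all $t \in [0,\pi]$. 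Consequently $f'(t) \le 0$ on $(0,\pi)$, and by continuity of $f$ on $[0,\pi]$ this yields that $f$ is (strictly) decreasing on $[0,\pi]$.

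\textbf{Main obstacle.} There is essentially no real obstacle; the only point requiring a word of care is the behaviour at the endpoint $t = 0$, where $f$ must be understood via its continuous extension, and one must pass from $f' \le 0$ on the open interval to monotonicity on the closed interval using continuity. Everything else is the routine two-step sign analysis above.
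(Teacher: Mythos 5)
Your proof is correct and takes essentially the same approach as the paper: differentiate $\sin(t)/t$ and show the numerator $t\cos t - \sin t$ is nonpositive, which is the statement $\tan t \geq t$. The paper simply cites this last inequality as known on $(0,\pi/2)$, whereas you verify it from scratch via a second differentiation ($g'(t) = -t\sin t \le 0$), which is a modest gain in self-containment but the same underlying idea.
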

\begin{proof}
  Taking derivatives, this amounts to the fact that $\tan t > t$ on
  $(0,\pi/2)$.
\end{proof}

\begin{lem}
\label{lem:x-bigger-one-third}
If $A \geq 4$ and $|G_{A}(t)| \geq 1/3$ for $t \in [0,\pi/2]$,
then $t \leq \frac{\pi}{A}$.
For $A=3$, we have the further possibility that $t = 3 \pi/(2A)
  = \pi/2$.
\end{lem}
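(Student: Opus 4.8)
The plan is to establish the contrapositive. Recalling $G_A(t) = \frac{\sin(At)}{A\sin t}$, for $A \ge 4$ it suffices to show that $t \in (\pi/A, \pi/2]$ forces $|G_A(t)| < \frac13$; and for $A = 3$ it suffices to show that the only $t \in (\pi/3, \pi/2]$ with $|G_3(t)| \ge \frac13$ is $t = \pi/2$. The basic tool throughout is the crude estimate $|G_A(t)| \le \frac{1}{A\sin t}$, valid whenever $\sin t > 0$, which I will combine with monotonicity of $\sin$ and of the sinc function.

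I would first dispose of all $A \ge 6$ at once. Since $\sin$ is strictly increasing on $[0,\pi/2]$, the assumption $t > \pi/A$ gives $\sin t > \sin(\pi/A)$, hence $A\sin t > A\sin(\pi/A)$. Writing $A\sin(\pi/A) = \pi\cdot\frac{\sin(\pi/A)}{\pi/A}$ and using that $\sin(x)/x$ is decreasing on $[0,\pi]$ (Lemma \ref{lem:sinc-decreasing}), the map $A \mapsto A\sin(\pi/A)$ is nondecreasing in $A$, so for $A \ge 6$ it is at least $6\sin(\pi/6) = 3$. Therefore $A\sin t > 3$ strictly, and $|G_A(t)| \le \frac{1}{A\sin t} < \frac13$.

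The remaining cases $A = 3, 4, 5$ I would treat by hand, writing $G_A$ as an explicit polynomial via multiple-angle identities. For $A = 4$, one has $G_4(t) = \cos t\cos 2t$, and on $(\pi/4,\pi/2]$ the factors satisfy $\cos t \in [0, \frac{1}{\sqrt2}]$ and $\cos 2t \le 0$, so $|G_4(t)| = \cos t\,(1 - 2\cos^2 t)$; optimizing this function of $u = \cos t$ over $[0,\frac{1}{\sqrt2}]$ (critical point $u = 1/\sqrt6$) gives maximum $\frac{2}{3\sqrt6} < \frac13$. For $A = 5$, $G_5(t) = \frac15(16\sin^4 t - 20\sin^2 t + 5)$; with $s = \sin^2 t$ the numerator is an upward parabola with roots $\frac{5\pm\sqrt5}{8}$ (note $\sin^2(\pi/5) = \frac{5-\sqrt5}{8}$ is the smaller root, consistent with $G_5(\pi/5) = 0$), so on $s \in [\frac{5-\sqrt5}{8}, 1]$ it ranges in $[-\frac54, 1]$, giving $|G_5(t)| \le \frac14 < \frac13$. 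For $A = 3$, $G_3(t) = 1 - \frac43\sin^2 t$, so $|G_3(t)| \ge \frac13$ means either $1 - \frac43\sin^2 t \ge \frac13$, i.e. $\sin^2 t \le \frac12$ and hence $t \le \pi/4 < \pi/3$, or $1 - \frac43\sin^2 t \le -\frac13$, i.e. $\sin^2 t \ge 1$ and hence $t = \pi/2 = 3\pi/(2A)$ --- exactly the two stated alternatives.

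The only genuine obstacle is that the crude bound $|G_A(t)| \le \frac{1}{A\sin t}$ is just barely insufficient for $A = 4$ and $A = 5$, precisely because $A\sin(\pi/A) < 3$ for those two exponents; that is why they must be handled by a short explicit optimization rather than being absorbed into the uniform argument for $A \ge 6$. Everything else is elementary.
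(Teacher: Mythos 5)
Your proof is correct, but it takes a genuinely different route from the paper's. The paper first uses $\sin t \geq 2t/\pi$ to restrict attention to $t \leq 3\pi/(2A)$, establishes the $A=3$ case by showing $G_3$ is monotone on $[0,\pi/2]$, and then handles all $A \geq 4$ at once via the observation
\[
\left|\frac{\sin(At)}{A\sin t}\right| = \left|\frac{\sin(At)/(At)}{\sin(t)/t}\right| < \left|\frac{\sin(At)/(At)}{\sin(At/3)/(At/3)}\right| = \left|\frac{\sin 3s}{3\sin s}\right|, \quad s := At/3,
\]
which reduces the general case to the already-settled case $A=3$ via the sinc monotonicity of Lemma~\ref{lem:sinc-decreasing}. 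You instead split on the size of $A$: the crude bound $|G_A(t)| \leq 1/(A\sin t)$ together with the monotonicity of $A \mapsto A\sin(\pi/A)$ kills $A \geq 6$ uniformly (since $6\sin(\pi/6)=3$), while $A=3,4,5$ are dispatched by writing $G_A$ as an explicit polynomial and optimizing (your observation that $4\sin(\pi/4), 5\sin(\pi/5) < 3$ correctly identifies why these two cases escape the uniform bound). Both arguments rest on the same basic ingredients (the crude bound $|G_A| \leq 1/(A\sin t)$ and sinc monotonicity); the paper's version is more uniform and avoids any multiple-angle algebra, whereas yours is more elementary and yields a slightly sharper cutoff $t\leq \pi/4$ in the $A=3$ case. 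Either is perfectly adequate for the downstream applications.
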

\begin{proof}
The inequality $\sin t \geq 2t/\pi$, valid for $t \in [0,\pi/2]$,
and strict except at the end points, gives that
$$
|G_{A}(t) |  = \left| \frac{\sin(A\theta)}{A\sin\theta} \right|
\leq \frac{1}{A \sin t} \leq \frac{1}{A \cdot
  \frac{2}{\pi} t}
$$
and hence $|G_{A}(t)| < 1/3$ for $t > 3 \pi/(2A)$, for any
$A>0$.
It thus suffices to consider $t \in [0, 3 \pi/(2A)]$.

Consider first the case $A=3$. We begin by showing that $G_{3}(t)$ is
decreasing on $[0,\pi/2]$. Taking derivatives, this amounts to
the fact that
that $$3 \tan t \neq \tan 3t$$ on $[0,\pi/2]$.  Now, since
$G_{3}(\pi/3) = 0$ and $G_{3}(\pi/2) = -1/3$ and $G_{3}$ is
decreasing, we find that the only possibility for $|G_{3}(t)| = 1/3$
and $t \in [\pi/3,\pi/2]$ is $t = \pi/2$.  Thus, any other solution must lie
in $[0,\pi/3] = [0,\pi/A]$.

For $A \geq 4$, note that
\begin{equation}
\label{eq:crap1}
\left|\frac{\sin At}{A \sin t} \right|
= \left|\frac{\sin(At)/(At)}{\sin(t)/t} \right|
<
\left|\frac{\sin(At)/(At)}{\sin(At/3)/(At/3)} \right|
\end{equation}
(for $t \leq 3\pi/(2A)$ we have $At/3 \leq \pi/2$, hence
$$
|\sin(At/3)/(At/3) | \leq |\sin(t)/t|,
$$
since $(\sin x)/x$ is decreasing on the interval $[0,\pi]$ by
Lemma \ref{lem:sinc-decreasing}.)

Taking $s = At/3$, the RHS of (\ref{eq:crap1}) becomes
$$
\frac{(\sin 3s)/3s}{(\sin s) /s} = \frac{\sin 3s}{3 \sin s}
$$
and $t \leq 3\pi/(2A)$ implies that $s \leq \pi/2$.  For this range of
$s$, by the first part of the lemma, we find that $\left|\frac{\sin 3s}{3
  \sin s}\right| \geq 1/3$ implies that either $s=\pi/2$ or $s \leq \pi/3$,
which in turn implies that $t =3\pi/(2A)$ or $t \leq \pi/A$.  Noting
that the first possibility is ruled out by the strict inequality in
(\ref{eq:crap1}), the proof is concluded.

\end{proof}

We proceed to characterize points lying on curves $\{
(x,y)=\gamma_{A}(t)\}_{A\ge 2}$, for which $x > 1/3$ and $y \geq 0$,
showing that any such point satisfies $y \leq x^{4}$. We begin with
the following key Lemma.
\begin{lem}
\label{lem:key}
For $t \in (0,\pi/2]$, define
\begin{equation}
  \label{eq:h(t)-definition}
h(t) := \frac{t^{3}\cos t}{\sin^{3}t}
\end{equation}
and extend $h$ to $[0,\pi/2]$ by continuity.
Then $h(t)$ is decreasing on $[0, \pi/2]$.
\end{lem}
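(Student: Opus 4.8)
The plan is to show that $\log h(t) = 3\log t + \log\cos t - 3\log\sin t$ is decreasing on $(0,\pi/2)$ by verifying that its derivative is negative there; continuity at the endpoints then extends monotonicity to the closed interval $[0,\pi/2]$ (at $t=0$ one checks $h(0)=1$ as a limit, and at $t=\pi/2$ one has $h(\pi/2)=0$). Differentiating,
\[
\frac{h'(t)}{h(t)} = \frac{3}{t} - \tan t - \frac{3\cos t}{\sin t} = \frac{3}{t} - \tan t - 3\cot t,
\]
so the claim reduces to the trigonometric inequality
\[
\frac{3}{t} < \tan t + 3\cot t \qquad \text{for } t\in(0,\pi/2).
\]
First I would clear denominators: multiplying through by $t\sin t\cos t > 0$, the inequality becomes
\[
3\sin t\cos t < t\sin^{2}t + 3t\cos^{2}t,
\]
i.e., using $\sin^2 t = 1-\cos^2 t$ and the double-angle formulas, $\tfrac{3}{2}\sin(2t) < t + 2t\cos^{2}t = t + t(1+\cos 2t) = 2t + t\cos(2t)$. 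Writing $u = 2t \in (0,\pi)$, this is
\[
\tfrac{3}{2}\sin u < u + \tfrac{u}{2}\cos u, \qquad u\in(0,\pi),
\]
equivalently $3\sin u - u\cos u < 2u$, or $g(u) := 2u - 3\sin u + u\cos u > 0$ on $(0,\pi)$.

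The main work is establishing $g(u)>0$ on $(0,\pi)$. I would do this by a Taylor/derivative argument: $g(0)=0$, and $g'(u) = 2 - 3\cos u + \cos u - u\sin u = 2 - 2\cos u - u\sin u$. Then $g'(0)=0$, and $g''(u) = 2\sin u - \sin u - u\cos u = \sin u - u\cos u$, with $g''(0)=0$ and $g'''(u) = \cos u - \cos u + u\sin u = u\sin u$. Now $g'''(u) = u\sin u > 0$ on $(0,\pi)$, so $g''$ is strictly increasing from $g''(0)=0$, hence $g''>0$ on $(0,\pi)$; therefore $g'$ is strictly increasing from $g'(0)=0$, hence $g'>0$; therefore $g$ is strictly increasing from $g(0)=0$, giving $g(u)>0$ on $(0,\pi)$ as required. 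This chain of three successive antiderivatives, each starting at $0$ with positive derivative, is clean and self-contained.

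The only genuine obstacle I anticipate is bookkeeping: making sure the reduction from $h'(t)/h(t)<0$ down to $g(u)>0$ is carried out with the right signs (since $\log\cos t$ and $\log\sin t$ contribute $-\tan t$ and $-3\cot t$ with opposite-looking effects), and confirming the boundary behavior — that $h$ extends continuously with $h(0)=\lim_{t\to 0} t^3\cos t/\sin^3 t = 1$ and $h(\pi/2)=0$, so that ``decreasing on $[0,\pi/2]$'' is literally correct including the endpoints. I'd also note $h>0$ on $[0,\pi/2)$, which is immediate. No deep idea is needed beyond the observation that repeatedly differentiating $2u - 3\sin u + u\cos u$ terminates at the manifestly positive $u\sin u$.
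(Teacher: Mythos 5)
Your proof is correct. Both you and the paper reduce the claim to the same elementary inequality: after clearing denominators, showing $h'(t)<0$ on $(0,\pi/2)$ is equivalent to showing $3\sin t\cos t - t\sin^2 t - 3t\cos^2 t < 0$. (You get there via $\log h$; the paper differentiates $h$ directly — incidentally, the displayed formula for $h'$ in the paper drops a factor of $3$ on the $\sin t\cos t$ term, but the subsequent argument uses the correct expression.) Where the two proofs diverge is in the endgame. The paper notes that this expression vanishes at $t=0$, differentiates it \emph{once} to obtain $4\sin t\cos t\,(t-\tan t)$, and cites the classical fact $\tan t>t$ on $(0,\pi/2)$. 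You instead substitute $u=2t$, recast the inequality as $g(u):=2u-3\sin u+u\cos u>0$ on $(0,\pi)$, and differentiate \emph{three} times to land on the manifestly positive $g'''(u)=u\sin u$, then integrate back up through the chain $g''(0)=g'(0)=g(0)=0$. Your route is a bit longer but entirely self-contained (it does not invoke $\tan t>t$, which the paper proves in a separate lemma via essentially the same differentiation trick); the paper's is shorter because it outsources one step to a known inequality. The two are mathematically interchangeable and equally elementary.
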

\begin{proof}

We have
\begin{equation*}
\begin{split}
h'(t) &= \frac{t^{2}\sin^{2}(t) \left(  \sin(t) \cos(t)-t\sin^{2}(t)-3t\cos^{2}(t)\right)}{\sin^{6}t},
\end{split}
\end{equation*}
and it is enough to show that
\begin{equation}
\label{eq:h'<0 denominator}
\begin{split}
\sin(t) \cos(t)-t\sin^{2}(t)-3t\cos^{2}(t) < 0
\end{split}
\end{equation}
for $t\in (0,\pi/2)$. Since for $t=0$ the expression on the left hand
side of \eqref{eq:h'<0 denominator} vanishes it is sufficient to show
that its derivative is strictly negative on
$\left(0,\frac{\pi}{2}\right)$.  We find that
\begin{multline*}
\left(
\sin(t) \cos(t)-t\sin^{2}(t)-3t\cos^{2}(t) \right)' =
\\=
4\sin(t)(t\cos(t)-\sin(t))
=4\sin(t) \cos(t)(t-\tan{t}) < 0
\end{multline*}
since $\tan(t)>t$ on $\left(0,\frac{\pi}{2}\right)$.

\end{proof}

\begin{proof}[Proof of Proposition \ref{prop:prime curves x>1/3 below x^4}.]

If $A=2$, the points lying on the curve $\gamma_{2}$
are of the form $$(x,y)=\gamma_{2}(t)= (t, 2t^{2}-1),$$ and it is
straightforward to check that $2t^{2}-1 \leq t^{4}$.
For $A \geq 3$, since we assume that $x > 1/3$ and
$$
(x,y) = (G_{A}(t), G_{A}(2t) ),
$$
Lemma \ref{lem:x-bigger-one-third} implies that $t \leq \pi/A$.  In
fact, $t \leq \pi/(2A)$, as we assume that $y \geq 0$.
Hence it is sufficient to show  that
$$
\frac{\sin 2At}{A \sin 2t} \leq \left( \frac{\sin At}{A \sin
    t}\right)^{4}
$$
holds for $t \in [0, \pi/(2A)]$.

This in turn  is equivalent (note that all individual trigonometric terms are
non-negative since $t \in [0, \pi/(2A)]$) to
$$
A^{3} \cos At \sin^{3} t \leq \sin^{3} At \cos t
$$
which is equivalent to
$$
\frac{(At)^{3} \cos At}{\sin^{3} At} \leq
\frac{t^{3  }\cos t}{\sin^{3} t}.
$$
Setting $$s = At\in [0,\pi/2],$$ we find that this is equivalent to
$$
\frac{s^{3} \cos s}{\sin^{3} s} \leq
\frac{(s/A)^{3  }\cos s/A}{\sin^{3} s/A},
$$
or, equivalently on recalling (\ref{eq:h(t)-definition}), that
$$
h(s) \leq h(s/A).
$$
which, as $A>1$,  follows from Lemma~\ref{lem:key}.

\end{proof}

\subsection{Proof of Proposition
\ref{prop:x>1/3 under max attainable}: all points under the max curve are attainable}

\label{sec:x>1/3 under max attainable}

\begin{lem}
\label{lem:x^4 attainable}
The curve $\{ (x,x^4): x\in [0,1]\}$ is square-free attainable,
i.e. all the points on this curve correspond to at least one attainable measure.
\end{lem}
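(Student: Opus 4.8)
The claim is that the curve $\{(x,x^4):x\in[0,1]\}$ is square-free attainable, i.e.\ each such point is $(\widehat{\mu}(1),\widehat{\mu}(2))$ for a square-free attainable $\mu$. Since the square-free attainable set $\Ac^{0}_{2}$ is the smallest closed multiplicative subset of $\Pc_{2}$ containing the single curve $\gamma_{2}(\theta)=(\cos\theta,\cos2\theta)$, the natural strategy is to realize $(x,x^4)$ as a limit of \emph{componentwise products} of points on $\gamma_{2}$. The key algebraic identity is that $\gamma_{2}(\theta)=(\cos\theta,2\cos^{2}\theta-1)$, so the componentwise product of $N$ copies of $\gamma_{2}(\theta)$ is $(\cos^{N}\theta,(2\cos^{2}\theta-1)^{N})$. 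If I take two copies I already get $(\cos^{2}\theta,(2\cos^{2}\theta-1)^{2})$; writing $u=\cos^{2}\theta\in[0,1]$ this is the curve $(u,(2u-1)^{2})$, which is exactly the right-hand ``max curve'' branch $(2|x|-1)^2$ after reparametrization — not yet $x^4$.

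To get $x^4$ I would instead look at products of points on $\gamma_{2}$ with \emph{different} angles whose cosines multiply appropriately. First I would observe: for any finite set of angles $\theta_{1},\dots,\theta_{N}$, the componentwise product of $\gamma_{2}(\theta_{1}),\dots,\gamma_{2}(\theta_{N})$ is $\bigl(\prod_i\cos\theta_i,\ \prod_i(2\cos^2\theta_i-1)\bigr)$. So I need to choose angles so that $\prod_i\cos\theta_i=x$ while $\prod_i(2\cos^2\theta_i-1)$ is forced to equal $x^4$, at least in a limit. The clean way: take all $\cos\theta_i$ equal to a common value $c$ close to $1$, with $N$ chosen so that $c^{N}\approx x$; then $\prod_i(2c^2-1)^{N}=(2c^2-1)^{N}$. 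As $c\to1$, $2c^2-1 = c^2(2-1/c^2)\to 1$, and more precisely $2c^2-1 = 2c^2-1$ while $c^4$ — I would compare $(2c^2-1)^N$ with $(c^4)^N = c^{4N}\approx x^4$. Since $2c^2-1 < c^4$ fails and instead $2c^2-1 \le c^4$ for... actually one checks $c^4-(2c^2-1)=(c^2-1)^2\ge0$, so $2c^2-1\le c^4$ with equality only at $c=1$. Hence $(2c^2-1)^N \le c^{4N}$, and by taking $c\to 1$ with $N=N(c)$ so that $c^N\to x$ one finds $c^{4N}\to x^4$ but $(2c^2-1)^N$ could tend to something \emph{smaller}. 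So a single repeated angle overshoots downward; I need to combine angles near $1$ with a few ``correction'' angles. The fix is to use the closure: points $(x,y)$ with $y$ slightly below $x^4$ are obtained from repeated small angles, and one can nudge $y$ up by replacing some factors $\gamma_2(\theta_i)$ near the vertex with $\gamma_2(0)=(1,1)$; the combinatorial parameter $N$ versus the number of ``corrected'' slots lets me interpolate and hit $y=x^4$ exactly in the limit, using that $\Ac_2^0$ is closed.

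The cleanest route, which I would actually write up, is: fix $x\in(0,1)$ (the endpoints $x=0,1$ are trivial via $\gamma_2(\pi/2)$ and $\gamma_2(0)$), and for each integer $N\ge1$ let $c_N=x^{1/N}\to1$. Then $p_N:=\gamma_2(\arccos c_N)^{\,\cdot N}=(x,(2x^{2/N}-1)^{N})\in\Ac_2^0$. Compute $\lim_{N\to\infty}(2x^{2/N}-1)^{N}$: write $2x^{2/N}-1=1+2(x^{2/N}-1)=1+\tfrac{2}{N}\log x^2 + O(1/N^2)$, so $(2x^{2/N}-1)^N\to e^{2\log x^2}... $ wait, $e^{\lim N\cdot 2(x^{2/N}-1)} = e^{2\log x^2}=x^4$. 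So in fact $p_N\to(x,x^4)$ directly, with \emph{no} correction needed — the earlier worry about $2c^2-1\le c^4$ only says each finite $p_N$ lies weakly below the target, which is fine since we pass to the closed set $\Ac_2^0$. Thus $(x,x^4)\in\overline{\Ac_2^0}=\Ac_2^0$. The only genuinely careful step is the limit computation $N(x^{2/N}-1)\to\log x^2$ and justifying that the resulting measure is a genuine weak limit of square-free $\mu_n$ — but that is exactly the content of the description of $\Ac_2^0$ as the closed multiplicative set generated by $\gamma_2$, together with Lemma~\ref{lem:|thetap-theta|<eps} to realize each $\gamma_2(\arccos c_N)$ by an actual prime $p\equiv1\bmod4$ with $\theta_p$ close to $\arccos c_N$. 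The main obstacle is simply keeping the double limit (over the prime approximation and over $N$) organized; a diagonal argument handles it.
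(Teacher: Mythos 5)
Your argument is correct and is essentially the paper's proof under a change of notation: the paper fixes $\alpha>0$ and sends $x_n=e^{-\alpha/n}\to 1$ so that $x_n^n=e^{-\alpha}$, while you fix $x\in(0,1)$ and take $c_N=x^{1/N}\to 1$ so that $c_N^N=x$; the Taylor-expansion step $(2c_N^2-1)^N\to x^4$ and the appeal to closedness of the (square-free) attainable set are identical. The only cosmetic difference is that the paper writes the limit in terms of $e^{-\alpha}$ and $e^{-4\alpha}$, and it handles the boundary points $x=0,1$ and the reflection to $x<0$ in the same brief way you indicate.
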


\begin{proof}[Proof of Proposition \ref{prop:x>1/3 under max attainable} assuming Lemma
\ref{lem:x^4 attainable}]

By the definition of the max curve \eqref{eq:max curve def} it is sufficient to prove that if $(x_{0},y_{0})$
is lying under one of the curves $y=x^{4}$ and $y=(2|x|-1)^{2}$ then $(x_{0},y_{0})\in \Ac_{2}$
is attainable; with no loss of generality we may assume that $x_{0}\ge 0$.
Now we know that the parabola $\{ (t,2t^{2}-1)\}_{t\in [0,1]}$ is attainable, and
from Lemma \ref{lem:x^4 attainable} so is the curve $\{(x,x^{4})\}_{x\in [0,1]}$.

It then follows by multiplicativity of $\Ac_{2}$
that all the points of the form $$(x_{0},y_{0}) = (x,x^{4}) \cdot (t,2t^{2}-1) $$
are attainable (recalling the notation \eqref{eq:compwise prod} for componentwise multiplication).
On the other hand it is clear
that the union of
the family of the parabolas
$$
\{(xt,x^{4}(2t^{2}-1)) : t \in [0,1]    \},
$$
as  $x$ ranges over $[0,1]$,
is exactly the set
$$ \{(x,y):\: x\in [0,1], \, 2x^{2}-1 \le y\le x^{4}  \}.$$

Concerning points under the other curve $y=(2x-1)^2$
we may employ the multiplicativity of $\Ac_{2}$ again to yield that the curve
$$\{(x^{2},(2x^{2}-1)^{2})\}_{x\in [0,1]}$$
is attainable;  this curve in turn can be re-parameterized as $\{
(t,(2t-1)^{2})\}_{t\in[0,1]}$. A similar argument to the
above shows that function $$(x,t)\mapsto (x,(2x-1)^{2})\cdot
(t,2t^{2}-1)$$ 
maps
$[0,1]^{2}$ onto the domain $$\{(x,y):\: x\in [0,1],\,  2x^{2}-1 \le y\le (2x-1)^{2} \},$$
i.e. as the parameter $x$ varies along $[0,1]$ the parabolas $$\{(xt,(2x-1)^{2}\cdot (2t^{2}-1))\}$$
tessellate the domain under the curve $y=(2x-1)^{2}$, $x\in [0,1]$.
Hence all the points under the latter curve are attainable, as claimed.

\end{proof}

\begin{proof}[Proof of Lemma \ref{lem:x^4 attainable}]
We start with the case $x \geq 0$.
We know that the curve $\{(x, 2x^2-1)\}_{x\in [-1,1]}$ is attainable as a re-parametrization
of  $(\cos\theta, \cos 2 \theta)$ (i.e. all the points on that curve correspond
to attainable measures), hence for $n\ge 1$ the curve $\{(x^n, (2x^2-1)^n)\}$ is
attainable by the multiplicativity (cf. Proposition \ref{prop:A monoid}).
Fix $\alpha >0$, and take $x = x_n = e^{-\alpha/ n}$.
Thus
$$
( e^{-\alpha}, (2 e^{-2 \alpha/n} - 1)^n )
$$
is attainable for every $\alpha> 0$ and $n \ge 1$.

Upon using Taylor series, we find that, as $n \to \infty$,
$$
(2 e^{-2 \alpha/n} - 1)^n =
\left(2 \left(1-\frac{2\alpha}{n} + O\left(\frac{1}{n^2}
    \right)\right) - 1\right)^n =
$$
$$
\left(1 - \frac{4 \alpha}{n}
  +O\left(\frac{1}{n^2} \right)\right)^n
= e^{-4 \alpha} + o(1).
$$
Since this holds for any fixed $\alpha >0$, bearing in mind that $\Ac$ is closed
in $\Pc$ (and hence the set $\Ac_{2}\subseteq [-1,1]^{2}$ is closed in the usual sense), we indeed find that the
curve $(x,x^4)$ lies in the attainable set for every $x \in (0,1)$. It is
easy to see that also $(0,0)$ and $(1,1)$ are attainable.
By reflecting the curve $(x,x^{4})$ (for $x \geq 0$) in the $x$-axis
(using that $(-1,1)$ is attainable and multiplying) we find that
$(x,x^{4})$ is attainable for $x \in [-1,1]$.
\end{proof}

\section{Proof of Theorem \ref{thm:Ac2 top bar}: fractal structure for  $x<\frac{1}{3}$}

\label{sec:fractal x>1/3}

It is obvious that the second assertion of Theorem \ref{thm:Ac2 top bar}
implies the first part, so we only need to prove the second one.
However, since the proof of the second assertion is fairly complicated
we give a brief outline of how the first assertion can be deduced,
and then indicate how to augment the argument to give the second assertion.


We are to understand the closure of all the points $(x,y)$ of the form
\begin{equation}
\label{eq:(x,y)=GA prod}
(x,y)=\prod\limits_{i=1}^{K}(G_{A_{i}}(t_{i}),G_{A_{i}}(2t_{i}))
\end{equation}
with $A_{i}\ge 2$ arbitrary integers.
Using that $G_{A}(\pi/2+t)$ is either even or odd (depending on the
parity of $A$) and that $G_{A}(2(\pi/2+t))$ is even, together with
signs of $x$-coordinates
being irrelevant (since $(x,y)$ is attainable if and only if $(-x,y)$
is attainable) we may assume that $t_{i}\in
\left[0,\frac{\pi}{2}\right]$ for all $i$.  A curve
$(x_{0},y_{0})=(G_{A_{0}}(t_{0}),G_{A_{0}}(2t_{0}))$ turns out to
intersect the line $y=1$ with $|x|\le \frac{1}{3}$ only for $A_{0}$
odd, and further forces $t_{0}=\frac{\pi}{2}$, and $x=\pm
\frac{1}{A}$.  Hence the point $(x,y)$ as in \eqref{eq:(x,y)=GA prod}
satisfies $y=1$ only
for $A_{i}$ odd and $t_{i}=\frac{\pi}{2}$ for all $i\le K$, whence
$(x,y)=(\pm \frac{1}{A},1)$ with $A=\prod\limits_{i=1}^{K}A_{i}$.

To prove the second assertion we investigate a (fairly large)
neighborhood of the point $(\frac{1}{A},1)$; given an odd $A$ we
consider all finite products \eqref{eq:(x,y)=GA prod} with
$A=\prod_{i=1}^{K}A_{i}$ and $t_{i}\approx \frac{\pi}{2}$ (and $A_{i}
\geq 3$.) We will prove that all products $(x,y)$ of this form will
stay between two curves defined below; after taking logarithms this
will amount to the fortunate log-convexity of the curves
$(G_{A_{0}}(t),G_{A_{0}}(2t))$, $A_{0}\ge 3$ odd, in the suitable
range (see Lemma \ref{lem:GA curv convex} below). We argue that this
property is invariant with respect to multiplying by curves
$(G_{A_{1}}(t),G_{A_{1}}(2t))$ for $A_{1}\ge 2$ even, and also for odd
$A_{1} \ge 3$ for $t$ near $\pi/2$.

\subsection{Proof of the second assertion of Theorem \ref{thm:Ac2 top bar}}
To prove the main result of the present section we will need the
following results.

\begin{prop}
\label{prop:x<1/3 below mix cur}
Let $\{ A_{i}\}_{i}$ be a finite collection of integers $A_{i}\ge 2$,
and consider a point $(x,y)$ of the form
\begin{equation}
\label{eq:(x,y)=GA fin prod}
(x,y)  =
\left( \prod_{i} G_{A_{i}}(t_{i}), \prod_{i} G_{A_{i}}(2t_{i}) \right),
\end{equation}
where all $t_{i} \in [0, \pi/2]$. Assume that one of the following is satisfied:

\begin{itemize}
\item There exists $i$ such that $A_{i} \geq 3$ is odd and $t_{i} \in
  [\pi/(2A_{i}), \pi/2 - \pi/(2A_{i})]$.

\item There exists $i$ such that $A_{i}$ is even and $t_{i} \geq
  \pi/(2A_i)$.

\end{itemize}
Then necessarily
\begin{equation*}
y\le (2|x|^2-1).
\end{equation*}

\end{prop}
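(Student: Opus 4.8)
The plan is to peel off the distinguished ``bad'' factor, show that it already lies in $B_{1}\cup B_{2}$ (Notation \ref{not:B1 B2 def}), and then propagate this through the whole product by multiplicativity; this gives the conclusion in the form $y\le (2|x|-1)^{2}$ (the right‑hand side of the displayed bound — it cannot literally be $2|x|^{2}-1$, since every point of the form \eqref{eq:(x,y)=GA fin prod} is $\Fc_{2}$ of a probability measure in $\Pc$ and hence already satisfies $y\ge 2|x|^{2}-1$). Let $j$ be an index for which one of the two bullet hypotheses holds, set $p_{j}=\gamma_{2;A_{j}}(t_{j})=(G_{A_{j}}(t_{j}),G_{A_{j}}(2t_{j}))$, and observe that each remaining factor $p_{i}=\gamma_{2;A_{i}}(t_{i})$ equals $\Fc_{2}(\upsilon_{t_{i};A_{i}-1})$, hence lies in $\Pc_{2}$, while $\Pc_{2}$ is closed under componentwise products because $\Pc$ is closed under convolution. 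So it suffices to prove (i) $p_{j}\in B_{1}\cup B_{2}$, and (ii) $B_{1}\cup B_{2}$ is absorbing under multiplication by $\Pc_{2}$, i.e. $p\cdot q\in B_{1}\cup B_{2}$ whenever $p\in B_{1}\cup B_{2}$ and $q\in\Pc_{2}$: then $(x,y)=p_{j}\cdot\prod_{i\ne j}p_{i}\in B_{1}\cup B_{2}$, and since $B_{1}\subseteq\{y\le (2|x|-1)^{2}\}$ by definition and $B_{2}\subseteq\{y\le 0\}\subseteq\{y\le (2|x|-1)^{2}\}$, we are done.

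Step (ii) is soft. With $p=(p_{x},p_{y})\in B_{1}\cup B_{2}$ and $q=(q_{x},q_{y})\in\Pc_{2}$ we have $pq\in\Pc_{2}$. If $p_{y}q_{y}\le 0$ then $pq\in\Pc_{2}\cap\{y\le 0\}=B_{2}$ by Lemma \ref{lem:(x,y),y<0=>(x,y)in B}. Otherwise $p_{y},q_{y}$ are nonzero of the same sign. If both are positive, then $p\in B_{1}$ (it cannot lie in $B_{2}$), and since $|q_{x}|\le 1$ and $0\le q_{y}\le 1$ the map $(x,y)\mapsto (q_{x}x,q_{y}y)$ sends $B_{1}$ into itself (as observed in the proof of Lemma \ref{lem:mix sign in B1 or B2}), so $pq\in B_{1}$. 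If both are negative, then $p\in B_{2}$ and $q\in\Pc_{2}\cap\{y<0\}\subseteq B_{2}$, whence $pq\in B_{1}$ by Lemma \ref{eq:p1,p2 in B=> p1p2 in A}.

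Step (i) is the substance. Fix $A=A_{j}$ and $t=t_{j}$ and put $(x,y)=\gamma_{2;A}(t)\in\Pc_{2}$. If $y=G_{A}(2t)\le 0$, then $(x,y)\in\Pc_{2}\cap\{y\le 0\}=B_{2}$ by Lemma \ref{lem:(x,y),y<0=>(x,y)in B} and we are done; so assume $y>0$, and we must show $(x,y)\in B_{1}$. A sign analysis of $\sin(2At)$ shows that, under the standing constraint $t\ge\pi/(2A)$, the positivity $G_{A}(2t)>0$ forces $At\in\bigcup_{m\ge 1}(m\pi,m\pi+\pi/2)$, so $t>\pi/A$ and $\sin t\ge\sin(\pi/A)$; hence $|x|=|G_{A}(t)|\le\frac{1}{A\sin(\pi/A)}\le\frac12$, using that $A\mapsto A\sin(\pi/A)$ is increasing with value $2$ at $A=2$. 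It remains to establish
\begin{equation*}
G_{A}(2t)\le\bigl(2\,|G_{A}(t)|-1\bigr)^{2} .
\end{equation*}
Substituting $s=At-m\pi\in(0,\pi/2)$, so that $t=(m\pi+s)/A$, one has the identities $|G_{A}(t)|=\frac{\sin s}{A\sin t}$ and $G_{A}(2t)=|G_{A}(t)|\cdot\frac{\cos s}{\cos t}$, which turn the claim into a one‑variable inequality in $s$ whose two sides are correctly ordered at the endpoints $s=0$ and $s=\pi/2$ (where $G_{A}(2t)=0$). This last inequality is the main obstacle; I would first reduce to $m=1$ — for larger $m$, $t$ lies closer to $\pi/2$, so $|G_{A}(t)|$ is smaller and $(2|G_{A}(t)|-1)^{2}$ larger, which only helps — and then produce a monotone auxiliary function on $(0,\pi/2)$, in the spirit of the function $h$ of Lemma \ref{lem:key}, yielding the bound for the remaining one‑parameter family of curves.
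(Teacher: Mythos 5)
Your reduction framework is sound and essentially parallels the paper's: the paper likewise isolates the distinguished factor (Lemma \ref{lem:below mix cur single} shows that it satisfies either $y_j\le 0$ or $y_j\le(2|x_j|-1)^2$ with $|x_j|<1/3$) and then propagates the bound through the product via Lemma \ref{lem:not tot pos under max curve} and the monotonicity of $x\mapsto(2x-1)^2$ on $[0,1/2]$; your absorption argument for $B_1\cup B_2$ under multiplication by $\Pc_2$ is a clean packaging of the same mechanism, and your reading of the displayed bound as $(2|x|-1)^2$ (a typo in the statement) is correct. The derivation of $|x_j|\le 1/2$ in the positive-$y_j$ case is also fine.

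However, there is a genuine gap at exactly the point you flag as ``the main obstacle'': the inequality $G_A(2t)\le(2|G_A(t)|-1)^2$ for the distinguished factor when $G_A(2t)>0$ is never proved — you only announce a plan. This inequality is the entire technical content of the proposition; in the paper it is Lemma \ref{lem:below mix cur single}, whose proof occupies all of Appendix \ref{apx:below mixed big small} and proceeds by a delicate case split on subranges of $t$ (most subranges force $y\le 0$; the range $t\in[\pi/A,3\pi/(2A)]$ requires comparing $G_A$ with $G_4$ via the monotonicity of $\sin t/t$ and then factoring $G_4(s)=G_2(s)G_2(2s)$ to land in the mixed-sign case; the range near $\pi/2-\pi/(2A)$ for $A$ even requires the factorization $G_{2B}=G_B\cdot G_2$ and explicit numerical bounds). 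Moreover, the one reduction you do sketch — ``reduce to $m=1$, since larger $m$ only helps'' — is not valid as stated: writing $At=m\pi+s$, increasing $m$ at fixed $s$ pushes $t$ toward $\pi/2$, which indeed decreases $|G_A(t)|=\sin s/(A\sin t)$ and hence increases the right-hand side $(2|G_A(t)|-1)^2$, but it also decreases $\sin 2t$ and hence \emph{increases} the left-hand side $G_A(2t)=\sin 2s/(A\sin 2t)$, so both sides move in the same direction and no comparison follows without estimating the rates. Note also that for odd $A$ the inequality genuinely fails at $t=\pi/2$ (where $(|x|,y)=(1/A,1)$), so the upper cutoff $t\le\pi/2-\pi/(2A)$ must enter the argument explicitly; in your parametrization it silently restricts the admissible $m$, but this is never used. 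As it stands the proposal establishes the soft outer layer of the proof but not its core.
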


The proof of Proposition \ref{prop:x<1/3 below mix cur} is
postponed to Appendix
\ref{apx:below mixed big small}.

\begin{prop}
\label{prop:f2 finite}

Let $A\ge 3$ be an odd number, and $$A=\prod\limits_{i=1}^{K}A_{i}$$ an arbitrary (fixed) factorization of $A$
into (not necessarily co-prime) integers $A_{i}\ge 3$. For $x\le \frac{1}{A}$ define
\begin{equation}
\label{eq:gAi(x) def}
g_{\{A_{i}\}}(x) = \sup\limits_{(t_{i})_{i} \in \mathcal{X}_{\{A_{i}\}}(x)} \prod\limits_{i=1}^{K} G_{A_{i}}(2t_{i}),
\end{equation}
the supremum taken w.r.t. all $(t_{i})_{i\le K}$ lying in
\begin{equation}
\label{eq:Xc def}
\mathcal{X}_{\{A_{i}\}}(x):= \left\{(t_{i})_{i}:\forall i\le K, \, t_{i}\in \left[\frac{\pi}{2}-\frac{\pi}{2A_{i}},\pi/2\right],
\, \left|\prod\limits_{i=1}^{K}G_{A_{i}}(t_{i})\right| = x \right\}.
\end{equation}
Then for every $0<x < \frac{1}{A}$ there exists an index
$i_{0}=i_{0}(x)\le K$ and
$t\in [\frac{\pi}{2}-\frac{\pi}{2A_{i}},\pi/2]$ such
that\footnote{The reason for
  $\frac{A_{i_{0}}}{A}|G_{A_{i_{0}}}(t)|$ appearing is that the
  supremum is attained by having $t_{i}=0$ for $i \neq i_{0}$ and
  hence $\prod_{i\neq i_{0}} G_{A_{i}}(0) = \prod_{i\neq i_{0}}
  1/A_{i} = A_{i_{0}}/A $.}
$$(x,g_{\{A_{i}\}}(x)) = \left(\frac{A_{i_{0}}}{A}|G_{A_{i_{0}}}(t)|,G_{A_{i_{0}}}(2t)\right),$$
and moreover the map $x\mapsto i_{0}(x)$ is piecewise
constant.
In particular, the function $g_{\{A_{i}\}}(x)$ is continuous,
analytic in some (left) neighbourhood of $x=\frac{1}{A}$, and
piecewise analytic on $(0,\frac{1}{A}]$.
\end{prop}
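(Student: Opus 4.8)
The plan is to analyze the constrained optimization problem \eqref{eq:gAi(x) def} directly and exploit the rigidity of the constraint region $\mathcal{X}_{\{A_{i}\}}(x)$. First I would record the basic facts about $G_{A}$ near $\pi/2$: for odd $A\ge 3$, on the interval $[\frac{\pi}{2}-\frac{\pi}{2A},\frac{\pi}{2}]$ the map $t\mapsto |G_{A}(t)|$ is continuous, and $G_{A}(\pi/2)=\pm\frac{1}{A}$ (the sign is $(-1)^{(A-1)/2}$), so that $|G_{A}(\pi/2)|=\frac{1}{A}$; moreover on this interval $|G_{A}(t)|$ decreases from $|G_{A}(\frac{\pi}{2}-\frac{\pi}{2A})|$ down to $\frac{1}{A}$ (this uses a monotonicity statement of the type already proved in Lemma \ref{lem:x-bigger-one-third} and the sinc bound of Lemma \ref{lem:sinc-decreasing}, applied to the relevant range; the point is that as $t\to\pi/2$ from below we hit the minimum $\frac1A$). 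Consequently each factor $|G_{A_{i}}(t_{i})|$ ranges over $[\frac{1}{A_{i}}, c_{i}]$ for some $c_{i}>\frac1{A_i}$, and the product $x=\prod_i |G_{A_i}(t_i)|$ can indeed take any value in $(0,\frac1A]$ near $\frac1A$; the constraint set $\mathcal{X}_{\{A_i\}}(x)$ is compact and nonempty, so the supremum in \eqref{eq:gAi(x) def} is attained.

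Next I would prove the structural claim that at the maximizer all but one of the $t_i$ are extremal. Fix $x$ and take a maximizing tuple $(t_i)$. I claim one can push every index except one to $t_i=\frac{\pi}{2}$ (equivalently $|G_{A_i}(t_i)|=\frac1{A_i}$): the key is the log-convexity/monotonicity input announced in the outline (Lemma \ref{lem:GA curv convex}), which says that along each curve $(G_{A_i}(t),G_{A_i}(2t))$ in the relevant range near $\pi/2$ the quantity $\log G_{A_i}(2t)$ is a convex function of $\log|G_{A_i}(t)|$. Given two ``free'' indices $i\ne j$, holding the product of their $x$-contributions fixed while redistributing, convexity of $\log G(2\cdot)$ in $\log|G(\cdot)|$ forces the optimum to the boundary of that two-variable slice, i.e.\ one of the two becomes extremal. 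Iterating, at most one index $i_0$ is non-extremal, and for all $i\ne i_0$ we have $|G_{A_i}(t_i)|=\frac1{A_i}$, hence $\prod_{i\ne i_0}G_{A_i}(2t_i)=\prod_{i\ne i_0}G_{A_i}(\pi)$; since $G_{A}(\pi)=\frac{\sin(A\pi)}{A\sin\pi}=(-1)^{A+1}=1$ for odd $A$ (taking the limit), this product is $1$. Therefore $g_{\{A_i\}}(x)=G_{A_{i_0}}(2t)$ with $t=t_{i_0}$, and the surviving constraint is $\frac{1}{A}\cdot\frac{A_{i_0}}{1}\cdot|G_{A_{i_0}}(t)| \cdot\frac1{A_{i_0}}\cdot A_{i_0}$... more cleanly: $x=|G_{A_{i_0}}(t)|\cdot\prod_{i\ne i_0}\frac1{A_i}=\frac{A_{i_0}}{A}|G_{A_{i_0}}(t)|$, which is exactly the asserted identity $(x,g_{\{A_i\}}(x))=\big(\frac{A_{i_0}}{A}|G_{A_{i_0}}(t)|,\,G_{A_{i_0}}(2t)\big)$.

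Finally I would deduce the regularity statements. For each fixed index $j$ and each $x$, let $t_j(x)$ be the point of $[\frac{\pi}{2}-\frac{\pi}{2A_j},\frac{\pi}{2}]$ with $\frac{A_j}{A}|G_{A_j}(t_j(x))|=x$ — well-defined and real-analytic in $x$ by the strict monotonicity of $|G_{A_j}|$ on that interval and the analytic implicit function theorem (the derivative of $|G_{A_j}|$ is nonzero there except possibly at isolated points, which one checks does not obstruct a piecewise-analytic inverse). Set $\phi_j(x):=G_{A_j}(2t_j(x))$, a piecewise-analytic function of $x$ on $(0,\frac1A]$, analytic near $x=\frac1A$. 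By the previous paragraph $g_{\{A_i\}}(x)=\max_{1\le j\le K}\phi_j(x)$ for $x$ close to $\frac1A$ (and in general the sup is attained by one of the $\phi_j$, with the proviso that we only compete indices for which $\frac{A_j}{A}\ge x$, i.e.\ for which the equation is solvable). A finite maximum of piecewise-analytic functions is piecewise analytic, and the index $j=i_0(x)$ achieving it is piecewise constant; continuity of $g_{\{A_i\}}$ follows since each $\phi_j$ is continuous. For analyticity near $x=\frac1A$ one notes that there $\phi_j(\frac1A)=G_{A_j}(\pi)=1$ for every $j$, and a short computation of $\phi_j'(\frac1A)$ (using $t_j(\frac1A)=\frac\pi2$ and the Taylor expansion of $G_{A_j}$ at $\pi/2$) shows the $\phi_j$ are ordered near $\frac1A$ with a unique largest one, so $g_{\{A_i\}}$ coincides with a single analytic branch there.

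The main obstacle I anticipate is the ``redistribution'' step in the middle paragraph: making the convexity argument (Lemma \ref{lem:GA curv convex}) rigorous requires care about the parametrization — $|G_{A_i}|$ is not globally monotone, so one must stay within the designated window $[\frac{\pi}{2}-\frac{\pi}{2A_i},\frac{\pi}{2}]$ where it is, and check that the convex-combination move keeps all coordinates in their windows while not increasing $x$-error. A secondary technical nuisance is the boundary bookkeeping: which indices are eligible to be the ``free'' one $i_0$ depends on $x$ (only those with $\frac{A_i}{A}\ge x$), and handling the transition points where $i_0(x)$ jumps is exactly what forces the ``piecewise'' qualifier rather than global analyticity.
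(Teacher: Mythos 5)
Your proposal follows essentially the same route as the paper: take logarithms, use the log-convexity of each curve $\eta_{A_i}$ (Lemma \ref{lem:GA curv convex}), and exploit the fact that a convex function on a compact simplex attains its maximum at a vertex. The paper packages the boundary-pushing step as a clean standalone statement (Lemma \ref{lem:max conv inside}): the sum $\sum_i h_i(z_i)$ is convex on the simplex $\{z_i\le 0, \sum z_i = z\}$, hence maximized at a vertex where all but one coordinate is $0$. Your pairwise-redistribution argument is the same convexity fact applied two coordinates at a time, and with the same conclusion. Both then translate $z_{i}=0$ back to $t_i=\pi/2$, $G_{A_i}(2t_i)=G_{A_i}(\pi)=1$, and both deduce piecewise constancy of $i_0(x)$ by observing that $g$ is the max of finitely many analytic branches that can cross only finitely often.

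Two small corrections. First, the direction of monotonicity: on $\left[\frac{\pi}{2}-\frac{\pi}{2A},\frac{\pi}{2}\right]$ (with $A$ odd) the quantity $|G_A(t)|$ \emph{increases} from $0$ at the left endpoint to $\frac{1}{A}$ at $\pi/2$, not ``decreases from $|G_A(\frac{\pi}{2}-\frac{\pi}{2A})|$ down to $\frac1A$'' as you wrote; this does not affect the rest of the argument, but it should be stated correctly. Second, your concern at the end about ``eligibility'' of indices is unfounded: on the designated window, $|G_{A_i}(t_i)|$ ranges over $\left[0,\frac{1}{A_i}\right]$, so $\frac{A_i}{A}|G_{A_i}(t)|$ ranges over $\left[0,\frac{1}{A}\right]$ and the equation $\frac{A_i}{A}|G_{A_i}(t)| = x$ is solvable for every $x\in\left(0,\frac{1}{A}\right]$ and every index $i$. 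So the ``piecewise'' qualifier comes purely from the crossings of the finitely many analytic branches, exactly as the paper argues.
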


We may finally define the function $f_{2;k}$ introduced in Theorem
\ref{thm:Ac2 top bar}.

\begin{definition}
\label{def:f2 def}
Given $k\ge 1$ define
\begin{equation*}
f_{2;k}(x)=\max\limits_{\prod\limits_{i=1}^{K}A_{i}=2k+1} g_{\{A_{i}\}}(x),
\end{equation*}
the maximum taken w.r.t. all non-trivial factorizations of $2k+1$, i.e.,
all sets of (odd) integers $\{ A_{i}\}_{i=1}^{K}\subseteq\Z_{\ge 3}$, whose product is $2k+1$.
\end{definition}

\begin{rem}Recall the assumption that $0 < x < 1/3$.

\begin{enumerate}

\item By the definition of $g_{\{A_{i}\}}$ and $f_{2;k}$, if $(x,y)$ is of the form
$$(x,y)=\prod\limits_{i=1}^{K}(|G_{A_{i}}(t_{i})|,G_{A_{i}}(2t_{i}))$$
with all $A_{i}\ge 3$ odd, then necessarily
\begin{equation}
\label{eq:fin prod y<=f2(x)}
y\le g_{\{A_{i}\}_{i\le K}}(x) \le f_{2;k}(x),
\end{equation}
where $k$ is defined as in $$\prod\limits_{i=1}^{K}A_{i}=2k+1.$$

\item Proposition \ref{prop:f2 finite} implies that for $k\ge 1$ and $x<\frac{1}{2k+1}$,
\begin{equation*}
f_{2;k}(x)=\max\limits_{1<A | 2k+1 }\max\limits_{\left\{t: \left|\frac{A}{2k+1}G_{A}(t)\right|=x\right\} } G_{A}(2t),
\end{equation*}
a maximum w.r.t. all (odd) divisors $A>1$ of $2k+1$; the latter yields
an algorithm
for computing $f_{2;k}(x)$, reducing the original problem into
maximizing a finite set of numbers.

\end{enumerate}

\end{rem}

The following $3$ results will be proven in Appendix \ref{apx:GA curv convex}.

\begin{lem}
\label{lem:GA analyt par}
Let $A\ge 3$ be an odd integer, and $\eta_{A}$ be the parametric curve in $\R^{2}$ defined by
\begin{equation}
\label{eq:etaA def}
\eta_{A}(t) = (\eta_{A;1}(t),\eta_{A;2}(t))=(\log(A\cdot |G_{A}(t)|),\log(G_{A}(2t))),
\end{equation}
for $t\in \left(\frac{\pi}{2}-\frac{\pi}{2A},\frac{\pi}{2}\right]$.
Then we may re-parameterize $\eta$ as $(z,h_{A}(z))$ for some analytic function $h:(-\infty,0)\rightarrow \R_{\le 0}$ with $h(0)=0$,
and moreover $0< h'(z) \le \frac{4}{3}$ everywhere in the above range.
\end{lem}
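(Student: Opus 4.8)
The goal is to study the parametric curve $\eta_A(t) = (\eta_{A;1}(t), \eta_{A;2}(t))$ for $t$ in the interval $(\frac\pi2 - \frac{\pi}{2A}, \frac\pi2]$, show it is a graph $(z, h_A(z))$ over $z \in (-\infty, 0)$ with $h_A$ analytic, $h_A(0)=0$, and $0 < h_A'(z) \le \tfrac43$. The first step is to understand the endpoint behavior: at $t = \pi/2$ we have $A|G_A(\pi/2)| = A \cdot \frac{|\sin(A\pi/2)|}{A} = 1$ (using that $A$ is odd, so $\sin(A\pi/2) = \pm 1$), hence $\eta_{A;1}(\pi/2) = 0$; and $G_A(2 \cdot \pi/2) = G_A(\pi) = \frac{\sin(A\pi)}{A\sin\pi}$, which by L'Hôpital (or the explicit formula $\upsilon_{\theta;A-1}$) equals $1$, so $\eta_{A;2}(\pi/2) = 0$. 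That gives $h_A(0) = 0$. As $t \to (\frac\pi2 - \frac{\pi}{2A})^+$, the argument $At \to (A-1)\frac\pi2^-$, and since $A-1$ is even, $\sin(At) \to 0$, so $|G_A(t)| \to 0$ and $\eta_{A;1}(t) \to -\infty$; one checks $G_A(2t)$ stays bounded away from $0$, so $\eta_{A;2}$ stays finite. Thus the $z$-range is $(-\infty, 0)$ as claimed.

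The second step is to show $\eta_{A;1}$ is a strictly monotone (hence invertible) analytic function of $t$ on the open interval, so that we can legitimately re-parameterize by $z = \eta_{A;1}(t)$; analyticity of $h_A$ then follows from the analytic inverse function theorem, provided $\eta_{A;1}'(t) \neq 0$ throughout. We have $\eta_{A;1}'(t) = \frac{d}{dt}\log|G_A(t)| = A\cot(At) - \cot(t)$ (with appropriate signs), and I would show this is nonzero — in fact negative — on the relevant range, which amounts to a trigonometric inequality comparing $A\cot(At)$ with $\cot t$ for $t$ just below $\pi/2$; near $t = \pi/2$, $\cot t \to 0^-$ while $A\cot(At) \to A\cot((A-1)\pi/2 + \text{small}) \to -\infty$ since $\cot$ has a pole. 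So $\eta_{A;1}$ is decreasing in $t$, i.e. increasing as $t$ decreases; I should track orientation carefully so that $z$ ranges over $(-\infty,0)$ with $t=\pi/2$ corresponding to $z=0$.

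The third and decisive step is the derivative bound $0 < h_A'(z) \le \tfrac43$. By the chain rule $h_A'(z) = \eta_{A;2}'(t)/\eta_{A;1}'(t)$, where $\eta_{A;2}'(t) = 2A\cot(2At) - 2\cot(2t)$ (again up to signs from absolute values). Positivity of $h_A'$ means $\eta_{A;1}'$ and $\eta_{A;2}'$ have the same sign; the bound $h_A' \le \tfrac43$ is equivalent to $3\,\eta_{A;2}'(t) \le 4\,\eta_{A;1}'(t)$ with the correct sign convention — i.e. the explicit inequality
\begin{equation*}
3\bigl(A\cot(2At) - \cot(2t)\bigr) \ge \tfrac{?}{}\, 2\bigl(A\cot(At) - \cot(t)\bigr)
\end{equation*}
on the range $t \in (\frac\pi2 - \frac\pi{2A}, \frac\pi2]$, which after clearing denominators becomes a polynomial trigonometric inequality that I would reduce, via substitution $s = \pi/2 - t$ (so $s \in [0, \pi/(2A))$ small) and the identities $\cot(\pi/2 - s) = \tan s$, to an inequality purely in $\tan s$ and $\tan(As)$ that holds with equality at $s=0$ and can be verified by comparing power series / showing a single auxiliary function is monotone. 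I expect this last trigonometric estimate — establishing the sharp constant $4/3$ at the corner and that it is not exceeded elsewhere on the interval — to be the main obstacle; the slope $4/3$ presumably arises precisely from the leading Taylor coefficients at $s = 0$ (matching the claimed $f_2'(x_k) = \frac43(2k+1)$ after unwinding the re-parameterization), so the delicate part is controlling the higher-order terms uniformly over the whole range of $t$ rather than just near the corner. I would handle that by writing the difference $4\,\eta_{A;1}'(t) - 3\,\eta_{A;2}'(t)$ (with signs fixed), showing it vanishes to the right order at $t = \pi/2$ and is sign-definite via its own derivative, which should again collapse to the fact that $\tan$ exceeds its argument.
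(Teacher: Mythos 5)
Your first two steps (endpoint behaviour, monotonicity of $\eta_{A;1}$, invertibility and analyticity of $h_A$) are fine and match the setup in the paper. The issue is the third step, which you acknowledge as the main obstacle but do not actually close.

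The paper proves this lemma as an immediate corollary of Lemma~\ref{lem:GA curv convex}: that lemma establishes that both components of $\eta_A$ are strictly decreasing and — crucially — that $h_A$ is \emph{convex}. Once convexity is in hand, the bound $0<h_A'(z)\le\tfrac43$ is automatic: the derivative of a convex function is non-decreasing, so for $z\le 0$ one has $h_A'(z)\le h_A'(0)$, and a direct leading-order Taylor expansion at the corner gives $h_A'(0)=\tfrac43$. So the paper buys the slope bound essentially for free from convexity. You instead propose to prove the slope bound directly, by establishing the trigonometric inequality $3\eta_{A;2}'(t)\ge 4\eta_{A;1}'(t)$ (in the sign convention where both derivatives are negative) on the whole interval, bypassing convexity.

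That is a genuinely different route, and in principle a logically weaker target than convexity — so it \emph{could} be easier — but your proposal substantially underestimates the difficulty and leaves the decisive step unproven. You assert the inequality ``should again collapse to the fact that $\tan$ exceeds its argument.'' This is not correct: the analogous monotonicity/convexity estimate is the content of Lemma~\ref{lem:GA curv convex}, whose proof in Appendix~\ref{apx:GA curv convex} requires an extended computation involving the infinite-product expansions of $\sin$ and $\cos$, Taylor coefficients expressed through $\zeta(2j)$ and $\zeta^*(2i)$, a reduction to the non-positivity of the auxiliary function $q(s)$ in \eqref{eq:boring h def}, and a delicate alternating-series estimate (Lemma~\ref{lem:h(s) tedious <=0}). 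Nothing about that argument reduces to $\tan t>t$, and there is no reason to expect your variant $3\eta_{A;2}'-4\eta_{A;1}'\ge 0$ (which, by the expansions above, vanishes to third order at $s=\pi/2-t=0$ with positive leading coefficient $\tfrac{4}{15}(A^4-1)$) to be any lighter: it involves the same competing pole terms in $\cot(2As)$, $\cot(2s)$, $\tan(As)$, $\tan s$, uniformly in $A$ and over the whole interval. As written, the proposal has a real gap: the bound $h_A'\le\tfrac43$ — the heart of the lemma — is reduced to an inequality that is neither proved nor plausibly of the claimed simplicity. Either you need to carry out an estimate of comparable weight to Appendix~\ref{apx:GA curv convex}, or (as the paper does) you should first establish convexity of $h_A$ and then obtain the slope bound as a one-line consequence.
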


\begin{cor}
\label{cor:y^4/3>=1/Ax}
Let $\{ A_{i}\}_{i=1}^{K}\subseteq \Z_{\ge 3}$ be a set of odd integers, $A=\prod\limits_{i=1}^{K}A_{i}$, and $(x,y)$ of the form
\begin{equation*}
(x,y) = \prod\limits_{i=1}^{K}(G_{A_{i}}(t_{i}),G_{A_{i}}(2t_{i})),
\end{equation*}
such that for all $i\le K$ we have $t_{i}\in \left[\frac{\pi}{2}-\frac{1}{2A_{i}},\frac{\pi}{2}\right]$.
Then necessarily $$ y\ge (Ax)^{4/3} .$$
\end{cor}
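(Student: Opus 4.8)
\textbf{Proof plan for Corollary \ref{cor:y^4/3>=1/Ax}.}
The plan is to reduce the statement about the $K$-fold product to a statement about the individual factor curves $\eta_{A_i}$ from Lemma \ref{lem:GA analyt par}, and then exploit log-convexity. First I would take logarithms: writing $(x,y) = \prod_{i=1}^K (G_{A_i}(t_i), G_{A_i}(2t_i))$, set $u_i := \log(A_i |G_{A_i}(t_i)|) = \eta_{A_i;1}(t_i) \le 0$ and $v_i := \log(G_{A_i}(2t_i)) = \eta_{A_i;2}(t_i) \le 0$, so that
\begin{equation*}
\log(Ax) = \log\Bigl(\prod_i A_i |G_{A_i}(t_i)|\Bigr) = \sum_{i=1}^K u_i, \qquad \log y = \sum_{i=1}^K v_i.
\end{equation*}
(Here I should note at the outset that $x>0$ forces each $G_{A_i}(t_i)\ne 0$, and that for $t_i$ in the given range around $\pi/2$ the quantities $|G_{A_i}(t_i)|$ are in $(0,1/A_i]$ and $G_{A_i}(2t_i)\in(0,1]$, so all logarithms are well-defined and $\le 0$; this uses the sign analysis of $G_A$ near $\pi/2$ already implicit in the setup of Lemma \ref{lem:GA analyt par}.) By that lemma, for each $i$ we may re-parameterize the curve $\eta_{A_i}$ as $(z, h_{A_i}(z))$ with $h_{A_i}$ analytic on $(-\infty,0]$, $h_{A_i}(0)=0$, and $0 < h_{A_i}'(z) \le 4/3$ throughout. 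Hence $v_i = h_{A_i}(u_i)$.

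The key step is the elementary inequality that any $C^1$ function $h$ on $[u,0]$ with $h(0)=0$ and $0 < h'(z) \le 4/3$ satisfies $h(u) \ge \tfrac{4}{3} u$ for $u \le 0$ — indeed, $h(u) = -\int_u^0 h'(z)\,dz \ge -\int_u^0 \tfrac43\,dz = \tfrac43 u$ since $h'\le \tfrac43$ and $u\le 0$. Applying this with $h = h_{A_i}$ and $u = u_i$ gives $v_i \ge \tfrac{4}{3} u_i$ for every $i$. Summing over $i$ yields
\begin{equation*}
\log y = \sum_{i=1}^K v_i \ \ge\ \frac{4}{3}\sum_{i=1}^K u_i \ =\ \frac{4}{3}\log(Ax),
\end{equation*}
and exponentiating gives $y \ge (Ax)^{4/3}$, as desired. (Strictly, I should also dispose of the degenerate case where some $G_{A_i}(t_i)$ or $G_{A_i}(2t_i)$ equals zero: if any $G_{A_i}(2t_i)=0$ then $y=0$ while one can check $Ax=0$ as well, or the hypothesis $t_i$ near $\pi/2$ rules this out; the honest version is that on the stated $t$-range the relevant quantities are strictly positive, matching the domain of $h_{A_i}$ in Lemma \ref{lem:GA analyt par}.)

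The substantive content — the bound $h_{A_i}' \le 4/3$ — is entirely packaged into Lemma \ref{lem:GA analyt par}, which is proven in Appendix \ref{apx:GA curv convex}, so within this corollary there is no real obstacle: the argument is just ``sum of lower bounds for each factor, then exponentiate.'' The one point that requires a little care, and which I would state explicitly rather than gloss over, is the bookkeeping of signs and the reconciliation of the absolute value $|G_{A_i}(t_i)|$ in the definition of $x$ (via $\eta_{A_i;1}$) with the unsigned product $\prod_i G_{A_i}(t_i)$ defining the $x$-coordinate; since the corollary's conclusion involves only $Ax$ and $y$ and the ambient set $\Ac_2$ is invariant under $(x,y)\mapsto(-x,y)$, one may assume $x \ge 0$ and absorb all signs into the absolute values, so that $Ax = \prod_i A_i|G_{A_i}(t_i)| = \exp(\sum_i u_i)$ exactly. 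With that normalization in place the computation above goes through verbatim.
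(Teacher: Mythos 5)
Your proof is correct and takes essentially the same route as the paper: both reduce the $K$-fold product to per-factor inequalities funneled through Lemma \ref{lem:GA analyt par} and exponentiate. The only cosmetic difference is that you obtain $h_{A_i}(u)\ge \tfrac43 u$ by integrating the global slope bound $h_{A_i}'\le 4/3$, whereas the paper invokes convexity of $h_{A_i}$ together with the tangent slope $h_{A_i}'(0)=4/3$ at the origin — two phrasings of the same fact, both supplied by the same lemma.
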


\begin{lem}
\label{lem:convexity log(2exp(2z)-1)}
For every $x_{1},x_{2} \in [0,1]$ the following inequality holds:
\begin{equation}
\label{eq:convexity log(2exp(2z)-1)}
(2x_{1}^{2}-1)\cdot (2x_{2}^{2}-1) \ge (2(x_{1}x_{2})^{2}-1).
\end{equation}
\end{lem}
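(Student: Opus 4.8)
The plan is to reduce the claimed inequality to an obvious one by the substitution $u := x_{1}^{2}$, $v := x_{2}^{2}$, so that $u,v \in [0,1]$. With this notation \eqref{eq:convexity log(2exp(2z)-1)} becomes
\[
(2u-1)(2v-1) \ge 2uv - 1 .
\]
Expanding the left-hand side gives $4uv - 2u - 2v + 1$, so after subtracting $2uv-1$ from both sides the inequality is equivalent to
\[
2uv - 2u - 2v + 2 \ge 0,
\]
i.e. to $(1-u)(1-v) \ge 0$. Since $u,v \le 1$ this last inequality holds, with equality precisely when $u=1$ or $v=1$; translating back, equality in \eqref{eq:convexity log(2exp(2z)-1)} holds exactly when $x_{1}=1$ or $x_{2}=1$.

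There is essentially no obstacle here: the whole content is the one-line factorization $(1-u)(1-v)\ge 0$ once the squares are renamed. For context (and to motivate the label of the lemma), one may note that writing $x_{i}=e^{z_{i}}$ with $z_{i}\le 0$ the inequality says that $z\mapsto \log(2e^{2z}-1)$ is superadditive on the relevant range; but the direct algebraic verification above is the cleanest route and is all that is needed in the sequel.
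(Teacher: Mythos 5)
Your proof is correct and is essentially the same computation as the paper's: the paper records the identity $(2x_{1}^{2}-1)(2x_{2}^{2}-1)-(2(x_{1}x_{2})^{2}-1) = 2(x_{1}^{2}-1)(x_{2}^{2}-1)$, which is exactly your $(1-u)(1-v)\ge 0$ after the substitution $u=x_1^2$, $v=x_2^2$.
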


We are finally in a position to prove Theorem \ref{thm:Ac2 top bar}
(with the first assertion following from the second.)

\begin{proof}[Proof of the second assertion of Theorem \ref{thm:Ac2 top bar}
assuming the results above]

We first prove that any point $(x,y)\in \Ac_{2}$ with $0<x<\frac{1}{3}$ either satisfies $y\le (2x-1)^{2}$ or
$(x,y)\in \Dc_{0,x_{k}}(f_{1;k},f_{2;k})$ for some $k\ge 1$, i.e. establish the inclusion $\subseteq$ of \eqref{eq:Ac2=mix, spikes}.
Since $\Ac_{2}$ is the closure (in $\R^{2}$) of the set of finite products
\begin{equation}
\label{eq:(x,y)=fin prod GA}
(x,y)=\prod\limits_{i=1}^{K}(G_{A_{i}}(t_{i}),G_{A_{i}}(2t_{i})),
\end{equation}
with some $A_{i}\ge 2$, $t_{i}\in [0,\pi]$, and the set on the r.h.s. of
\eqref{eq:Ac2=mix, spikes} is closed in $\{x>0\}$, it is sufficient to prove it for the finite products
\eqref{eq:(x,y)=fin prod GA}.

Thus let $(x,y)$  be given by a finite product \eqref{eq:(x,y)=fin prod GA};
by the invariance of $\Ac_{2}$ w.r.t. $x\mapsto -x$ we may assume that
all $t_{i}$, $i\le K$ satisfy
$t_{i}\in [0,\pi/2]$. If there exists either an odd $A_{i}$ such that
$t_{i}\in [\frac{\pi}{2A_{i}}, \frac{\pi}{2}-\frac{\pi}{2A_{i}}]$, or an even $A_{i}$ such that
$t_{i}\in [\frac{\pi}{2A_{i}}, \frac{\pi}{2}]$, then one of the sufficient conditions of Proposition
\ref{prop:x<1/3 below mix cur} is satisfied, implying that $y\le (2x-1)^{2}$, so that our present statement
holds.

We may then assume that for all odd $A_{i}$ we have either $t_{i}\in [0,\frac{\pi}{2A_{i}})$ or
$t_{i}\in \left(\frac{\pi}{2}-\frac{\pi}{2A_{i}},\frac{\pi}{2}\right]$, and for all even $A_{i}$ we have
$t_{i}\in  \left[0,\frac{\pi}{2A_{i}}\right)$. Up to reordering the
indexes, we may assume that $K=K_{1}+K_{2}$ with $K_{1} > 0$, and where
all the $A_{i}$ with $i\le K_{1}$ are odd and $t_{i}\in \left[\frac{\pi}{2}-\frac{\pi}{2A_{i}},\frac{\pi}{2}\right]$,
and for all $K_{1}+1\le i\le K_{2}$ we have $t_{i}\in [0,\frac{\pi}{2A_{i}}]$, whether the corresponding $A_{i}$ is odd or even.
Let
\begin{equation}
\label{eq:A prod odd Ai}
A=\prod\limits_{i=1}^{K_{1}}A_{i}=2k+1.
\end{equation}
be the product of the first $K_{1}$ odd $A_{i}$. We claim that,
with $k$ as defined in \eqref{eq:A prod odd Ai}, necessarily
\begin{equation}
\label{eq:f1(x)<=y<=f2(x)}
f_{1;k}(x) \le y\le f_{2;k}(x).
\end{equation}

Define $$(x_{0},y_{0})= \prod\limits_{i=1}^{K_{1}}(G_{A_{i}}(t_{i}),G_{A_{i}}(2t_{i}))$$
and $$(x_{1},y_{1})= \prod\limits_{i=K_{1}+1}^{K_{1}+K_{2}}(G_{A_{i}}(t_{i}),G_{A_{i}}(2t_{i})),$$
so that
\begin{equation}
\label{eq:(x,y)=(x0,y0)*(x1,y1)}
(x,y)=(x_{0},y_{0})\cdot (x_{1},y_{1}).
\end{equation}
By \eqref{eq:fin prod y<=f2(x)},
we have $y_{0}\le g_{\{A_{i}\}_{i\le K_{1}}}(x_{0})$, and by Proposition \ref{prop:f2 finite} there exists $i_{0} \le K_{1}$
and $t_{0}\in \left(\frac{\pi}{2}-\frac{\pi}{2A_{i_{0}}},\frac{\pi}{2}\right]$, so that
\begin{equation}
\label{eq:x0=Ai0/A GAi0(0)}
x_{0}=\frac{A_{i_{0}}}{A}|G_{A_{i_{0}}}(t_{0})|
\end{equation}
and $g_{\{A_{i}\}_{i\le K_{1}}}(x_{0}) = G_{A_{i_{0}}}(2t_{0})$;
we then have
\begin{equation}
\label{eq:y0<=GAi0(2t0)}
y_{0} \le G_{A_{i_{0}}}(2t_{0}).
\end{equation}

For the sake of brevity of notation we assume with no loss of
generality that $i_{0}=1$,
and consider the curve
$\eta_{A_{1}}$ in $\R_{> 0}^{2}$ as in Lemma \ref{lem:GA analyt par};
by the virtue of the latter lemma we may re-parameterize
$\eta_{A_{1}}$ as $(z,h_{A_{1}}(z))$ in the range $z\in (-\infty,0]$,
and $0< h_{A_{1}}'(x) \le \frac{4}{3}$ everywhere.
Hence, on noting that all the logarithms involved are {\em
  negative}, the mean value theorem gives that
\begin{equation}
\label{eq:hA1(x0x1)>=hA1(x0)+4/3x1}
h_{A_{1}}(\log (Ax_{0}x_{1})) =h_{A_{1}}(\log(Ax_{0})+\log(x_{1})) \ge  h_{A_{1}}(\log(Ax_{0}))+\frac{4}{3}\log(x_{1}).
\end{equation}
Note that by \eqref{eq:x0=Ai0/A GAi0(0)} and the definition of $h_{A_{1}}$ as a re-parametrization of \eqref{eq:etaA def}, we have
$$h_{A_{1}}(\log(Ax_{0})) = h_{A_{1}}(\log(A_{1}|G_{A_{1}}(t_{0})|) =  \log G_{A_{1}}(2t_{0})$$
(recall that we assumed that $i_{0}=1$).

Substituting the latter into \eqref{eq:hA1(x0x1)>=hA1(x0)+4/3x1} it
implies that
there exist a number $\theta_{1}\in
\left(\frac{\pi}{2}-\frac{\pi}{2A_{1}},\frac{\pi}{2}\right]$
satisfying $A_{1}G_{A_{1}}(\theta_{1})=Ax_{0}x_{1}$ (note that $x_{0}
\in [0,1/A]$) and
$$\log(G_{A_{1}}(2\theta_{1})) \ge \log G_{A_{1}}(2t_{0})+\frac{4}{3}\log(x_{1}).$$
Equivalently,
\begin{equation}
\label{eq:GA1(theta1)=A/A1x0x1}
G_{A_{1}}(\theta_{1})=\frac{A}{A_{1}} x_{0}x_{1}
\end{equation}
and
\begin{equation}
\label{eq:GA1>=y0x1^4/3}
G_{A_{1}}(2\theta_{1}) \ge G_{A_{1}}(2t_{0})\cdot x_{1}^{4/3} \ge y_{0}\cdot x_{1}^{4/3},
\end{equation}
by \eqref{eq:y0<=GAi0(2t0)}.

Note that for the choice $t_{1}=\theta_{1}$ and $t_{i}=\frac{\pi}{2}$ for $2 \le i \le  K_{1}$,
we have
\begin{equation}
\label{eq:prodGAi(ti)=x0x1}
\left| \prod\limits_{i=1}^{K_{1}}G_{A_{i}}(t_{i}) \right| = \frac{A}{A_{1}} x_{0}x_{1}\cdot \prod\limits_{i=2}^{K_{1}}\frac{1}{A_{i}} = x_{0}x_{1},
\end{equation}
by \eqref{eq:GA1(theta1)=A/A1x0x1} and \eqref{eq:A prod odd Ai}. Now, bearing in mind \eqref{eq:(x,y)=(x0,y0)*(x1,y1)},
as $g_{\{A_{i}\}_{i\le K_{1}}}(x)$ is defined to be the supremum of
all the expressions \eqref{eq:gAi(x) def} with $\{t_{i}\}_{i\le K_{1}}$ satisfying \eqref{eq:prodGAi(ti)=x0x1},
and recalling Definition \ref{def:f2 def} of $f_{2;k}(x)$,
\eqref{eq:GA1>=y0x1^4/3} implies that
\begin{equation}
\label{eq:f2(x)<=f2(x0)x1^4/3}
f_{2;k}(x) \ge g_{\{A_{i}\}_{i\le K_{1}}}(x)\ge y_{0}\cdot x_{1}^{4/3}.
\end{equation}
On the other hand, we use the upper bound
\begin{equation}
\label{eq:y1<=x1^4}
y_{1}\le x_{1}^{4}
\end{equation}
of Lemma \ref{prop:prime curves x>1/3 below x^4}
(valid for $(x_{1},y_{1})$). The inequality \eqref{eq:y1<=x1^4}
together with \eqref{eq:f2(x)<=f2(x0)x1^4/3} and the fact that
$x^{4/3} > x^{4}$ for $x < 1$ yield that
\begin{equation*}
f_{2;k}(x) \ge y_{0}\cdot x_{1}^{4/3} \ge y_{0}\cdot x_{1}^{4} \ge y_{0}\cdot y_{1} = y,
\end{equation*}
as in \eqref{eq:(x,y)=(x0,y0)*(x1,y1)},
which is the second inequality of \eqref{eq:f1(x)<=y<=f2(x)}.

To prove the first inequality of \eqref{eq:f1(x)<=y<=f2(x)} we use Corollary \ref{cor:y^4/3>=1/Ax}
to yield $y_{0} \ge (Ax_{0})^{4/3}$ with $A$ as in \eqref{eq:A prod odd Ai}.
These combined imply
\begin{equation*}
y = y_{0}\cdot y_{1} \ge (Ax_{0})^{4/3}\cdot (2x_{1}^{2}-1) \ge
(Ax_{0})^{4}\cdot (2x_{1}^{2}-1) \ge (2(Ax_{0})^{2}-1) \cdot (2x_{1}^{2}-1)
\end{equation*}
where we used the obvious inequality $x^{4}\ge 2x^{2}-1$, valid on $[-1,1]$.
Finally, an application of the inequality \eqref{eq:convexity
  log(2exp(2z)-1)} of Lemma
\ref{lem:convexity log(2exp(2z)-1)} yields
\begin{equation*}
y\ge 2(Ax_{0}x_{1})^{2}-1 = 2A^{2}\cdot x^{2}-1 = f_{1;k}(x),
\end{equation*}
by the definition \eqref{eq:f1 def} of $f_{1;k}$, and recalling that $x_{k}=\frac{1}{2k+1}$.

Conversely, we need to prove that any point $(x,y)$ satisfying $f_{1;k}(x)\le y\le f_{2;k}(x)$ necessarily lies
in $\Ac_{2}$. To this end fix a number $k\ge 1$ and consider all the points $(x,y)$ of the form
\begin{equation}
\label{eq:(x,y)=f2*(2x^2-1)}
(x,y) = (s,f_{2;k}(s)) \cdot (t,2t^{2}-1)
\end{equation}
with $s\in (0,\frac{1}{2k+1}]$, $t\in (0,1]$ (recalling the notation \eqref{eq:compwise prod} for componentwise multiplication).
Note that by the multiplicativity of $\Ac_{2}$ (Proposition \ref{prop:A monoid}) all the points
of the form \eqref{eq:(x,y)=f2*(2x^2-1)} are attainable $(x,y)\in \Ac_{2}$. Since
$f_{2;k}\left(\frac{1}{2k+1}\right)=1$, for $s=\frac{1}{2k+1}$ fixed, $t$ varying in $(0,1]$,
$(x,y)$ attains all the curve $(x,y)=(x,f_{1;k}(x))$; for $t=1$ fixed, $s$ varying in
$(0,\frac{1}{2k+1})$, $(x,y)$ attains the curve $(x,y)=(x,f_{2;k}(x)$.

We claim that for every $(x,y)$ with $f_{1;k}(x)\le y \le f_{2;k}(x)$ there exists $s,t$ in the range as above,
satisfying \eqref{eq:(x,y)=f2*(2x^2-1)}.
To show the latter statement, given such a point $(x,y)$ consider $s\in [x,\frac{1}{2k+1}]$ and $t=\frac{x}{s}$.
We are then to solve the equation $$y=f_{2;k}(s)\cdot \left(\frac{2x^{2}}{s^{2}}-1 \right)  $$ for the given
$y$, $s\in [\frac{1}{2k+1},1]$; as the r.h.s. of the latter equation attains the values $f_{1;k}(x)$
and $f_{2;k}(x)$ for $s=\frac{1}{2k+1}$ and $s=1$ respectively, we are guaranteed a solution by
the intermediate value theorem.
Geometrically, the above argument shows that as $s$ varies, the family
of parabolas
$$t\mapsto (s,f_{2;k}(s)) \cdot (t,2t^{2}-1)$$ tesselate the domain $\Dc_{0,x_{k}}(f_{1;k},f_{2;k})$
(cf. the proof of Proposition \ref{prop:x>1/3 under max attainable}
in section \ref{sec:x>1/3 under max attainable}).

\end{proof}

\subsection{Proof of Proposition \ref{prop:f2 finite} by convexity}

The convexity of the component-wise logarithm of a curve implies that finite products of points lying on that curve would
stay below it. We aim at eventually proving that all the curves $\gamma_{A}=(G_{A}(t),G_{A}(2t))$, $A\ge 3$ odd,
$t\in \left[\frac{\pi}{2}-\frac{1}{2A},\frac{\pi}{2}\right]$, satisfy the above property
(see Lemma \ref{lem:GA curv convex} below).
We exploit their convexity in Lemma \ref{lem:max conv inside},
which, after taking logarithm, is equivalent to the statement of Proposition
\ref{prop:f2 finite} (see the proof of Proposition \ref{prop:f2
  finite} below); the latter
follow from  finite products of points on a curve, with the property above,
staying below that curve.

\begin{lem}
\label{lem:GA curv convex}
Let $\eta_{A}$ be the curve
\begin{equation*}
\eta_{A}(t) = (\log(A\cdot |G_{A}(t)|),\log(G_{A}(2t))),
\end{equation*}
$t\in \left(\frac{\pi}{2}-\frac{\pi}{2A},\frac{\pi}{2}\right]$ with $A\ge 3$ odd.
Then in the above domain of $t$ both components of $\eta_{A}=(\eta_{A;1},\eta_{A;2})$ are strictly decreasing,
and moreover $\eta_{A}$ may be re-parametrized as $(z,h_{A}(z))$ with
$h_{A}:(-\infty,0]\rightarrow \R$
convex analytic, increasing, and $h(0)=0$.
\end{lem}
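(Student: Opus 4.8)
The plan is to work directly with the explicit formula $G_A(t) = \frac{\sin(At)}{A\sin t}$ and analyze the curve $\eta_A$ through its derivatives. First I would substitute $t = \pi/2 - s$ with $s \in [0, \pi/(2A))$, so that the corner sits at $s = 0$; since $A$ is odd, $\sin(At) = \sin(A\pi/2 - As) = \pm\cos(As)$ and $\sin(2At) = \sin(A\pi - 2As) = \pm\sin(2As)$, so that (up to fixed signs) $A|G_A(t)| = \frac{\cos(As)}{A\sin(\pi/2-s)} = \frac{\cos(As)}{A\cos s}$ and $A|G_A(2t)| = \frac{\sin(2As)}{A\sin(2s)}$; note $G_A(2t)$ itself should be handled so that $\eta_{A;2} = \log G_A(2t)$ is well-defined and negative, which holds precisely on the stated range since $2t = \pi - 2s$ with $2s \in [0,\pi/A)$ keeps $\sin(2At)/(A\sin 2t) \in (0,1]$.

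Next I would verify the two monotonicity claims. For $\eta_{A;1}(t) = \log(\cos(As)) - \log(\cos s) - \log A$ (as a function of $s$, increasing in $s$ means decreasing in $t$), the derivative in $s$ is $-A\tan(As) + \tan s$; since $A \geq 3$ and $As < \pi/2$, one has $\tan(As) \geq A\tan s$ is \emph{not} quite immediate, but $A\tan(As) > \tan s$ follows from $\tan$ being convex and increasing on $[0,\pi/2)$ together with $As > s$ — more carefully, $\frac{\tan(As)}{\tan s} \geq \frac{As}{s} = A$ because $u \mapsto \tan(u)/u$ is increasing on $(0,\pi/2)$ (the reciprocal of Lemma \ref{lem:sinc-decreasing} applied to $\tan$, or directly $(\tan u / u)' > 0$). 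Hence $\eta_{A;1}$ is strictly decreasing in $t$. For $\eta_{A;2}(t) = \log\sin(2As) - \log\sin(2s) - \log A$, the derivative in $s$ is $2A\cot(2As) - 2\cot(2s)$, and since $\cot$ is decreasing and $2As < 2s < \pi/2$... here one needs $A\cot(2As) > \cot(2s)$, equivalently $\frac{\cot(2As)}{\cot(2s)} > \frac{1}{A}$; this again reduces to monotonicity of an elementary function ($u\cot u$ is decreasing, so $2As\cot(2As) > 2s\cot(2s)$, giving $A\cot(2As) = \frac{A}{2s}\cdot 2s\cot(2As) > \frac{A}{2s}\cdot\frac{2s}{2As}\cdot 2As\cot(2As)$... ), so I would simply invoke that $\frac{\sin(2As)}{A\sin(2s)}$ is decreasing in $s$ by Lemma \ref{lem:x-bigger-one-third}-type sinc estimates. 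Both components strictly decreasing means $\eta_{A;1}$ is invertible, so the reparametrization $(z, h_A(z))$ with $z = \eta_{A;1}$ ranging over $(-\infty, 0]$ is legitimate, and $h_A(0) = 0$ since at $s=0$ both components vanish.

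The real content is convexity of $h_A$, equivalently $h_A'' \geq 0$, equivalently (since $h_A' = \dot\eta_{A;2}/\dot\eta_{A;1}$ where dot is $d/ds$) that $\dot\eta_{A;2}\ddot\eta_{A;1} - \ddot\eta_{A;2}\dot\eta_{A;1} \geq 0$ given $\dot\eta_{A;1} > 0$; together with the bound $0 < h_A' \leq 4/3$ this is exactly the assertion of Lemma \ref{lem:GA analyt par}, so I would prove that lemma first and cite it here. For the $4/3$ bound: as $s \to 0^+$, $\dot\eta_{A;1} \sim (1 - A^2)s \cdot(-1)$... expanding, $\eta_{A;1} \sim -\frac{(A^2-1)s^2}{2}$ and $\eta_{A;2} \sim -\frac{(4A^2-4)s^2/6}{1}$, wait — $\log\frac{\sin(2As)}{A\sin 2s} \sim -\frac{(2A)^2 - (2)^2}{6}s^2\cdot\frac{1}{?}$; carefully, $\log\frac{\sin u}{u} \sim -u^2/6$, so $\eta_{A;2} \sim -\frac{(2As)^2 - (2s)^2}{6} = -\frac{4s^2(A^2-1)}{6} = -\frac{2(A^2-1)s^2}{3}$, while $\eta_{A;1} = \log\cos(As) - \log\cos s \sim -\frac{A^2 s^2}{2} + \frac{s^2}{2} = -\frac{(A^2-1)s^2}{2}$, giving $h_A'(0) = \frac{2/3}{1/2} = \frac{4}{3}$. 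So the slope at the corner is exactly $4/3$, and convexity forces $h_A' \leq 4/3$ throughout $(-\infty,0]$ automatically once convexity is established; the positivity $h_A' > 0$ is immediate from both components being strictly decreasing.

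The main obstacle I expect is the convexity computation: showing $\dot\eta_{A;2}\ddot\eta_{A;1} - \ddot\eta_{A;2}\dot\eta_{A;1} \geq 0$ for all odd $A \geq 3$ and all $s \in (0, \pi/(2A))$ is a genuine two-parameter trigonometric inequality. I would attack it by writing everything in terms of $\phi = As \in (0,\pi/2)$ and the ``slow'' variable $s = \phi/A$, expanding the cross-product as a sum of terms each of the form (elementary function of $\phi$)$\times$(elementary function of $s$), and showing the dominant $\phi$-dependent factor has a definite sign while the $s$-dependent correction is controlled because $s \leq \phi/3$. An alternative, cleaner route — which I would try first — is to show that each of the two ``building block'' curves $s \mapsto (\log\cos\phi - \log\cos s)$ against $s\mapsto \log\frac{\sin 2\phi}{\sin 2s}$, after eliminating $s$, is convex by checking that the Wronskian-type determinant factors through $\tan\phi - \phi > 0$ and $\tan s < \tan\phi$, analogous to the trick used in Lemma \ref{lem:key} where the sign of $h'$ was reduced to $\tan t > t$. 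If that reduction works, the lemma follows cleanly; if not, the brute-force series estimate with the $s \leq \phi/3$ slack is the fallback. Analyticity of $h_A$ is automatic since $\eta_{A;1}$ is analytic with nonvanishing derivative on the open interval, so the inverse function is analytic.
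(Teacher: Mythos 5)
Your preliminaries (the change of variables $t\mapsto\pi/2-s$, the trigonometric simplifications, the monotonicity of both components via $\tan u/u$-type inequalities, the Taylor expansion giving $h_A'(0)=4/3$) match the paper's proof closely and are correct. However, the core of the lemma — the convexity of $h_A$ — is not actually proved in your proposal; it is only planned. You offer two lines of attack: a hoped-for ``cleaner route'' (a Wronskian-type determinant that factors through $\tan\phi-\phi>0$ in the spirit of Lemma \ref{lem:key}), and a ``fallback'' brute-force series estimate exploiting the slack $s\le\phi/3$. The cleaner route is not developed at all and there is no evidence it succeeds; the fallback is roughly what the paper actually does, but doing it is nontrivial. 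Concretely, the paper reduces the claim to showing
$$G''(s)F'(s)-G'(s)F''(s)<0$$
on $(0,\pi/2)$ with $F(s)=f(s)-f(\alpha s)$, $G(s)=g(s)-g(\alpha s)-\log\alpha$, $f=-\log\cos$, $g=-\log\sin$, $\alpha=1/A\le1/3$, and then proves this via a delicate term-by-term comparison of the power-series coefficients (using positivity of the Taylor coefficients $a_i,b_j$ of $f$ and $g+\log$, the bound $\alpha\le 1/3$, and a separate technical Lemma \ref{lem:h(s) tedious <=0} that controls a trigonometric polynomial $q(s)$ by explicit coefficient estimates and pairing). Your proposal stops exactly where that work begins.

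There is also a logical slip worth flagging: you say that the convexity cross-derivative inequality ``together with the bound $0<h_A'\le 4/3$ is exactly the assertion of Lemma \ref{lem:GA analyt par}, so I would prove that lemma first and cite it here.'' But Lemma \ref{lem:GA analyt par} does not assert convexity; it only asserts the slope bound $0<h'\le 4/3$, and in the paper its proof \emph{depends on} Lemma \ref{lem:GA curv convex} (convexity $\Rightarrow$ slope bounded by the slope at $0$). Citing it here would be circular. You do recover the correct logic a sentence later when you note that convexity plus $h_A'(0)=4/3$ automatically gives the bound, but you should make sure the dependence runs from the present lemma to Lemma \ref{lem:GA analyt par}, not the other way around.
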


The somewhat technical proof of Lemma \ref{lem:GA curv convex} is
postponed to
Appendix \ref{apx:GA curv convex}.

\begin{lem}
\label{lem:max conv inside}
Let $\{h_{i}:(-\infty,0]\rightarrow\R\}_{i\le K}$ be a finite
collection of continuous convex functions
such that for all $i\le K$ we have $h_{i}(0)=0$.
Define $h:(-\infty,0]\rightarrow\R$ by
\begin{equation}
\label{eq:h max def}
h(z)=\sup\limits_{z_{i}\le 0:\: \sum\limits_{i=1}^{K}z_{i}=z}
\left\{\sum\limits_{i=1}^{K}h_{i}(z_{i}) \right\}.
\end{equation}
Then for every $z\in (-\infty,0]$ there exists an index
$i_{0}=i_{0}(z)$ so that
$
h(z)=h_{i_{0}}(z).
$
\end{lem}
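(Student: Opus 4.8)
The plan is to reduce the $K$-variable supremum to the two-variable case by induction on $K$, and to handle $K=2$ directly using convexity and the normalization $h_i(0)=0$. The base case $K=1$ is trivial: the only admissible $z_1$ is $z_1=z$, so $h(z)=h_1(z)$. For the inductive step, note that
\begin{equation*}
h(z) = \sup_{w\le 0}\left\{ h_K(z-w) + \tilde h(w) \right\},
\qquad \tilde h(w) := \sup_{z_i\le 0,\ \sum_{i<K}z_i = w}\ \sum_{i<K} h_i(z_i),
\end{equation*}
so it suffices to treat $K=2$ with the two functions $h_K$ and $\tilde h$; here one must first observe that $\tilde h$ is again continuous, convex, satisfies $\tilde h(0)=0$, and by the inductive hypothesis equals one of the $h_i$ at each point (in particular is real-valued on $(-\infty,0]$), so the inductive machinery applies. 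Thus the whole lemma comes down to the $K=2$ statement: if $g_1,g_2:(-\infty,0]\to\R$ are continuous and convex with $g_1(0)=g_2(0)=0$, then for each $z\le 0$ one has $\sup_{z_1+z_2=z,\,z_i\le 0}\{g_1(z_1)+g_2(z_2)\} = \max(g_1(z),g_2(z))$.

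For the $K=2$ claim, fix $z\le 0$ and set $\phi(s) := g_1(z-s) + g_2(s)$ for $s\in[z,0]$; I want to show $\phi$ attains its maximum on $[z,0]$ at one of the endpoints, since $\phi(0)=g_1(z)$ and $\phi(z)=g_2(z)$. The function $\phi$ is a sum of two convex functions — $s\mapsto g_1(z-s)$ is convex because $g_1$ is and $s\mapsto z-s$ is affine — hence $\phi$ is convex on $[z,0]$. A convex function on a closed interval attains its maximum at an endpoint of the interval, so $\phi(s)\le \max(\phi(z),\phi(0)) = \max(g_1(z),g_2(z))$ for all $s\in[z,0]$, giving the upper bound for the supremum. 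The lower bound is immediate by taking $(z_1,z_2)=(z,0)$ or $(0,z)$ and using $g_i(0)=0$; thus the supremum equals $\max(g_1(z),g_2(z))$, which is one of the two values, proving the $K=2$ case and completing the induction.

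The only real subtlety — and the step I would be most careful about — is the bookkeeping in the inductive step: one must confirm that the partial supremum $\tilde h$ inherits all the hypotheses (continuity, convexity, $\tilde h(0)=0$, and finiteness on $(-\infty,0]$) so that the induction is legitimate. Convexity of $\tilde h$ follows since it is, by the inductive conclusion, the pointwise maximum of finitely many convex functions restricted to the regions where each is the active index, but it is cleaner to argue convexity of $\tilde h$ directly from its definition as an infimal-convolution-type supremum over the simplex $\{\sum z_i = w\}$: if $w = \lambda w' + (1-\lambda) w''$ and $(z_i')$, $(z_i'')$ are near-optimal for $w'$, $w''$, then $(\lambda z_i' + (1-\lambda)z_i'')$ is admissible for $w$ and convexity of each $h_i$ gives $\tilde h(w) \ge \lambda \tilde h(w') + (1-\lambda)\tilde h(w'') - \varepsilon$. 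Continuity and finiteness then follow on the open half-line from convexity plus the values at $0$ and at interior points supplied by the inductive hypothesis (each $h_i$ is finite-valued by assumption), and $\tilde h(0)=0$ is forced by $z_i\le 0$ and $\sum z_i = 0$. With this in hand the induction runs without further difficulty.
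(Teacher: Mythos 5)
Your overall strategy is correct and in spirit the same as the paper's: both proofs boil down to the fact that a convex function on a compact convex set attains its maximum at an extreme point. The paper applies this once to the separable convex function $H(t_1,\dots,t_K)=\sum_i h_i(t_i)$ on the simplex $\Omega(t)=\{(t_i): \sum t_i = t,\ t_i\le 0\}$ and iterates on faces; you organize the same iteration as an explicit induction on $K$ via the partial supremum $\tilde h$, reducing to the one-dimensional endpoint argument. Either organization is fine, and your $K=2$ step ($\phi(s)=g_1(z-s)+g_2(s)$ convex on $[z,0]$, hence maximized at an endpoint) is clean and correct.

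However, your ``cleaner'' direct argument for the convexity of $\tilde h$ does not work: for near-optimal $(z_i')$ for $w'$ and $(z_i'')$ for $w''$, convexity of the $h_i$ gives
\[
\sum_i h_i\bigl(\lambda z_i' + (1-\lambda) z_i''\bigr)
\ \le\
\lambda \sum_i h_i(z_i') + (1-\lambda) \sum_i h_i(z_i''),
\]
so from $\tilde h(\lambda w' + (1-\lambda)w'') \ge \sum_i h_i(\lambda z_i'+(1-\lambda)z_i'')$ you obtain a lower bound by a quantity that is itself bounded \emph{above} by $\lambda\tilde h(w')+(1-\lambda)\tilde h(w'')$, which yields neither convexity nor concavity. (Supremal convolution of convex functions does not automatically inherit convexity.) The argument you mention first is the one that actually works, but it needs to be stated globally rather than ``restricted to the regions where each is the active index'': by the inductive hypothesis $\tilde h(w)=h_{i_0(w)}(w)$ for some $i_0(w)<K$, and by admissibility of the corner $(0,\dots,0,w,0,\dots,0)$ one also has $\tilde h(w)\ge h_i(w)$ for every $i<K$; together these give $\tilde h = \max_{i<K} h_i$ pointwise on all of $(-\infty,0]$, which is continuous, convex, finite-valued, and vanishes at $0$ as a pointwise maximum of finitely many such functions. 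With that repair the induction closes correctly.
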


Before giving a proof for Lemma \ref{lem:max conv inside} we may finally give a proof
for Proposition \ref{prop:f2 finite}.

\begin{proof}[Proof of Proposition \ref{prop:f2 finite} assuming
  lemmas \ref{lem:GA curv convex} and \ref{lem:max conv inside}]

Let $A=2k+1\ge 3$ be odd, and \eqref{eq:A prod odd Ai} be an arbitrary factorization of $A$ into
integers $A_{i}\ge 3$. Consider the curves
$\{\eta_{A_{i}}(t):\: t\in [\frac{\pi}{2}-\frac{\pi}{2A_{i}},\frac{\pi}{2} ]  \}_{i\le K}$ as defined
in \eqref{eq:etaA def}. By Lemma \ref{lem:GA curv convex} all of the $\eta_{A_{i}}$
can be re-parametrized as $(z_{i},h_{A_{i}}(z_{i}))$
on $(-\infty,0]$, with $h_{i}$ convex analytic and $h(0)=0$.

Hence, by Lemma \ref{lem:max conv inside} for every $x\in (0,\frac{1}{A}]$ there exists $i_{0}=i_{0}(x)$,
so that
\begin{equation*}
h(z):=\sup\limits_{z_{i}\le 0:\: \sum\limits_{i=1}^{K}z_{i}=z} \left\{\sum\limits_{i=1}^{K}h_{A_{i}}(z_{i}) \right\}
= h_{i_{0}}(z),
\end{equation*}
Note that, after taking logarithms, maximizing $\prod\limits_{i=1}^{K}G_{A}(2t_{i})$ under the constraint
$(t_{i})_{i\le K} \in \mathcal{X}_{\{A_{i}\}}(x)$ with $\mathcal{X}_{\{A_{i}\}}(x)$
as in \eqref{eq:Xc def}, $0<x\le \frac{1}{A}$ is equivalent to maximizing
$$\sum\limits_{i=1}^{K}\log G_{A}(2t_{i}) = \sum\limits_{i=1}^{K}h_{A_{i}}(z_{i})$$
under the constraint $\sum\limits_{i=1}^{K}z_{i}=z$, where $z=\log{Ax} \in (-\infty, 0]$.
More formally, recalling the definition \eqref{eq:etaA def} of $\eta_{A_{i}}$
and $(z_{i},h_{A_{i}}(z_{i}))$ being a re-parametrization of $\eta_{A_{i}}$,
the function $h(z)$ defined as in \eqref{eq:h max def}, on noting that
$z = \log Ax$,
satisfies
\begin{equation}
\label{eq:h(Ax=log g(x)}
h(Ax) = \sup\limits_{(t_{i})_{i\le K}\in \mathcal{Y}_{\{A_{i}\}}(x)} \left\{\prod\limits_{i=1}^{K}G_{A_{i}}(2t_{i}) \right\},
\end{equation}
where
\begin{equation*}
\mathcal{Y}_{\{A_{i}\}}(x) = \left\{(t_{i})_{i\le K}:\: \forall i. t_{i}\in \left[\frac{\pi}{2}-\frac{\pi}{2A_{i}},\frac{\pi}{2}\right],
\,\sum\limits_{i=1}^{K}\log(A_{i}|G_{A_{i}}(t_{i})|) =\log(Ax)\right\}.
\end{equation*}
Since $\sum\limits_{i=1}^{K}\log(A_{i}G_{A_{i}}(t_{i})) =\log(Ax)$ is equivalent to $\sum\limits_{i=1}^{K}\log(G_{A_{i}}(t_{i})) =\log(x)$ via
\eqref{eq:A prod odd Ai}, we have $\mathcal{Y}_{\{A_{i}\}}(x) = \mathcal{X}_{\{A_{i}\}}(x)$ (as in \eqref{eq:Xc def}),
and hence \eqref{eq:h(Ax=log g(x)} is
\begin{equation*}
h(Ax)=\log g_{\{A_{i}\}}(x).
\end{equation*}

The latter equality together with Lemma \ref{lem:max conv inside} then imply that
we have
\begin{equation*}
h_{i_{0}}(Ax)=\log g_{\{A_{i}\}_{i\le K}}(x)
\end{equation*}
for some $i_{0}\le K$; since $h_{i_{0}}$ is a re-parametrization of $\eta_{A_{i_{0}}}$, this is equivalent to
\begin{equation*}
(\log(A_{i_{0}}G_{A_{i_{0}}}(t_{i_{0}})),\log(G_{A_{i_{0}}}(2t_{i_{0}}))) = (\log(Ax),\log g_{\{A_{i}\}_{i\le K}}(x))
\end{equation*}
for some $t_{i_{0}}\in [\frac{\pi}{2}-\frac{\pi}{2A_{i_{0}}},\frac{\pi}{2}]$, i.e.
\begin{equation*}
\left(\frac{A_{i_{0}}}{A}G_{A_{i_{0}}}(t_{i_{0}}),G_{A_{i_{0}}}(2t_{i_{0}})\right) = (x, g_{\{A_{i}\}_{i\le K}}(x)),
\end{equation*}
which is the first statement of the present proposition, at least for $x>0$. For $x=0$ it is sufficient to
notice that for all $i\le K$, $$(G_{A_{i}}(t), G_{A_{i}}(2t))|_{t=\frac{\pi}{2}-\frac{\pi}{2A_{i}}} = (0,0),$$
so that in particular $g_{\{A_{i}\}_{i\le K}}(x) = 0$, whatever $\{A_{i}\}_{i\le K}$ are.

To see that the map $x\mapsto i_{0}(x)$ is in fact piecewise constant on $[0,\frac{1}{A}]$
(with finitely many pieces), we note that it is readily shown that on $(0, \frac{1}{A}]$,
$g_{\{ A_{i}\}_{i\le K}}$
is a maximum of finitely many analytic curves (namely,
$\left(\frac{A_{i}}{A}|G_{A_{i}}(t)|,G_{A_{i}}(2t)\right)$), and vanishes at $0$, which happens
to lie on all of them. Since such a collection of analytic curves may only intersect in
finitely many points for $x\in [0, \frac{1}{A}]$, it follows that $i_{0}(x)$ is uniquely
determined as the maximum of these outside of finitely many points (that include $(0,0)$),
and $i_{0}$ is constant between any two such consecutive points.

\end{proof}

\begin{proof}[Proof of Lemma \ref{lem:max conv inside}]
It is easy to check that with the assumptions of the present lemma,
the function $H:(-\infty,0]^{K}\rightarrow\R$ defined by
\begin{equation*}
H(t_{1},\ldots, t_{K}) = \sum\limits_{i=1}^{K}h_{i}(t_{i})
\end{equation*}
is a convex function. Now fix $t<0$ and consider the
set
$$\Omega(t):=\left\{(t_{i})_{i\le K}:\:
  \sum\limits_{i=1}^{K}t_{i}=t, \text{ $t_{i} \leq 0$ for $1 \le i \le
  K$} \right\}\subseteq (-\infty,0]^{K};$$
$\Omega(t)$ is a compact convex domain, and it is evident that
\begin{equation*}
h(t)=\max\limits_{(t_{i})\in\Omega(t)}H(t_{1},\ldots, t_{k}).
\end{equation*}

Now, a convex function cannot attain a maximum in the interior of a convex domain
(all the local extrema of a convex function are necessarily minima). Hence there exists
an index $i_{1}\le K$ so that $$h(t)=\sum\limits_{i=1}^{K}h_{i}(t_{i})$$
for some $(t_{i})\in\Omega(t)$ with $t_{i_{1}}=0$, i.e. one of the elements
of $(t_{i})$ must vanish. By induction,
we  find that all but one element of $(t_{i})$ vanish, say $t_{i}=0$ for
$i\ne i_{0}$, whence $t_{i_{0}}=t$, and $h(t)=h_{i_{0}}(t)$, as $h_{i}(0)=0$ for $i\ne i_{0}$
by the assumptions of the present lemma.

\end{proof}



\section{Proof of Theorem \ref{thm:sqrfree attainable class}: square-free attainable measures}

\label{sec:sqrfree attainable class}

\begin{proof}

Recall that we de-symmetrized all the probability measures by an analogue of
\eqref{eq:nun desymmetrized mun}.
First we show that \eqref{eq:square free nec suff} holds for any square-free attainable measure;
as the first inequality in \eqref{eq:square free nec suff} holds for every probability measure
\eqref{eq:P2 Riesz} it only remains to show that every point $(x,y) = (\hat{\mu}(1), \hat{\mu}(2))$
corresponding to a square-free attainable $\mu$ satisfies \eqref{eq:y<=M(x)}.

By the definition of square-free attainable measures,
if $\mu$ is square-free attainable then $(x,y)$ is lying in the closure of
the set of finite products
\begin{equation}
\label{eq:xy=prod}
\begin{split}
(\tilde{x},\tilde{y})&=\left\{\prod\limits_{i=1}^{K}(\cos(\theta_{i}),\cos(2\theta_{i})):\: \theta_{i}\in [0,\pi] \right\}
\\&=\left\{\prod\limits_{i=1}^{K}(x_{i},y_{i}):\: x_{i}\in [-1,1] \right\},
\end{split}
\end{equation}
where for all $i\le K$, $y_{i} = 2x_{i}^{2}-1$.
Now if $\tilde{y} > 0$ and $y_{i_{0}} < 0$ for some $i_{0}\le K$, then
$(\tilde{x},\tilde{y})\in \Ac_{2}^{-}$ is a mixed sign attainable point, and
(upon recalling Notation \ref{not:B1 B2 def})
Lemma \ref{lem:mix sign in B1 or B2} implies that $(\tilde{x},\tilde{y}) \in B_{1}$, i.e.,
$|\tilde{x}| \leq 1/2$ and $\tilde{y} \leq (2|\tilde{x}|-1)^{2}$.

If $\tilde{y} > 0$ and $y_{i} \geq 0$ for all $i$, then
$y_{i} =2x_{i}^{2}-1 \le x_{i}^{4}$ for all $i$ as it is easy to check
the latter inequality explicitly, consequently $\tilde{y} \leq \tilde{x}^{4}$.
Since \eqref{eq:y<=M(x)} holds on the collection of all products \eqref{eq:xy=prod},
it also holds on its closure, namely for square-free attainable measures.
This concludes the proof of the necessity of the inequality
\eqref{eq:square free nec suff}.


It then remains to show the sufficiency, i.e. any point $(x,y)$ satisfying \eqref{eq:square free nec suff}
corresponds to a square-free attainable measure. We claim that the
attainable measures constructed by Proposition \ref{prop:x>1/3 under max attainable} are in fact
square-free attainable. To this end it is crucial to notice that
the measures corresponding to points lying on the curves $$\{(x,x^{4}):x\in [0,1]\}$$
(constructed by Lemma \ref{lem:x^4 attainable}), and $$\{(x,(2x-1)^{2}): x\in [0,1] \}$$ (a product of the parabola
$y=x^{2}$ by itself) exploited in the course of the proof of Proposition \ref{prop:x>1/3 under max attainable}
are both square-free attainable. We recall in addition, that collection of square-free attainable
measures is closed under convolutions, so that the products of points
corresponding to square-free
attainable measures correspond to square-free attainable measures;
hence  the tessellation
argument used in the proof of Proposition \ref{prop:x>1/3 under max attainable} works in this case too.

\end{proof}

\appendix

\section{Proof of Proposition \ref{prop:x<1/3 below mix cur}: below the ``mixed signs" curve $y=(2x-1)^{2}$}

\label{apx:below mixed big small}

By the assumptions of Proposition \ref{prop:x<1/3 below mix cur} there
exists $i$ such that $t_{i} \in [\pi/(2A_{i}), \pi/2-\pi/(2A_{i})]$
(for $A_{i}$ odd), or $t_{i} \in [\pi/(2A_{i}), \pi/2]$ (for $A_{i}$ even.)
%
%
The following lemma exploits this property to yield
more information about (at least) one point in the product.

\begin{lem}
\label{lem:below mix cur single}


Let $A\ge 3$ and $(x,y)=(G_{A}(t),G_{A}(2t))$. If $A$ is odd
and $t \in [\frac{\pi}{2A},\frac{\pi}{2}-\frac{\pi}{2A}]$, or $A$ is even
and $t \in [\frac{\pi}{2A},\frac{\pi}{2}]$, then either $y\le 0$, or $y\le (2|x|-1)^{2}$ and $|x|<\frac{1}{3}$.

 If $A=2$ and $t \in \left[\frac{\pi}{4},\frac{\pi}{2}\right]$, then $y=G_{2}(2t) \le 0$.


\end{lem}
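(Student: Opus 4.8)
The plan is to dispose of the degenerate value $A=2$ directly, and for $A\ge 3$ to split the admissible interval of $t$ at the point $\pi/A$, relying throughout on the identity
\begin{equation*}
G_{A}(2t)=G_{A}(t)\cdot\frac{\cos(At)}{\cos t},
\end{equation*}
which is immediate from the double-angle formulas whenever $\cos t\neq 0$ (the one exceptional value $t=\pi/2$ occurs only for $A$ even, and there $G_{A}(\pi)=-1\le 0$ by a one-line limit computation, so $y\le 0$). For $A=2$ one simply notes $G_{2}(2t)=\cos 2t$, and $2t\in[\pi/2,\pi]$ on $[\pi/4,\pi/2]$, so $y\le 0$.

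Next I would treat the lower part of the range, $t\in[\pi/(2A),\pi/A]$ with $A\ge 3$: here $At\in[\pi/2,\pi]$, so $\cos(At)\le 0\le\sin(At)$, while $0<t<\pi/2$ gives $\sin t,\cos t>0$; hence $x=G_{A}(t)\ge 0$ and $y=x\cos(At)/\cos t\le 0$. This already settles $A=3$ completely, since there the admissible range is exactly $[\pi/6,\pi/3]=[\pi/(2A),\pi/A]$, so from here on $A\ge 4$.

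For $A\ge 4$ and $t>\pi/A$, Lemma~\ref{lem:x-bigger-one-third} gives $|x|=|G_{A}(t)|<1/3$, which is one of the two conclusions; if $y\le 0$ we are done, so assume $y>0$ and the task is to prove $y\le(2|x|-1)^{2}=(1-2|x|)^{2}$. I would first make two reductions. Applying Lemma~\ref{lem:x-bigger-one-third} again to $|G_{A}(2t)|=|G_{A}(\pi-2t)|$ (reflecting into $[0,\pi/2]$) and checking the sign of $y$ on the few boundary sub-ranges this produces shows that in fact $y<1/3$, and that $x=0$ forces $y=0$, so we may assume $x\neq 0$. Then, writing $At=\pi+\phi$, the conditions "$t$ in range and $y>0$" confine $\phi$ to one of the intervals $(0,\pi/2),(\pi,3\pi/2),\dots$, and on each successive such interval both $|x|$ and $y$ are smaller while $(1-2|x|)^{2}$ is larger, so it suffices to treat the first "bump" $\phi\in(0,\pi/2)$, i.e.\ $t\in(\pi/A,3\pi/(2A))$. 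On it $|x|=\frac{\sin\phi}{A\sin t}$ and $y=\frac{\sin 2\phi}{A\sin 2t}$ with $t=(\pi+\phi)/A$, and by Lemma~\ref{lem:sinc-decreasing} (monotonicity of $\sin u/u$ on $[0,\pi]$) one checks that, for fixed $\phi$, $y$ is decreasing and $(1-2|x|)^{2}$ increasing in $A$; hence the inequality need only be verified at $A=4$, where $\sin 2t=\cos(\phi/2)$ and it collapses to the one-variable statement
\begin{equation*}
\sin(\phi/2)\cos\phi\ \le\ \Bigl(1-\frac{\sin\phi}{2\sin\!\big((\pi+\phi)/4\big)}\Bigr)^{2},\qquad \phi\in(0,\pi/2),
\end{equation*}
to be settled by elementary calculus.

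The hard part is precisely this final inequality: it is numerically almost an equality near $\phi\approx 1$ (the curve $t\mapsto(G_{4}(t),G_{4}(2t))$ is essentially tangent to $y=(2x-1)^{2}$ there), so there is no slack to waste — the crude bounds $|y|\le 1/(A\sin 2t)$ and $|x|<1/3$, used separately, are hopelessly lossy, since they would only give $y\lesssim 1/4$ against a target that descends to just above $1/9$. What makes it true is the cancellation along the curve: $|\cos(At)|$, and hence $y$, is small exactly where $t$ is near $\pi/A$ and $|x|$ is closest to its maximal value, whereas once $t$ is bounded away from $\pi/A$, $|x|$ is small and $(1-2|x|)^{2}$ is near $1$; the one-variable reduction above is designed precisely to make this cancellation manifest.
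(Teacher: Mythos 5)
Your opening reductions (the case $A=2$, the sub-range $t\in[\pi/(2A),\pi/A]$ where $At\in[\pi/2,\pi]$ forces $y\le 0$, and the observation that $A=3$ is thereby completely covered, with $|x|<1/3$ from Lemma~\ref{lem:x-bigger-one-third} for the rest) match the paper's and are correct. After that the argument has two genuine gaps.

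First, the claim that ``on each successive such interval both $|x|$ and $y$ are smaller, so it suffices to treat the first bump'' is not established, and the assertion about $y$ is actually suspect: for a later bump $t$ is larger, $2t$ has typically crossed $\pi/2$, so $\sin 2t$ is \emph{smaller} and $y=\sin(2\phi)/(A\sin 2t)$ can be \emph{larger}, not smaller, at the same $\phi$. The reduction to $A=4$ by decreasing $A$ at fixed $\phi$ is sound (that part is a correct application of Lemma~\ref{lem:sinc-decreasing}), but it only reaches $A=4$ when $\phi\in(0,\pi/2)$; for later bumps the minimal admissible $A$ is larger, so you would still have a family of inequalities, not a single one. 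The paper avoids the bump-by-bump comparison entirely: it first notes that for $t\in[2\pi/A,\pi/2-\pi/A]$ one has $|G_A(t)|\le 1/4$ and (by $G_A(2t)=\pm G_A(\pi-2t)$) $|G_A(2t)|\le 1/4$, so $y\le 1/4=(2\cdot\tfrac14-1)^2\le(2|x|-1)^2$; it handles the remaining range $t\in[\pi/2-\pi/A,\pi/2-\pi/(2A)]$ by a separate case split on the parity of $A$ and $B=A/2$; and it is then left only with $t\in[\pi/A,3\pi/(2A)]$.

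Second, and more importantly, the final one-variable inequality at $A=4$ is left as ``to be settled by elementary calculus,'' and you explicitly observe that it is tight near $\phi\approx 1$ with no slack --- which is precisely why a direct calculus argument is delicate and cannot be waved through. The paper sidesteps this entirely: after reducing to $s=At/4\in[\pi/4,3\pi/8]$ (via the $\sin u/u$ comparison, which you also use), it factors
\begin{equation*}
(G_4(s),G_4(2s))=(G_2(s),G_2(2s))\cdot(G_2(2s),G_2(4s)),
\end{equation*}
notes that $G_2(2s)=\cos 2s\le 0$ in this range, so the point is a mixed-sign product, and invokes the already-established Lemma~\ref{lem:not tot pos under max curve} to get $y'\le(2|x'|-1)^2$ with no calculus at all. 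The monotonicity of $x\mapsto(2x-1)^2$ on $[0,1/2]$ then transfers the bound from $(x',y')$ to $(x,y)$. If you want to keep your structure, the cleanest fix is to replace both gaps by these two devices from the paper: the $|x|,|y|\le 1/4$ bound for $t\ge 2\pi/A$, and the mixed-sign factorization of $G_4$.
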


\begin{proof}[Proof of Proposition \ref{prop:x<1/3 below mix cur} assuming Lemma
\ref{lem:below mix cur single}]

Assume with no loss of generality that the postulated index
is $i=1$, i.e.
$(x_{1},y_{1})=(G_{A_{1}}(t_{1}),G_{A_{1}}(2t_{1}))$ with either
$A_{1} \ge 3$ being odd and $t \in [\frac{\pi}{2A_{1}},\frac{\pi}{2}-\frac{\pi}{2A_{1}}]$, or
$A_{1}\ge 2$ being even and $t \in
[\frac{\pi}{2A_{1}},\frac{\pi}{2}]$. Suppose first that
$y_{1}\le 0$. In this case the point $(x,y)$ is ``mixed sign attainable" (cf. Definition
\ref{def:tot pos not}), so that Lemma \ref{lem:not tot pos under max curve} implies that $y\le (2|x|-1)^{2}$.

Otherwise we assume that $y_{1}>0$ and $y>0$. Then Lemma \ref{lem:below mix cur single} implies that
$A\ge 3$, and $|x_{1}|<\frac{1}{3}$, whence
\begin{equation*}
0<y\le y_{1} \le (2|x_{1}|-1)^{2} \le (2|x|-1)^{2},
\end{equation*}
since $|x|\le |x_{1}|$ and the function $x\mapsto (2x-1)^{2}$ is decreasing on $\left[0,\frac{1}{2}\right]$.

\end{proof}

\begin{proof}[Proof of Lemma \ref{lem:below mix cur single}]

First, upon recalling that for $A=2$ we have $G_{2}(t)=\cos(t)$, the second statement of Lemma \ref{lem:below mix cur single} is obvious.
We are left with proving the first statement. For $A=3$ if $t \in \left[\frac{\pi}{6},\frac{\pi}{3}\right]$,
then $$y=\frac{\sin(6t)}{3\sin(2t)} \le 0$$ again.
We  may thus assume that $A\ge 4$.

Next, we would like to consolidate the even and the odd $A$ cases,
by showing  that if $A$ is
even and $t\in \left[\frac{\pi}{2}-\frac{\pi}{2A}, \frac{\pi}{2}\right]$,
then the statement of the present lemma holds.
To do this we note that in this range $2At\in [(A-1)\pi,A\pi]$,
so that
\begin{equation*}
G_{A}(2t)=\frac{\sin(2At)}{A\sin(2t)}\le 0
\end{equation*}
once more.

Hence we may assume that $t\in \left[\frac{\pi}{2A}, \frac{\pi}{2}-
\frac{\pi}{2A} \right]$, whether $A$ is even or odd. We would like to further cut out
the short intervals $\left[\frac{\pi}{2A}, \frac{\pi}{A}\right]$ and
$\left[\frac{\pi}{2}-\frac{\pi}{A}, \frac{\pi}{2}-\frac{\pi}{2A}\right]$, i.e.
establish the validity of the present lemma in these intervals.
If $t\in \left[\frac{\pi}{2A}, \frac{\pi}{A}\right]$
whether $A$ is even or odd, then $2At\in [\pi,2\pi]$, so that $y=G_{A}(2t) \le 0$ in this regime too.

If $t\in \left[\frac{\pi}{2}-\frac{\pi}{A}, \frac{\pi}{2}-\frac{\pi}{2A}\right]$, then
$2At\in [(A-2)\pi,(A-1)\pi]$, so that if $A$ is odd then
$y=G_{A}(2t)=\frac{\sin(2At)}{A\sin(2t)}\le 0$.
In the remaining case $A$ even, for the same range $t\in
\left[\frac{\pi}{2}-\frac{\pi}{A},
  \frac{\pi}{2}-\frac{\pi}{2A}\right]$, we write $A=2B$ for $B\in\Z$,
and note that
\begin{equation*}
\begin{split}
(x,y)&=(G_{A}(t),G_{A}(2t)) =
\left(\frac{\sin(Bt)\cos(Bt)}{B\sin(t)},\frac{\sin(2Bt)\cos(2Bt)}{B\sin(2t)}\right)
\\&= (G_{B}(t),G_{B}(2t))\cdot (G_{2}(t),G_{2}(2t)).
\end{split}
\end{equation*}
Hence if in  turn $B$ is even, then $G_{B}(2t) =
\frac{\sin(2Bt)}{B\sin(2t)}\le 0$, since
$2Bt\in [(B-1)\pi,(B-1)\pi+\frac{\pi}{2}].$ Hence $(x,y)$ is mixed sign attainable,
and therefore by Lemma \ref{lem:not tot pos under max curve}, $y\le (2|x|-1)^{2}$, and, in addition, $|x|\le \frac{1}{3}$ by
Lemma \ref{lem:x-bigger-one-third}.

Otherwise, if $B$ is odd, we may assume that $A\ge 6$ is even
(in the same range $t\in \left[\frac{\pi}{2}-\frac{\pi}{A}, \frac{\pi}{2}-\frac{\pi}{2A}\right]$);
in this case we claim that $|x|=|G_{A}(t)| \le \frac{1}{5}$ and $y= |G_{A}(2t)| \le \frac{1}{3}$.
As $\frac{1}{3}\le (2/5-1)^{2}$, and $x\mapsto (2x-1)^{2}$ is decreasing on $[0,\frac{1}{2}]$
this is sufficient to yield $y\le (2|x|-1)^{2}$. To show this, we
first note that $G_{A}(2t)=\pm G_{A}(2(\pi/2-t))$; hence
Lemma \ref{lem:x-bigger-one-third} implies that $y\le \frac{1}{3}$ indeed. Concerning the value of $|x|$, we have
for $t$ in the range as above (bearing in mind that $A\ge 6$):
\begin{equation*}
\begin{split}
|G_{A}(t)| &\le \frac{1}{A\sin(t)}\le \frac{1}{A\sin(\pi/2-\pi/A)} = \frac{1}{A\cos(\pi/A)} \\&\le
\frac{1}{6\cos(\pi/6)} = 0.19\ldots < \frac{1}{5},
\end{split}
\end{equation*}
since $A\mapsto A\cdot \cos(\pi/A)$ is strictly increasing for $A \ge 6$.

Finally, we take care of the case $A\ge 4$, whether $A$ is even or
odd, and the remaining range
\begin{equation}
\label{eq:t in [pi/A,pi/2-pi/A}
t\in \left[\frac{\pi}{A},\frac{\pi}{2}-\frac{\pi}{A}\right],
\end{equation}
and $(x,y)=(G_{A}(t),G_{A}(2t))$.
Noting that $\sin(t)\ge \frac{2}{\pi}t$ everywhere on
$[0,\frac{\pi}{2}]$, we find that
for $t\in [\frac{2\pi}{A},\frac{\pi}{2}]$,
\begin{equation*}
|G_{A}(t)|\le \frac{1}{A\sin(t)} \le \frac{\pi}{2}\frac{1}{A\cdot 2\pi/A} = \frac{1}{4}.
\end{equation*}
Hence (under the assumption \eqref{eq:t in [pi/A,pi/2-pi/A} on $t$), if $t>\frac{2\pi}{A}$,
$|x|=|G_{A}(t)| \le \frac{1}{4}$, and (using the natural symmetry $G_{A}(t)=\pm G_{A}(\pi-t)$),
$y\le |y| \le G_{A}(2t) \le \frac{1}{4}$.

If both $|x|\le \frac{1}{4}$ and $y\le \frac{1}{4}$, then $y\le (2|x|-1)^{2}$, as $x\mapsto (2x-1)^{2}$
is decreasing on $[0,\frac{1}{2}]$. Hence we are left with taking care of the range
$t\in [\frac{\pi}{A},\frac{2\pi}{A}]$, where we still have $y\le \frac{1}{4}$, and we may assume $x>\frac{1}{4}$.
Moreover, if $t\in [\frac{3\pi}{2A},\frac{2\pi}{A}]$, $2At\in
[3\pi,4\pi]$, so that $y=G_{A}(2t) \le 0$, hence
it is enough  to prove the statement for $t\in
[\frac{\pi}{A},\frac{3\pi}{2A}]$.

Now, recall that by Lemma \ref{lem:sinc-decreasing} the function $t\mapsto \frac{\sin{t}}{t}$ is decreasing on $[0,\pi]$, so that,
bearing in mind that $A\ge 4$,
\begin{equation*}
\frac{\sin{t}}{t}\ge \frac{\sin(At/4)}{At/4},
\end{equation*}
and thus
\begin{equation}
\label{eq:x<=x'}
\begin{split}
|x|&=|G_{A}(t)| = \frac{|\sin(At)|/(At)}{|\sin(t)|/t} \le
\frac{|\sin(At)|/(At)}{\sin(At/4)/(At/4)} \\&= \frac{|\sin(At)|}{4\sin(At/4)} = |G_{4}(s)| =:|x'|,
\end{split}
\end{equation}
where we rescale by letting $s = \frac{At}{4} \in
[\frac{\pi}{4},\frac{3\pi}{8}]$.
Arguing along the same lines we obtain
\begin{equation}
\label{eq:y<=y'}
|y|=|G_{A}(2t)|\le |G_{4}(2s)| =: |y'|
\end{equation}
(note that $2At/4=At/2 <\pi$, so that Lemma \ref{lem:sinc-decreasing} is valid in this range).

Since $$G_{4}(s) = \frac{\sin(4s)}{4\sin(s)} = \cos(s)\cos(2s)=G_{2}(s)\cdot G_{2}(2s),$$
we have that
\begin{equation*}
(x',y')=(G_{4}(s),G_{4}(2s)) = (G_{2}(s),G_{2}(2s))\cdot (G_{2}(2s),G_{2}(4s)),
\end{equation*}
is a product of two attainable points, and moreover, since $s\in
[\frac{\pi}{4},\frac{3\pi}{8}]$,
$G_{2}(2s) =\cos(2s) \le 0$ (and also $G_{2}(4s)\le 4$). That means that $(x',y')$ is
``mixed sign attainable'' (cf. Definition \ref{def:tot pos not}), and hence Lemma \ref{lem:not tot pos under max curve}
implies that $y'\le (2|x'|-1)^2$. Finally, bearing in mind \eqref{eq:x<=x'} and \eqref{eq:y<=y'},
as well as $x\mapsto (2x-1)^2$ decreasing on $[0,\frac{1}{2}]$, we have
\begin{equation*}
y\le |y'| \le (2x'-1)^{2} \le (2x-1)^{2}.
\end{equation*}

\end{proof}

\section{Proof of auxiliary technical lemmas}

\label{apx:GA curv convex}

\begin{proof}[Proof of Lemma \ref{lem:GA curv convex}]

  First, by using some simple trigonometric identities (in particular,
  that $\sin(\pi/2-t) = \cos( t)$), we may re-parametrize
  $\eta_{A}(t)$ as
$$
\eta_{A}(t) = (x(t),y(t))=\left( \log
  \left(A\frac{\cos(At)}{A\cos(t)}\right), \
  \log\left(\frac{\cos(At)}{\cos(t)}\cdot
\frac{\sin(At)}{A\sin(t)}\right)  \right)
$$
\begin{multline*}
=\bigg( \log \cos(At)-\log(\cos(t)), \\
\log(\cos(At))-\log(\cos(t)) +\log(\sin(At))-\log(A\sin(t)  \bigg),
\end{multline*}
for $t\in [0,\frac{\pi}{2A}]$. By taking the derivatives, it is easy
to see that both $x(t)$ and $y(t)$
are strictly decreasing, thus, by the implicit function theorem,
the curve $(x(t),y(t))$ can be re-parametrized as $(x,h_{A}(x))$ with
$h_{A}:(-\infty,0]\rightarrow \R$ analytic and strictly increasing.
Hence to prove that $\eta_{A}$ is convex (or equivalently, that
$h_{A}$ is convex),
it is sufficient to show that the slope
$$\frac{dy}{dx}=\frac{y'(t)}{x'(t)} = 1+\frac{(\log(\sin(At))-\log(A\sin{t}))'}
{(\log(\cos(At))-\log(\cos{t}))'}$$ is decreasing on
$(0,\frac{\pi}{2A})$, which in turn is equivalent to the
function $$t\mapsto \frac{(\log(\sin(At))-\log(\sin{t}))'}
{(\log(\cos(At))-\log(\cos{t}))'}$$ being decreasing on the same
domain. We rescale by setting $s=At$ and let $\alpha:=\frac{1}{A}\in
(0,\frac{1}{3}]$, $g(s):=-\log(\sin(s))$, $f(s):=-\log(\cos(s))$; we
are then to prove that
\begin{equation*}
s\mapsto \frac{(g(s)-g(\alpha s))'}{(f(s)-f(\alpha s))'}
\end{equation*}
is decreasing on $(0,\frac{\pi}{2})$.

Recall the product expansion formulas
\begin{equation*}
\sin(x) = x\prod\limits_{k=1}^{\infty}
\left(1-\frac{x^{2}}{k^{2}\pi^{2}} \right),
\quad
\cos(x) = \prod\limits_{k=1}^{\infty} \left(1-\frac{4x^{2}}{(2k-1)^{2}\pi^{2}} \right)
\end{equation*}
of the sine and cosine respectively, and the Taylor series expansion
$-\log(1-x)=\sum\limits_{k=1}^{\infty}\frac{x^{k}}{k}.$ With the notation as above we then have
\begin{equation*}
f(s) = \sum\limits_{i=1}^{\infty} a_{i}s^{2i},
\quad \quad
g(s)+\log(s) = \sum\limits_{j=1}^{\infty} b_{j}s^{2j},
\end{equation*}
with
\begin{equation*}
a_{i} = \frac{2^{2i}\zeta^{*}(2i)}{i\pi^{2i}}>0  ; \quad \; b_{j}=\frac{\zeta(2j)}{j\pi^{2j}}>0,
\end{equation*}
where $\zeta$ is the usual Riemann Zeta function, and
$\zeta^{*}(s):= \sum_{k=1}^{\infty}\frac{1}{(2k-1)^{s}},$ for
$ s>1$.

We then have
\begin{equation*}
F(s):= f(s)-f(\alpha s) = \sum\limits_{i=1}^{\infty}a_{i}(1-\alpha^{2i})s^{2i},
\end{equation*}
and
\begin{equation*}
\begin{split}
G(s)&:= g(s)+\log(s)-(g(\alpha s)+\log(\alpha s))
=g(s)-g(\alpha s)-\log(\alpha) \\&= \sum\limits_{j=1}^{\infty}b_{j}(1-\alpha^{2j})s^{2j}-\log(\alpha),
\end{split}
\end{equation*}
and we need to prove that $$G''(s)F'(s)-G'(s)F''(s)<0$$ on $s\in (0,\frac{\pi}{2})$; note that the latter is defined
and analytic on the  interval $(0,\frac{\pi}{2})$.
Now, we have
\begin{equation*}
\begin{split}
G''(s)F'(s) &= \sum\limits_{j=1}^{\infty}b_{j}\cdot 2j(2j-1)(1-\alpha^{2j})s^{2j-2}\cdot
\sum\limits_{i=1}^{\infty}a_{i}\cdot
2i(1-\alpha^{2i})s^{2i-1}\\&=4a_{1}b_{1}s+\sum\limits_{k=1}^{\infty}c_{k}s^{2k+1},
\end{split}
\end{equation*}
and
\begin{equation*}
\begin{split}
G'(s)F''(s) &= \sum\limits_{j=1}^{\infty}b_{j}\cdot 2j(1-\alpha^{2j})s^{2j-1}\cdot
\sum\limits_{i=1}^{\infty}a_{i}\cdot
2i(2i-1)(1-\alpha^{2i})s^{2i-2}\\
&=4a_{1}b_{1}s+\sum\limits_{k=1}^{\infty}d_{k}s^{2k+1},
\end{split}
\end{equation*}
and similarly
\begin{equation*}
g''(s)f'(s)=\frac{1}{3}s+\sum\limits_{k=2}^{\infty}\gamma_{k}s^{2k+1}
\end{equation*}
and
\begin{equation*}
g'(s)f''(s)=\frac{1}{3}s+\sum\limits_{k=2}^{\infty}\delta_{k}s^{2k+1},
\end{equation*}
where for $k\ge 2$ we have $0<c_{k}<\gamma_{k}$, and (since $a_{i},
b_{j} \geq 0$ together with $\alpha \leq 1/3$)
$$d_{k}\ge (1-\alpha^{2})(1-\alpha^{4}) \delta_{k}>\frac{3}{4}\delta_{k}>0.$$

Hence
\begin{equation}
\label{eq:G''F'g''f'}
G''(s)F'(s)-4a_{1}b_{1}s < g''(s)f'(s)-\frac{1}{3}s
\end{equation}
and
\begin{equation}
\label{eq:G'F''g'f''}
G'(s)F''(s)-4a_{1}b_{1}s > \frac{3}{4}\left(g'(s)f''(s)-\frac{1}{3}s\right).
\end{equation}
In a moment we are going to show that the inequality
\begin{equation}
\label{eq:g''f'-g'f''>2}
\frac{g'(s)f''(s)-\frac{1}{3}s}{g''(s)f'(s)-\frac{1}{3}s} \ge 2
\end{equation}
holds for $s\in\frac{\pi}{2}$. Assuming \eqref{eq:g''f'-g'f''>2}, use \eqref{eq:G''F'g''f'}
and \eqref{eq:G'F''g'f''} to finally obtain (note that $\gamma_{k} >
0$ for all $k$)
\begin{equation*}
\begin{split}
G''(s)F'(s) - G'(s)F''(s) &< \left(g''(s)f'(s)-\frac{1}{3}s\right) -
\frac{3}{4}\left(g'(s)f''(s)-\frac{1}{3}s\right)\\&
< -\frac{1}{2}\left(g''(s)f'(s)-\frac{1}{3}s\right)<0.
\end{split}
\end{equation*}

To see \eqref{eq:g''f'-g'f''>2} we note that the involved ratio equals to precisely $2$
at $s=0$, and claim that $$  \frac{d}{ds}K(s):=\frac{d}{ds}\left[\frac{g'(s)f''(s)-\frac{1}{3}s}{g''(s)f'(s)-\frac{1}{3}s} \right]>0$$
for $s\in [0,\frac{\pi}{2}]$. The latter derivative equals
\begin{equation*}
\frac{d}{ds}K(s)= -\frac{q(s)}{\cos(s)^{2}\left( s^{2}-\sin(s)^{2}  \right)^{2}},
\end{equation*}
where $q(s)$ is given by
\begin{equation}
\label{eq:boring h def}
\begin{split}
q(s) &= 2\cos(s)^{3}\sin(s)s^2-\cos(s)^3\sin(s)+\cos(s)\sin(s)s^2\\&-
4\cos(s)^2\sin(s)^2s-s^3+\sin(s)\cos(s)+s\sin(s)^2.
\end{split}
\end{equation}
Thereupon the inequality \eqref{eq:g''f'-g'f''>2} finally follows from
Lemma \ref{lem:h(s) tedious <=0} below.
\end{proof}

\begin{lem}
\label{lem:h(s) tedious <=0}
The function $q(s)$, defined by \eqref{eq:boring h def}, satisfies
$q(s)\le 0$ on $s\in \left[0,\frac{\pi}{2}\right]$.
\end{lem}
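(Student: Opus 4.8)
The plan is to replace $q$ by a transparent closed form, peel off a sharp Cauchy--Schwarz estimate, and thereby reduce the claim to one auxiliary inequality that a few Taylor terms dispose of. First I would rewrite $q$: writing $S=\sin s$, $C=\cos s$, a direct expansion shows
\[
s^{2}(2C^{2}+1)+S^{2}-4sSC \;=\; 2(sC-S)^{2}+(s^{2}-S^{2}),
\]
and grouping the terms of $q$ accordingly gives the identity
\[
q(s) \;=\; 2\sin s\cos s\,(s\cos s-\sin s)^{2} \;+\; (s^{2}-\sin^{2}s)(\sin s\cos s - s).
\]
On $[0,\pi/2]$ the first summand is $\ge 0$ (both factors non-negative) while the second is $\le 0$, since $\sin s\le s$ gives $s^{2}-\sin^{2}s\ge 0$ and $\sin s\cos s=\tfrac12\sin 2s<s$ for $s>0$; so the task is to show the negative term dominates.

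Next I would bound the positive term. Integration by parts gives $\sin s - s\cos s=\int_{0}^{s} t\sin t\,dt$, so Cauchy--Schwarz on $[0,s]$ yields
\[
(\sin s - s\cos s)^{2} \;\le\; \Big(\int_{0}^{s} t^{2}\,dt\Big)\Big(\int_{0}^{s}\sin^{2}t\,dt\Big) \;=\; \frac{s^{3}}{3}\cdot\frac{s-\sin s\cos s}{2}.
\]
Multiplying by $2\sin s\cos s\ge 0$ and inserting into the identity above gives, for $s\in(0,\pi/2]$,
\[
q(s) \;\le\; (s-\sin s\cos s)\Big(\tfrac{1}{3}s^{3}\sin s\cos s-(s^{2}-\sin^{2}s)\Big) \;=\; -\tfrac13(s-\sin s\cos s)\,v(s),
\]
where $v(s):=3(s^{2}-\sin^{2}s)-s^{3}\sin s\cos s$. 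Since $s-\sin s\cos s>0$ on $(0,\pi/2]$ and $q(0)=0$, everything reduces to proving $v(s)\ge 0$ on $[0,\pi/2]$.

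For $v\ge 0$: since $2s\in[0,\pi]$ and $(2s)^{2}<12$, the standard alternating-series comparisons give, on $[0,\pi/2]$,
\[
\sin^{2}s \;\le\; s^{2}-\frac{s^{4}}{3}+\frac{2s^{6}}{45}, \qquad s^{3}\sin s\cos s=\tfrac12 s^{3}\sin 2s \;\le\; s^{4}-\frac{2s^{6}}{3}+\frac{2s^{8}}{15}
\]
(the second using $\sin x\le x-\tfrac{x^{3}}{6}+\tfrac{x^{5}}{120}$ for $x\in[0,\pi]$). Subtracting,
\[
v(s) \;\ge\; 3\Big(\frac{s^{4}}{3}-\frac{2s^{6}}{45}\Big)-\Big(s^{4}-\frac{2s^{6}}{3}+\frac{2s^{8}}{15}\Big) \;=\; \frac{2s^{6}}{15}(4-s^{2}) \;\ge\; 0,
\]
because $s^{2}\le\pi^{2}/4<4$. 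Together with the previous step this gives $q(s)\le 0$ on $(0,\pi/2]$, and $q(0)=0$ closes the argument.

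The delicate point — and the reason one cannot simply feed crude bounds into the original expression for $q$ — is that $q$ vanishes to order $9$ at the origin (indeed $q(s)=-\tfrac{16}{135}s^{9}+O(s^{11})$), so any estimate that is not essentially sharp near $0$ destroys the inequality. What rescues the argument is that the rewriting of $q$ is an exact identity and the Cauchy--Schwarz step is sharp to leading order, so the residual inequality $v\ge 0$ concerns a function vanishing only to order $6$, which a handful of Taylor terms controls with room to spare. The one genuinely non-mechanical move is spotting the grouping $2(sC-S)^{2}+(s^{2}-S^{2})$ that produces the clean identity for $q$.
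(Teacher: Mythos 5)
Your proof is correct and takes a genuinely different, cleaner route than the paper's. The paper Taylor-expands $q$ about $s=0$, obtains $q(s)=\sum_{k\ge 4}d_k s^{2k+1}$ with alternating-sign coefficients, and shows by hand that the degree-$\le 15$ truncation $q_0$ is nonpositive on $[0,\pi/2]$ while the higher-order coefficients can be paired off, using a rather delicate quantitative asymptotic for $d_k$ (the $(1+\tfrac58\theta)$ bound). You instead find the exact algebraic identity
\begin{equation*}
q(s) \;=\; 2\sin s\cos s\,(s\cos s-\sin s)^{2} \;+\; (s^{2}-\sin^{2}s)(\sin s\cos s - s),
\end{equation*}
which separates $q$ into a nonnegative and a nonpositive piece, then bound the nonnegative piece via $s\cos s-\sin s=-\int_0^s t\sin t\,dt$ and Cauchy--Schwarz, reducing everything to the inequality $v(s):=3(s^2-\sin^2 s)-s^3\sin s\cos s\ge0$, which a short alternating-series estimate settles. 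Both proofs handle the order-$9$ vanishing of $q$ at the origin as the essential difficulty; the paper does it by tracking Taylor coefficients numerically, while your Cauchy--Schwarz step is sharp at leading order, which is why the residual inequality only concerns a function vanishing to order $6$ and needs just a few Taylor terms. I verified the decomposition of $q$, the Cauchy--Schwarz bound ($\int_0^s t^2\,dt=\tfrac{s^3}{3}$, $\int_0^s\sin^2 t\,dt=\tfrac{s-\sin s\cos s}{2}$), the factoring out of $(s-\sin s\cos s)\ge0$, and the final Taylor comparison $v(s)\ge\tfrac{2s^6}{15}(4-s^2)\ge0$; all steps are sound. Your argument is shorter, self-contained, and arguably a better proof for the paper than the one it currently contains.
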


\begin{proof}
  We remark that the lemma is evident from plotting $q(s)$
  numerically, but a formal argument can be given along the following
  lines.  We Taylor expand $q$
  around $s=0$ (we caution the reader that $d_{k}$ is not the same as
  in the proof of the previous Lemma):
\begin{equation}
\label{eq:h(s) Taylor}
q(s) = \sum\limits_{k=4}^{\infty}d_{k}s^{2k+1},
\end{equation}
where
\begin{equation*}
d_{k}=(-1)^{k+1} \left(\frac{2^{2k-1}+2^{4k-4}}{(2k-1)!}+\frac{2^{2k-1}-2^{4k-1}}{(2k)!} +\frac{2^{4k-1}-2^{2k-1}}{(2k+1)!}\right);
\end{equation*}
in particular $d_{4}=-\frac{16}{135}$, $d_{5}=\frac{16}{315}$, $d_{6}=-\frac{16}{1575}$, $d_{7}=\frac{2944}{2338875}$.
The general formula clearly implies that as $k\rightarrow \infty$,
$d_{k}\sim (-1)^{k} \frac{2^{4k-4}}{(2k-1)!}, $ and moreover, a  crude
estimate shows that
$$d_{k} = (-1)^{k} \frac{2^{4k-4}}{(2k-1)!}\left(1+\theta\left(\frac{1}{2^{2k-3}}+\frac{4}{k}+\frac{1}{k\cdot 2^{2k-2}}+
    \frac{4}{k^{2}} \right) \right),$$ where\footnote{In writing this
  way we follow Vinogradov: the exact value of $\theta$ might
  change, but the inequality $|\theta| \leq 1$ always holds.} $|\theta
|\le 1$. For
$k\ge 8$ we then
have
\begin{equation}
\label{eq:dk effective asymptotics}
d_{k} = (-1)^{k} \frac{2^{4k-4}}{(2k-1)!}\left(1+\frac{5}{8}\theta\right);
\end{equation}
it is evident that the signs of $d_{k}$ are alternating.

Now separate the summands of \eqref{eq:h(s) Taylor} corresponding to $k\le 7$ from the rest;
the remaining summands are united into pairs, i.e. write
\begin{equation}
\label{eq:h taylor unite}
q(s) = s^{9}q_{0}(s)+\sum\limits_{r=4}^{\infty}\left(d_{4r+1}s^{4r+1}+d_{4r+3}s^{4r+3}\right),
\end{equation}
where $$q_{0}(s)=\sum\limits_{k=4}^{7}d_{k}s^{2k+1} = -\frac{16}{135}+
\frac{16}{315}s^{2}-\frac{16}{1575}s^{4}+\frac{2944}{2338875}s^{6},$$
using the explicit Taylor coefficients mentioned above. First, it is
tedious but straightforward
to see that $q_{0}(s) \le 0$ on $s\in
[0,\frac{\pi}{2}]$.

For the remaining terms, note
that by the above, for $r\ge 4$ we have $d_{4r+1}<0$ and $d_{4r+3}>0$, and
upon employing \eqref{eq:dk effective asymptotics} twice, we
obtain (note that since $r \geq 4$ we have $(4r+2) \geq 18 > 2^{4}$
and thus $(8r+5) \cdots (4r+2) > 2^{16r}$)
\begin{equation*}
\begin{split}
|d_{4r+3}| &< \frac{13}{8}\frac{2^{8r}}{(4r+1)!} < \frac{13}{8}\cdot 16 \cdot
\frac{1}{(4r)^{2}}\frac{2^{8r-4}}{(4r-1)!} \\&\le \frac{13}{8} \cdot \frac{1}{r^{2}}\cdot \frac{8}{3} |d_{4r+1}|
= \frac{13}{3r^{2}}|d_{4r+1}|< 0.3|d_{4r+1}|.
\end{split}
\end{equation*}
Hence each of the summands in \eqref{eq:h taylor unite}, for
$s\in [0,\frac{\pi}{2}]$, satisfies:
\begin{equation*}
d_{4r+1}s^{4r+1}+d_{4r+3}s^{4r+3}< d_{4r+1}s^{4r+1}+0.3 \left(\frac{\pi}{2}\right)^{2}|d_{4r+1}|s^{4r+1} < 0,
\end{equation*}
as $0.3 \left(\frac{\pi}{2}\right)^{2} < 1.$ Finally $q(s)<0$, since
all the summands in \eqref{eq:h taylor unite}
are negative.

\end{proof}

\begin{proof}[Proof of Lemma \ref{lem:GA analyt par}]

By Lemma~\ref{lem:GA curv convex}
(note that the proof of Lemma~\ref{lem:GA curv convex} does not use
Lemma~\ref{lem:GA analyt par})
we may re-parametrize
$\eta_{A}$ as
$(x,h_{A}(x))$ on $x\in (-\infty,0]$.
Since both components $\eta_{A;1}$ and $\eta_{A;2}$ are strictly decreasing, it follows
that $h_{A}'(x)>0$ everywhere, and $h_{A}'(x)\le \frac{4}{3}$ follows from the convexity of
$h_{A}$, and the explicit computation $h_{A}'(0)=\frac{4}{3}$.

\end{proof}

\begin{proof}[Proof of Corollary \ref{cor:y^4/3>=1/Ax}]

By the multiplicativity, it is sufficient to prove the statement for a single $A_{i}$, i.e.
that if $$(x,y)=(G_{A}(t),G_{A}(2t))$$ with $A$ odd and $t\in [\frac{\pi}{2}-\frac{\pi}{2A},\frac{\pi}{2}]$,
then $$y\ge (Ax)^{4/3}.$$ As we may assume with no loss of generality
that $x>0$ (note that $y>0$ by the assumption of $t_{i}$ being near $\pi/2$)
the latter is equivalent to
\begin{equation}
\label{eq:logy>=4/3log(Ax)}
\log y \ge \frac{4}{3} \log(Ax).
\end{equation}
Note that, with $\eta_{A}$ defined as in Lemma \ref{lem:GA analyt par},
$\eta_{A}(t)= (z,h_{A}(z))=(\log(Ax),\log(y))$, with $h_{A}$ analytic convex,
$h_{A}(0)=0$, and a straightforward computation shows that
$h_{A}'(0)=\frac{4}{3}$.
By the convexity of $\eta_{A}$ then the curve
lies above its tangent line at the origin,
i.e. \eqref{eq:logy>=4/3log(Ax)} follows.

\end{proof}

\begin{proof}[Proof of Lemma \ref{lem:convexity log(2exp(2z)-1)}]
The claimed inequality follows from the identity
\begin{equation*}
(2x_{1}^{2}-1)(2x_{2}^{2}-1)-(2(x_{1}x_{2})^{2}-1) = 2(x_{1}^{2}-1)(x_{2}^{2}-1).
\end{equation*}
\end{proof}


\end{document}